\pgfplotsset{compat=1.18}
\newtheorem{theorem}{Theorem}
\newtheorem{proposition}{Proposition}
\newtheorem{corollary}{Corollary}
\newtheorem{remark}{Remark}
\theoremstyle{remark}
\newcommand{\conv}{\textup{conv}}
\newcommand{\clconv}{\textup{cl conv}}
\newcommand{\supp}{\textup{supp}}
\newcommand{\Diag}{\textup{Diag}}
\newcommand{\cl}{\textup{cl}}
\newcommand{\R}{\ensuremath{\mathbb{R}}}
\newcommand{\Cp}[1]{\ensuremath{\mathcal{C}_+^{#1}}}
\newcommand\Cbracket[1]{\left\{#1\right\}}
\newcommand\Cparen[1]{\left(#1\right)}
\newcommand{\innerProd}[2]{\left\langle#1,#2\right\rangle}
\DeclareMathOperator*{\proj}{proj}
\title{Rank-one convexification for quadratic optimization problems with step function penalties}
\author{Soobin Choi\thanks{Department of Industrial and Systems Engineering, University of Southern California, \texttt{soobinch@usc.edu}}, Valentina Cepeda\thanks{Departamento de Ingenier\'ia Industrial, Universidad de los Andes, \texttt{v.cepeda@uniandes.edu.co}}, Andr\'es G\'omez\thanks{Department of Industrial and Systems Engineering, University of Southern California, \texttt{gomezand@usc.edu}} and Shaoning Han\thanks{Department of Mathematics, and Institute of Operations Research and Analytics, National University of Singapore, \texttt{shaoninghan@nus.edu.sg}}}
\date{\today}
\begin{document}
\maketitle

\begin{abstract}
We investigate convexification for convex quadratic optimization with step function penalties. Such problems can be cast as mixed-integer quadratic optimization problems, where binary variables are used to encode the non-convex step function. First, we derive the convex hull for the epigraph of a quadratic function defined by a rank-one matrix and step function penalties. Using this rank-one convexification, we develop copositive and semi-definite relaxations for general convex quadratic functions. Leveraging these findings, we construct convex formulations to the support vector machine problem with 0--1 loss and show that they yield robust estimators in settings with anomalies and outliers.
\end{abstract}

\paragraph{Statements and declarations} The authors have no competing interests to declare that are relevant to the content of this article.

\section{Introduction}
Given vectors $\bm{a},\bm{c}\in \R^n$ and positive semi-definite matrix $\bm{Q}\in \R^{n\times n}$, we consider mixed-integer quadratic optimization problems of the form 
\begin{subequations}\label{prob:two-sided}
    \begin{align}
        \min_{\bm{x}\in \R^n,\bm{z}\in \{0,1\}^n}\;      & \bm{x}^\top \bm{Q}\bm{x} + \bm{a}^\top \bm{x} + \bm{c}^\top \bm{z} \label{eq:two-sided_obj}\\
        \text{s.t.}\; & x_iz_i \geq 0,\; x_i(1-z_i)\leq 0    & i=1,\ldots,n,\label{eq:two-sided_sign}\\
            &(\bm{x},\bm{z})\in \mathcal{F}\subseteq \mathbb{R}^n\times \{0,1\}^n.\label{eq:two-sided_vars}
    \end{align}
\end{subequations}
Constraints \eqref{eq:two-sided_sign}, along with binary constraints $\bm{z}\in \{0,1\}^n$, enforce the logical conditions $z_i=1\implies x_i\geq 0$ and $z_i=0\implies x_i\leq 0$ \textemdash we refer to such constraints as \emph{sign indicator constraints}. Observe that constraints \eqref{eq:two-sided_sign} can be linearized as
\begin{equation}\label{eq:mixing}
x_i+M(1-z_i)\geq 0,\; x_i-Mz_i\leq 0,\quad i=1,\dots,n,
\end{equation}
where $M$ is a sufficiently large number. 
If $\mathcal{F}=\mathbb{R}^n\times \{0,1\}^n$, then the optimization problem \eqref{prob:two-sided} is equivalent to imposing non-convex, discontinuous penalties on the continuous variables $\bm{x}$ of the form $h(x_i)=\begin{cases}c_i&\text{if }x_i>0\\0&\text{if }x_i\leq 0\end{cases}$ if $\bm{c}\geq \bm{0}$. One of our main motivations for studying \eqref{prob:two-sided} is to tackle \emph{support vector machines} (SVMs) with the 0-1 loss, a class of problems we discuss at length in \S\ref{sec:applications}. 

In this paper, we study the interplay between these constraints and the nonlinear convex quadratic term in the objective \eqref{eq:two-sided_obj} involving the continuous variables to derive strong convex relaxations. 
Specifically, we focus our attention on two related mixed-integer sets associated with the problem \eqref{prob:two-sided}.
The first one is the epigraph of the mixed-integer quadratic form, defined as 
\begin{equation*}
    \mathcal{X}_{\bm{Q}}:=\{(\bm{x},\bm{z},t)\in\mathbb{R}^n\times\{0,1\}^n\times\mathbb{R}:t\geq \bm{x}^\top \bm{Q}\bm{x},~x_iz_i\geq 0,~x_i(1-z_i)\leq 0, ~i=1,\ldots,n\},
\end{equation*}
and its ``one-sided" relaxation 
\begin{align}\label{eq:oneSidedFirst}
    \mathcal{X}_{\bm{Q}}^-:=\{(\bm{x},\bm{z},t)\in\mathbb{R}^n\times\{0,1\}^n\times\mathbb{R}:t\geq \bm{x}^\top \bm{Q}\bm{x},~x_i(1-z_i)\leq 0, ~i=1,\ldots,n\}.
\end{align}
Indeed, a stronger relaxation of \eqref{prob:two-sided} than the one obtained from the big-M constraints \eqref{eq:mixing} is
\begin{align}\label{eq:formulationHull1}
        \min_{(\bm{x},\bm{z},t)\in \R^{2n+1}}\; t + \bm{a}^\top \bm{x} + \bm{c}^\top \bm{z}\;\;
        \text{s.t.}\;  (\bm{x},\bm{z},t)\in \text{cl conv}(\mathcal{X}_{\bm{Q}}),\;(\bm{x},\bm{z})\in \clconv(\mathcal{F}),
    \end{align}
    and this relaxation is exact for all $\bm{a},\bm{c}\in \R^n$ if $\mathcal{F}=\R^n\times \{0,1\}^n$. Naturally, since problem \eqref{prob:two-sided} includes several NP-hard problems as special cases even if $\mathcal{F}=\R^n\times \{0,1\}^n$, it may not be possible to obtain tractable descriptions of $\text{cl conv}(\mathcal{X}_{\bm{Q}})$ unless $\bm{Q}$ has a special structure or $n$ is fixed. 

The second, more general set we study is
\begin{align}\label{eq:defBarX}\mathcal{\bar X}:=\big\{(\bm{x},\bm{X},\bm{z})\in\mathbb{R}^n\times\R^{n\times n}\times\{0,1\}^n:&\;\bm{X}\succeq \bm{xx^\top},
 \;x_iz_i\geq 0,~x_i(1-z_i)\leq 0,\;i=1,\dots,n\big\}.\end{align}
 Indeed a relaxation of \eqref{prob:two-sided} using $\mathcal{\bar X}$ is 
 \begin{align*}
        \min_{(\bm{x},\bm{X},\bm{z})\in \R^{n^2+2n}}\; \langle \bm{Q},\bm{X}\rangle + \bm{a}^\top \bm{x} + \bm{c}^\top \bm{z}\;\;
        \text{s.t.}\;  (\bm{x},\bm{X},\bm{z})\in \text{cl conv}(\mathcal{\bar X}),\;(\bm{x},\bm{z})\in \clconv(\mathcal{F}),
    \end{align*}
    where $\langle \bm{Q},\bm{X}\rangle =\sum_{i=1}^n\sum_{j=1}^n Q_{ij}X_{ij}$ is the usual inner product between matrices, and this relaxation is exact for all $\bm{a},\bm{c}\in \R^n$ and positive semi-definite matrices $\bm{Q}\succeq \bm{0}$ if $\mathcal{F}=\R^n\times \{0,1\}^n$. Note that since the relaxation $\text{cl conv}(\mathcal{\bar X})$ guarantees optimality for \emph{all} positive semidefinite matrices, whereas the relaxation in \eqref{eq:formulationHull1} is valid for a specific matrix only, the set $\text{cl conv}(\mathcal{\bar X})$ is substantially more general and harder to describe. Set $\mathcal{\bar X}$ can also be used to jointly convexify several quadratic constraints, since 
    \begin{align*}
    &\bm{x^\top Q_j x}+\bm{a_j^\top x}+\bm{c_j^\top z}\leq b_j,\; \forall j\in \{1,\dots,m\}\text{ and }\eqref{eq:two-sided_sign}\\
    \implies&\exists \bm{X}\text{ such that } (\bm{x},\bm{X},\bm{z})\in \clconv(\bar{\mathcal{X}})\text{ and } \langle\bm{Q_j},\bm{X}\rangle+\bm{a_j^\top x}+\bm{c_j^\top z}\leq b_j,\; \forall j\in \{1,\dots,m\}.
    \end{align*}

\paragraph{Contributions and outline}
In this paper, we provide an explicit description of the closure of the convex hull of $\mathcal{X}_{\bm{Q}}$ when $\bm{Q}$ is rank-one. Leveraging our insights from the rank-one case, we also develop strong convex (copositive- and SDP-representable) relaxations for set $\mathcal{\bar X}$. We show how to effectively implement the proposed relaxations in the special case of SVM problems, and discuss how the proposed relaxations compare and improve upon existing approaches in the statistical/machine learning literature to solve non-convex SVMs.

The rest of the paper is organized as follows. In \S\ref{sec:applications} we discuss support vector machine learning problems. In \S\ref{sec:litReview} we review the relevant literature for the paper. In \S\ref{sec:hull} we derive an ideal description of $\cl\;\conv(\mathcal{X}_{\bm{Q}})$ when $\bm{Q}$ is rank-one, and in \S\ref{sec:validExtended} we propose valid inequalities for $\cl\;\conv(\bar{ \mathcal{X}})$. In \S\ref{sec:SVM} we discuss how to efficiently implement the derived inequalities for support vector machine problems. Finally, we present the results of our computational experiments in \S\ref{sec:computations} and conclude the paper in \S\ref{sec:conclusions}.

\paragraph{Notation} Given $n\in \mathbb{Z}_+$, let $[n]=\{1,\ldots,n\}$, and let $\bm{0}_n$ and $\bm{1}_n$ denote an $n$-dimensional vector of zeros and ones, respectively; we omit the subscript $n$ if the dimension is clear from the context. Similarly, we let $\bm{0}_{n\times n}$ and $\bm{I}_{n\times n}$ denote $(n\times n)$-dimensional matrices of zeros and the identity respectively, and omit the subscripts when clear from the context. For a vector $\bm{v}\in\mathbb{R}^n$, define $\supp(\bm{v})=\left\{i\in [n]: v_i\neq 0\right\}$ as the support of $\bm{v}$, and define $\supp_+(\bm{v})=\left\{i\in [n]: v_i> 0\right\}$ and $\supp_-(\bm{v})=\left\{i\in [n]: v_i< 0\right\}$ as the set of indexes with positive and negative signs, respectively. Moreover, given $S\subseteq [n]$ and $v\in \R^n$, we denote by $\bm{v_S}\in \R^S$ the subvector induced by the support $S$. Given a square matrix $\bm V\in \R^{n\times n}$ and $S\subseteq [n]$, we let $\bm{V_{S}}\in \R^{S\times S}$ denote the submatrix of $\bm{V}$ induced by the rows and columns of $S$. Moreover, for a (not-necessarily square) matrix $\bm{V}\in \R^{n\times m}$ and $S\subseteq [n]$, we let $\bm{V_{S,m}}\in \R^{S\times m}$ be the submatrix of $\bm{V}$ induced by the rows of $S$.

We denote by $\mathcal{S}_+^n$ and $\Cp{n}$ the cones of $n$-dimensional symmetric positive semi-definite and copositive matrices, that is, 
\begin{align*}
  \mathcal{S}_+^n&=\{\bm{X}\in\R^{n\times n}: \bm{X}=\bm{X^\top},\; \bm{v}^\top \bm{X} \bm{v}\geq 0 \text{ for all } \bm{v}\in\mathbb{R}^n\},\text{ and }\\
  \Cp{n}&=\{\bm{X}\in\R^{n\times n}:\bm{X}=\bm{X^\top},\; \bm{v}^\top \bm{X} \bm{v}\geq 0 \text{ for all } \bm{v}\in \R_+^n\}.
\end{align*} 
For simplicity, given a matrix $\bm{A}\in \R^{n\times n}$, we sometimes use the notation $\bm{A}\succeq \bm{0}$ to denote $\bm{A}\in \mathcal{S}_+^n$.

The inner product of two matrices is denoted by $\langle\bm{A},\bm{B}\rangle=\sum_{i,j=1}^n A_{ij}B_{ij}$. Given a vector $\bm{v}\in\mathbb{R}^{n}$, $\Diag(\bm{v})\in\mathbb{R}^{n\times n}$ denotes the diagonal matrix with its $i$th diagonal entry $v_i$ for $i=1,\ldots,n$. 
We define the two operators $(c)_+=\max\{c,0\}$ and $(c)_-=\min\{c,0\}$ for $c\in\mathbb{R}$. For a set $\mathcal{X}\subseteq \mathbb{R}^n$, we denote its convex hull by $\conv(\mathcal{X})$ and the closure of the convex hull by $\clconv(\mathcal{X})$. We define $0/0=0$ and $0\cdot\infty=0$ by convention. We let $\mathbbm{1}_{\{\cdot\}}$ denote the indicator function that is equal to one whenever the condition holds and is zero otherwise; for example, $\mathbbm{1}_{\{x> 0\}}=1$ if $x> 0$ and is $0$ if $x\leq 0$. 

\section{Robust support vector machines}\label{sec:applications}

In this section, we discuss a specific application of \eqref{prob:two-sided}, which is the main focus of the computational experiments in this paper. 
Given labeled data $\{(\bm{a_i},y_i)\}_{i=1}^n$, where $\bm{a_i}\in \R^p$ encodes the features of point $i$ and $y_i\in \{-1,1\}$ denotes its class, consider the support vector machine (SVM) problem defined in the seminal paper by 
\citet{cortes1995support}, given by
\begin{subequations}\label{eq:SVM}
	\begin{align}
		\min_{\bm{w}\in \R^p,\bm{\xi}\in \R_+^n}\;&\|\bm{w}\|_2^2+\lambda\sum_{i=1}^n \xi_i^\eta\\
		\text{s.t.}\:&y_i\left(\bm{a_i^\top w}\right)\geq 1-\xi_i\quad \forall i\in \{1,\dots,n\}
	\end{align}
	
\end{subequations}
\noindent for some regularization parameter $\lambda>0$ and some sufficiently small constant $\eta\in \R_+$. \citet{cortes1995support} argue that if $\lambda$ is sufficiently large and $\eta$ is sufficiently small, then an optimal solution of \eqref{eq:SVM} yields a hyperplane that misclassifies the least amount of points possible and has maximum margin among all hyperplanes with minimal misclassification. The extreme case where $\eta=0$ leads to the SVM problem with 0--1 misclassification loss given by 
\begin{align}\label{eq:SVM01}
\min_{\bm{w}\in \R^p}\;\|\bm{w}\|_2^2+\lambda\sum_{i=1}^n \mathbbm{1}_{\{y_i\bm{a_i^\top w}< 1\}}.
\end{align}
However, noting that \eqref{eq:SVM} is NP-hard if $\eta<1$, the authors instead suggest using $\eta=1$, resulting in the popular SVM with the hinge loss
\begin{align}\label{eq:hinge}
	\min_{\bm{w}\in \R^p}\;&\|\bm{w}\|_2^2+\lambda\sum_{i=1}^n \max\left\{0,1-y_i\left(\bm{a_i^\top w}\right)\right\}.
\end{align}
Setting $\eta>1$ leads to other convex surrogates of SVM problems that have also been studied in the literature \cite{chang2008coordinate,lee2001ssvm}.

Typical convex surrogates of \eqref{eq:SVM01} such as the hinge loss problem \eqref{eq:hinge} have been shown to perform well in low noise settings \citep{bartlett2006convexity}. However, these surrogates are known to perform worse than the 0-1 loss in the presence of uncertainty or outliers \citep{manwani2013noise,ghosh2015making}.
In particular, label noise, characterized by label flips in data, affects the performance of classification problems. The standard hinge loss SVM algorithm \eqref{eq:hinge} tends to penalize outliers more and, as a result, these points strongly influence the placement of the separating hyperplane, leading to low classification performance. To improve SVM robustness to label uncertainty, various approaches have been proposed in the literature. \citet{song2002robust} suggested evaluating an adaptive margin based on the distance of each point to its class center to mitigate the influence of points far from class centroids. 
Other studies have focused on replacing the hinge loss function with non-convex alternatives. Some popular choices are the ramp loss \citep{wu2007robust}, the rescaled hinge loss \citep{xu2017robust}, the $\psi$-learning loss \citep{shen2003psi} or simply using the exact 0-1 loss. 
Unfortunately, while improving robustness, these methods may result in prohibitive increases in computational time due to the non-convex nature of the optimization problems. For example, \citet{brooks2011support} proposes to use mixed-integer optimization (MIO) solvers to tackle SVMs with the ramp or 0-1 loss using big-M formulations: the resulting approach, which we review next, does not scale beyond $n\approx 100$. Thus, heuristics or local minimization methods that do not produce global minimizers are typically used in practice~\cite{wang2021support}. 

\subsection{Natural big-M formulation }
 \label{sec:bigM}
To solve SVMs with the exact misclassification loss, \citet{brooks2011support} proposes to introduce binary variables $z_i=1$ if point $i$ lies in the margin or is misclassified, and $z_i=0$ if the point is correctly 
classified. Then, they reformulate \eqref{eq:SVM01} as
\begin{subequations}\label{eq:BigMPenalty}
	\begin{align}
		\min_{\bm{w},\,\bm z}\;&\|\bm{w}\|_2^2+\lambda\sum_{i=1}^n z_i\label{eq:BigMPenalty_obj}\\
		\text{s.t.}\:&y_i\left(\bm{a_i^\top w}\right)\geq 1-Mz_i\quad \forall i\in [n],\label{eq:BigMPenalty_classification}\\
		&\bm{w}\in \R^p,\;\bm{z}\in \{0,1\}^n\label{eq:BigMPenalty_bounds}
	\end{align}
\end{subequations}
where $M$ is a sufficiently large number (see also \cite{jammal2020robust} for a similar formulation). If $z_i=0$, constraint \eqref{eq:BigMPenalty_classification} forces point $i$ to be correctly classified, while setting $z_i=1$ incurs a fixed cost of $\lambda$ but allows point $i$ to be in the margin or in the ``incorrect'' side of the hyperplane. \citet{brooks2011support} also shows that the estimator obtained from solving \eqref{eq:BigMPenalty_obj}-\eqref{eq:BigMPenalty_bounds} to optimality is consistent under appropriate conditions. 

\paragraph{Relaxation quality} A good proxy for the effectiveness of branch-and-bound methods is the relaxation quality of the MIO formulation, obtained by relaxing the binary constraints to bound constraints $\bm{0}\leq \bm{z}\leq \bm{1}$.
Consider the relaxation of \eqref{eq:BigMPenalty}:

	\begin{align}\label{eq:BigMPenaltyRelaxation}
		\min_{\bm{w}\in \R^p,\bm{0}\leq\bm{z}\leq \bm{1}}\;&\|\bm{w}\|_2^2+\lambda\sum_{i=1}^n z_i
\text{ 
 s.t.  }\:y_i\left(\bm{a_i^\top w}\right)\geq 1-Mz_i\quad \forall i\in [n].
	\end{align}
\normalsize
A common approach to assess the relaxation quality is through the $\texttt{gap}:=\left(\zeta_{mio}-\zeta_{relax}\right)/\zeta_{mio}$, where $\zeta_{mio}$ and $\zeta_{relax}$ denote the optimal objective values of \eqref{eq:BigMPenalty} and \eqref{eq:BigMPenaltyRelaxation}, respectively. Unfortunately, \emph{regardless of the data}, \eqref{eq:BigMPenaltyRelaxation} results in almost trivial relaxation gaps. Indeed, note that the solution $\bm{w}=\bm{0}$, $\bm{z}=\bm{1}/M$ is always feasible with an objective value of $(\lambda n)/M\xrightarrow{M\to\infty}0$. Thus, if a large value of $M$ is used, the gap is close to $1$. Additionally, the solutions $(\bm{w},\bm{z})$ obtained from solving the relaxation are uninformative, which will severely impair branch-and-bound algorithms.  \citet{brooks2011support} reports that in the pool of 1,425 instances tested in his paper, while 39\% are solved to optimality, the average gap on the remaining 61\% of instances is over 70\% after 10 minutes of branch-and-bound. Our computations with this formulation using the commercial solver Gurobi, presented in \S\ref{sec:computations}, observed a similar lackluster performance: several instances with $n=100$ cannot be solved in 10 minutes with gaps close to 100\%, and no instance with $n=200$ could be solved to optimality. While off-the-shelf solvers have certainly improved over the past decade in nonlinear MIO problems, these results suggest that improvements have not translated to significantly better methods for problem \eqref{eq:BigMPenalty}. 

\paragraph{Big-M and Hinge loss} The big-M formulation proposed by \citet{brooks2011support} is connected with the popular SVM with hinge loss \eqref{eq:hinge}.
Indeed, note that in optimal solutions of the relaxation \eqref{eq:BigMPenaltyRelaxation} we find that $M z_i^*=\left(1-y_i\left(\bm{a_i^\top w}\right)\right)_+$, and thus (if $M$ is big enough), problem \eqref{eq:BigMPenaltyRelaxation} is equivalent to \eqref{eq:hinge} with $\bar \lambda=\lambda/M$.
In other words, SVMs with the hinge loss \eqref{eq:hinge} can be interpreted as an approximation\footnote{Problem \eqref{eq:hinge} is only a relaxation if the regularization controlling the hinge loss is negligible. Since practical uses of \eqref{eq:BigMPenalty} use much larger parameters, i.e., values of $M$ too small to guarantee the correctness of the MIO formulation, we prefer the term ``approximation'' instead.} of \eqref{eq:BigMPenalty} informed by its natural continuous relaxation.

\paragraph{Cardinality constrained version} Given $k\in \mathbb{Z}_+$, an alternative version of SVMs with 0--1 loss is the cardinality constrained version 
\begin{subequations}\label{eq:BigMCardinality}
	\begin{align}
		\min_{\bm{w},\,\bm z}\;&\|\bm{w}\|_2^2\label{eq:BigMCardinality_obj}\\
		\text{s.t.}\:&y_i\left(\bm{a_i^\top w}\right)\geq 1-Mz_i\quad \forall i\in [n],\label{eq:BigMCardinality_classification}\\
        &\sum_{i=1}^n z_i\leq k\label{eq:BigMCardinality_number}\\
		&\bm{w}\in \R^p,\;\bm{z}\in \{0,1\}^n\label{eq:BigMCardinality_bounds}
	\end{align}
\end{subequations}
where, instead of penalizing missclassified points, constraint \eqref{eq:BigMCardinality_number} restricts the number of missclassifications. The main advantage of using \eqref{eq:BigMCardinality} is when performing hyperparameter selection, e.g., when using cross-validation. Indeed, in \eqref{eq:BigMPenalty}, there are no ``natural" values of $\lambda$ to test. An exhaustive search for an ideal value might be computationally prohibitive, whereas a superficial search might lead to misspecified parameters and lackluster estimators. Instead, for hyperparameter selection in \eqref{eq:BigMCardinality}, it suffices to test $k\in \{0,1,2,\dots,\lfloor n/2\rfloor\}$. Thus, it is possible to select an ideal parameter after a finite number of trials, or have informed a priori guesses for reasonable values for the parameter.

\subsection{Reformulation in an extended formulation}\label{sec:SVMReformExtended}

Observe that problem \eqref{eq:BigMPenalty} can be reformulated with the introduction of additional variables $\bm{x}\in \R^n$, representing the predictions for each datapoint, as 
\begin{subequations}\label{eq:BigMPenalty2}
	\begin{align}
		\min_{\bm{w},\bm{x},\bm z}\;&\|\bm{w}\|_2^2+\lambda\sum_{i=1}^n z_i\label{eq:BigMPenalty2_obj}\\
		\text{s.t.}\:&x_i=1-y_i(\bm{a_i^\top w})\quad \forall i\in [n]\label{eq:BigMPenalty2_Eq}\\
        &y_i\left(\bm{a_i^\top w}\right)\geq 1-Mz_i\quad \forall i\in [n],\label{eq:BigMPenalty2_classification}\\
		&\bm{w}\in \R^p,\;\bm{z}\in \{0,1\}^n\label{eq:BigMPenalty2_bounds}
	\end{align}
\end{subequations}
If we let $\bm{A}\in \R^{n\times p}$ be the matrix whose rows are given by $y_i\cdot \bm{a_i^\top}$, we can rewrite constraints \eqref{eq:BigMPenalty2_Eq} compactly as $\bm{x}=\bm{1}-\bm{A w}$. Then observe that, in an optimal solution, $\bm{w^*}$ is set to the least square solution of system $\bm{x}=\bm{1}-\bm{A w}$, that is, $\bm{w^*}=\bm{A^\dagger}\left(\bm{1}-\bm{x}\right)$ where $\bm{A^\dagger}$ is the Moore-Penrose pseudoinverse of $\bm{A}$. The quadratic term in the objective \eqref{eq:BigMPenalty2_obj} can then be rewritten as
$\bm{1^\top}(\bm{A^\dagger})^\top \bm{A^\dagger 1}-2\bm{1^\top}(\bm{A^\dagger})^\top\bm{A^\dagger x}+\bm{x^\top}(\bm{A^\dagger})^\top \bm{A^\dagger x}$. Introducing set $\bar{\mathcal{X}}$, we find that we can reformulate \eqref{eq:BigMPenalty2} as 
\begin{subequations}\label{eq:SVMReform0}
	\begin{align}
		\bm{1^\top}(\bm{A^\dagger})^\top \bm{A^\dagger 1}+\min_{\bm{w},\bm{x},\bm{X},\bm{ z}}\;&-2\bm{1^\top}(\bm{A^\dagger})^\top\bm{A^\dagger x}+\langle(\bm{A^\dagger})^\top \bm{A^\dagger},\bm{X}\rangle+\lambda\sum_{i=1}^n z_i\label{eq:SVMReform0_obj}\\
		\text{s.t.}\:
        &\bm{x}=\bm{1}-\bm{Aw}\label{eq:SVMReform0_Constr1}\\
        &(\bm{x},\bm{X},\bm{z})\in \bar{\mathcal X},\;\bm{w}\in \R^p\label{eq:SVMReform0_Constr2}.
		\end{align}
\end{subequations}
In particular, we find that SVM problems with the 0-1 loss are a special case of \eqref{prob:two-sided} with $\mathcal{F}=\R^n\times \{0,1\}^n$.  In this paper, we derive big-M free relaxations of $\clconv(\bar{\mathcal X})$ that yield estimators that perform similarly in practice to solving \eqref{eq:BigMPenalty} or \eqref{eq:BigMCardinality} to optimality, but require only a fraction of the computational time.

\paragraph{On scalability} Describing set  $(\bm{x},\bm{X},\bm{z})\in \bar{\mathcal X}$ requires the $n$-dimensional positive semidefinite constraint $\bm{X}-\bm{xx^\top}\in \mathcal{S}_+^n$. In our experiments (with a laptop), the simple addition of this constraint causes off-the-shelf SDP solvers to fail if $n>200$ due to excessive computational and memory requirements. Unfortunately, several practical settings involve more than 200 datapoints, thus a direct application of convexifications of $\bar{\mathcal{X}}$ based on formulation \eqref{eq:SVMReform0} is impractical. In \S\ref{sec:SVM} we discuss an alternative formulation of the proposed strengthenings for $\bar{\mathcal{X}}$, requiring cones of dimension $p$ instead of $n$. The proposed method scales almost linearly with $n$ (but has a high degree of polynomial dependence on $p$), and can solve problems with $n$ in the thousands of seconds, provided that $p$ is small. The dependence on $p$ reduces the scope of the problems that can be tackled with the relaxations proposed in this paper, but settings where $n$ is large and $p$ is small arise in several relevant settings.

Indeed, in many high-stakes domains, such as healthcare and public policy, data tends to be low dimensional since collecting a large number of features through extensive surveys or medical tests is often infeasible. Moreover, while poor prediction performance can lead to significant economic and social costs, the prevalence of noisy labels undermines both learning and fairness, requiring more robust methods \citep{frenay2013classification}. For instance, in healthcare, diagnostic tests are not perfectly accurate, class labeling can be subjective, and survey data is susceptible to misreporting biases \citep{malossini2006detecting, frenay2013classification, obermeyer2019dissecting}. In standard questions on illegal drug usage, sexual activity, contraception, and even vaccination status, data can be unreliable as participants may misrepresent their information \citep{zhao2009non, cheung2017impact}.  In public policy, the use of proxies can introduce noisy labels correlated with protected attributes, and models predicting crime and violence often rely on underreported data affected by response and recall biases \citep{fogliato2020fairness,guerdan2023ground}. In these contexts, methods that are sensitive to outliers and label noise, such as SVMs with hinge loss, often underperform, further emphasizing the need for more robust approaches.

\section{Background on MIO}\label{sec:litReview}
\paragraph{MIO for machine learning} Due to substantial improvements over the past three decades, mixed-integer \emph{linear} optimization technology has seen widespread application in logistics and operational problems \cite{bixby2007progress,bixby2012brief}. Progress in nonlinear MIO has been more limited, initially motivated by problems in process systems engineering \cite{duran1986outer} and finance \cite{bienstock1996computational}, but a recent stream of research has focused on statistical problems. 

Most of the work to date focuses on problems with feature selection or sparsity, where penalties are imposed for features or regression variables that assume non-zero variables. In particular, least squares regression with sparsity has received substantial attention \cite{atamturk2019rank,atamturk2020safe,bertsimas2015or,bertsimas2016best,bertsimas2020sparse,cozad2014learning,cozad2015combined,gomez2021mixed,hazimeh2020fast,miyashiro2015subset,wilson2017alamo,xie2020scalable}, to the point that specialized branch-and-bound solvers that scale to problems with tens of thousands of decision variables have been developed \cite{hazimeh2022sparse}. Other statistical problems with sparsity have also been investigated, such as sparse logistic regression \cite{deza2022safe,sato2016feature,ustun2019learning}, sparse principal component analysis \cite{dey2018convex,kim2022convexification,li2024exact} and sparse SVMs \cite{guan2009mixed,lee2022mixed,dedieu2021learning}.
However, in most settings (besides sparse least squares regression), the proposed approaches resort to off-the-shelf solvers and struggle to solve problems with hundreds of decision variables to optimality. 

Some mixed-integer optimization approaches have also been proposed for statistical learning problems with outliers. Most of the MIO approaches consider least squares problems \cite{gomez2021outlier,gomez2023outlier,insolia2022simultaneous,zioutas2005deleting,zioutas2009quadratic}, although a couple of works in the literature tackle SVM problems \cite{brooks2011support,jammal2020robust,nguyen2013algorithms} or similar classification problems \cite{ustun2016supersparse}. In  most cases, mixed-integer nonlinear problems arising in the context of outlier detection cannot be solved to optimality with current technology. Indeed, existing MIO approaches do not scale beyond problems with a few dozen datapoints. 

\paragraph{Convexification and solution of MIOs}
Most solution methods or approximations of non-convex optimization problems such as \eqref{prob:two-sided} are based on constructing convex relaxations of the non-convex sets. As mentioned before, there has been significant progress in problems with sparsity, which can be modeled naturally as problems with indicator variables and constraints. 
 More precisely, for the case of convex quadratic optimization, consider set $$\mathcal{Y}_{\bm{Q}}:=\{(\bm{x},\bm{z},t)\in\mathbb{R}^n\times\{0,1\}^n\times\mathbb{R}:t\geq \bm{x}^\top \bm{Q}\bm{x},~x_i(1-z_i)=0,\;i=1,\ldots,n\},$$ where variables $\bm{z}$ encode the logical condition $z_i=0\implies x_i=0$ and can be used to penalize variables assuming non-zero values. In addition to its applications in statistical problems, set $\mathcal{Y}_Q$ also arises in portfolio optimization \cite{bienstock1996computational}, unit commitment \cite{bacci2024new}, and model predictive control \cite{lee2024strong}, among others. In practice, the quadratic equality constraints are linearized using big-M constraints 
$$-Mz_i\leq x_i\leq Mz_i,\; i=1,\dots,n,$$
where $M$ is a sufficiently large number. 

Observe that, on the one hand, $\mathcal{Y}_{\bm{Q}}$ is a restriction of $\mathcal{X}_{\bm{Q}}^-$, obtained by replacing constraints $x_i(1-z_i)\leq 0$ with equalities, and in fact $\mathcal{Y}_{\bm{Q}}$ can be represented as the intersection of two sets similar to $\mathcal{X}_{\bm{Q}}^-$. On the other hand, set $\mathcal{X}_{\bm{Q}}$ can also be expressed in an extended formulation via indicator variables as 
\begin{align}
    \mathcal{X}_{\bm{Q}}:=\Big\{(\bm{x},\bm{z},t)\in\mathbb{R}^n\times\{0,1\}^n\times\mathbb{R}:
    &\exists \bm{x^-,\bm{x^+}}\in \R_+^n \text{ such that } t\geq \bm{x}^\top \bm{Q}\bm{x},\notag\\ &\bm{x}=\bm{x^+}-\bm{x^-},\; ~x_i^-z_i= 0,~x_i^+(1-z_i)= 0, ~i=1,\ldots,n\Big\}.\label{eq:extendedReformulation}
\end{align}
Given these links, it should be possible to ``translate" valid inequalities for either $\clconv(\mathcal{X}_{\bm{Q}})$ or $\clconv(\mathcal{Y}_{\bm{Q}})$ to the other set, although how to do so has not been studied in the literature, nor have such relaxations been implemented in practice. Moreover, it is unclear whether there is a direct correspondence between \emph{ideal} relaxations of the two sets. Indeed, the convex hull of an intersection is often a strict subset of the intersection of individual convex hulls, and transformations such as \eqref{eq:extendedReformulation} include additional nonnegative and equality constraints. 


Two special cases of set $\mathcal{Y}_{\bm{Q}}$ are particularly relevant for this paper, corresponding to the cases where $n=1$ and where $\bm{Q}$ is rank-one, formally defined as 
\begin{align}
\mathcal{Y}_{1}&:=\left\{(x,z,t)\in\mathbb{R}\times\{0,1\}\times\mathbb{R}:t\geq x^2,~x(1-z)=0\right\}\\
\mathcal{Y}_{R1}&:=\left\{(\bm{x},\bm{z},t)\in\mathbb{R}^n\times\{0,1\}^n\times\mathbb{R}:t\geq \left(\bm{d^\top x}\right)^2,~x_i(1-z_i)=0,\;i=1,\ldots,n\right\}.
\end{align}
Ideal relaxations for $\mathcal{Y}_1$ were first proposed in \cite{Ceria1999,Frangioni2006} and further explored in \cite{akturk2009strong,Gunluk2010}, while ideal relaxations for $\mathcal{Y}_{R1}$ were originally proposed in \cite{atamturk2019rank}. We summarize these findings in Proposition~\ref{prop:indicatorHull}. 
\begin{proposition}[\citet{atamturk2019rank,Frangioni2006}]\label{prop:indicatorHull}
If $d_i\neq 0$ for all $i=1,\dots,n$, then the identities
\begin{align}
\cl\;\conv (\mathcal{Y}_{1})&:=\left\{(x,z,t)\in\mathbb{R}\times[0,1]\times\mathbb{R}:t\geq x^2/z\right\}\label{eq:indPerspective}\\
\cl\;\conv (\mathcal{Y}_{R1})&:=\left\{(\bm{x},\bm{z},t)\in\mathbb{R}^n\times[0,1]^n\times\mathbb{R}:t\geq \left(\bm{d^\top x}\right)^2/\min\{\bm{1^\top z},1\}\right\}\label{eq:indR1}
\end{align}
hold.
\end{proposition}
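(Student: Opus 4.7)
The plan is to prove each identity by showing both inclusions. For the $\mathcal{Y}_1$ identity (the classical perspective reformulation), I would first verify that the set $\{(x,z,t) : t \geq x^2/z, z \in [0,1]\}$ is closed and convex, since $(x,z) \mapsto x^2/z$ is the perspective of the convex function $x^2$ and is closed under the $0/0 = 0$ convention. The inclusion $\mathcal{Y}_1 \subseteq \{t \geq x^2/z\}$ is a case check on $z \in \{0,1\}$. For the reverse inclusion, I would decompose any point with $z \in (0,1)$ as $(x,z,t) = z \cdot (x/z, 1, t/z) + (1-z)(0,0,0)$ and verify that both atoms lie in $\mathcal{Y}_1$ (the first since $t/z \geq (x/z)^2$), handling the boundary $z \in \{0,1\}$ by closure.

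For the $\mathcal{Y}_{R1}$ identity, let $\mathcal{R}$ denote the RHS. First I would establish convexity and closedness of $\mathcal{R}$ via the observation that, for $t \geq 0$, the constraint $t \geq (\bm{d^\top x})^2 / \min\{\bm{1^\top z}, 1\}$ is equivalent to the conjunction $t \cdot \bm{1^\top z} \geq (\bm{d^\top x})^2$ and $t \geq (\bm{d^\top x})^2$, both of which are closed convex (one is a standard perspective, the other is a convex quadratic inequality). Next, the forward inclusion $\mathcal{Y}_{R1} \subseteq \mathcal{R}$ follows from a case check: if $\bm{x} = \bm{0}$ the inequality reduces to $t \geq 0$; otherwise some $x_i \neq 0$ forces $z_i = 1$, so $\bm{1^\top z} \geq 1$, $\min\{\bm{1^\top z}, 1\} = 1$, and the inequality becomes $t \geq (\bm{d^\top x})^2$. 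For the reverse inclusion, given $(\bm{x}, \bm{z}, t) \in \mathcal{R}$ with $\bm{1^\top z} < 1$, I would first reduce to the case $\bm{1^\top z} = 1$ by the perspective split $(\bm{x}, \bm{z}, t) = \bm{1^\top z} \cdot (\bm{x}/\bm{1^\top z}, \bm{z}/\bm{1^\top z}, t/\bm{1^\top z}) + (1 - \bm{1^\top z}) \cdot (\bm{0}, \bm{0}, 0)$, where the first atom lands in the $\bm{1^\top z} = 1$ subcase and the second trivially lies in $\mathcal{Y}_{R1}$.

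The main obstacle is the remaining case $\bm{1^\top z} \geq 1$ with $t \geq (\bm{d^\top x})^2$, where $\bm{x}$ may have support outside $\{i : z_i > 0\}$ and therefore cannot literally belong to $\mathcal{Y}_{R1}$. The key insight is to exploit the rank-one structure: only $\bm{d^\top x}$ enters the quadratic, so one can shift coordinates of $\bm{x}$ along directions orthogonal to $\bm{d}$ without changing $(\bm{d^\top x})^2$. Concretely, for small $\epsilon > 0$ I would build a two-atom decomposition that places mass $1-\epsilon$ on a point with $\bm{z}^1$ equal to the 0--1 pattern of $\bm{z}$ and mass $\epsilon$ on an auxiliary point with $\bm{z}^2$ additionally activating coordinates where $\bm{x}$ has ``stray'' support, choosing $\bm{x}^2$ to carry the stray mass and $\bm{x}^1$ to carry a $\bm{d}$-orthogonal compensation so that $\bm{d^\top x}^2$ in the auxiliary atom vanishes. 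A direct computation shows the resulting $t$-coordinate equals $(\bm{d^\top x})^2/(1-\epsilon)$, which converges to $(\bm{d^\top x})^2$ as $\epsilon \to 0$, placing $(\bm{x}, \bm{z}, t)$ in $\clconv(\mathcal{Y}_{R1})$ by closure. Generalising this two-atom construction to an arbitrary fractional $\bm{z}$ with full support pattern, and writing it cleanly via an auxiliary atom per off-support coordinate (or via an inductive argument on $|\mathrm{supp}(\bm{x}) \setminus \mathrm{supp}(\bm{z})|$), is the main technical step.
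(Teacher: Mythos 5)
First, note that the paper itself does not prove Proposition~\ref{prop:indicatorHull}: it is quoted from \citet{atamturk2019rank} and \citet{Frangioni2006}, and the closest in-paper analogue is the proof of Proposition~\ref{propo:rank1}, which establishes a convex-hull identity by showing that every linear objective attains the same optimal value over the mixed-integer set and over the candidate relaxation. Your route is a direct primal one --- exhibit each point of the candidate set as a limit of convex combinations of points of $\mathcal{Y}_{R1}$ --- which is a legitimate alternative. Your treatment of $\mathcal{Y}_1$, the closedness and convexity of the right-hand sides, the forward inclusions, and the perspective split reducing $\bm{1^\top z}<1$ to $\bm{1^\top z}=1$ are all correct (modulo the degenerate case $\bm{1^\top z}=0$, where the split divides by zero and a separate limiting argument is needed).

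The genuine gap sits exactly where you flag ``the main technical step,'' and the generalizations you sketch aim at the wrong obstacle. Your two-atom construction puts mass $1-\epsilon$ on an atom whose $\bm{z}$-coordinate is the 0--1 pattern of $\bm{z}$; every atom of $\mathcal{Y}_{R1}$ must have a binary $\bm{z}$-coordinate, so for fractional $\bm{z}$ the combination's $\bm{z}$-coordinate converges to that 0--1 pattern rather than to $\bm{z}$, and no choice of the single auxiliary atom can repair this. The difficulty is not the stray support of $\bm{x}$ (your $\bm{d}$-orthogonal auxiliary atom with $\bm{z}=\bm{1}$ and $t=0$ handles that correctly) but the fractionality of $\bm{z}$ itself, so neither ``one auxiliary atom per off-support coordinate'' nor induction on $|\supp(\bm{x})\setminus\supp(\bm{z})|$ resolves it. The missing ingredient is: since $\{\bm{z}\in[0,1]^n:\bm{1^\top z}\geq 1\}$ is an integral polytope, write $\bm{z}=\sum_k\lambda_k\bm{w^k}$ with $\bm{w^k}\in\{0,1\}^n$ and $\bm{1^\top w^k}\geq 1$; for each $k$ pick $i_k\in\supp(\bm{w^k})$ and set $\bm{x^k}=(\bm{d^\top x}/d_{i_k})\bm{e_{i_k}}$ and $t^k=(\bm{d^\top x})^2$ --- this uses the hypothesis $d_i\neq 0$ and makes $\bm{d^\top x^k}$ identical across atoms, so that Jensen's inequality is tight. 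The residual $\bm{v}=\bm{x}-\sum_k\lambda_k\bm{x^k}$ satisfies $\bm{d^\top v}=0$ and is absorbed by your $\epsilon$-weighted atom $(\bm{v}/\epsilon,\bm{1},0)$; letting $\epsilon\to 0$ places $(\bm{x},\bm{z},(\bm{d^\top x})^2)$ in $\clconv(\mathcal{Y}_{R1})$, and general $t\geq(\bm{d^\top x})^2$ follows from the recession direction $(\bm{0},\bm{0},1)$. With this decomposition inserted, your argument closes.
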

More sophisticated relaxations for special cases of matrices $\bm{Q}$ are also available in the literature \cite{wei2024convex,liu2023graph,gomez2024real,lee2024convexification,liu2024polyhedral}, as well as specialized algorithms \cite{das2008algorithms,bhathena2024parametric,bienstock2024solving}. Nonetheless,
practical approaches for general quadratic optimization problems with indicators are still based on perspective relaxations \eqref{eq:indPerspective}. The best performing approaches, however, do not directly resort to off-the-shelf solvers, but rather use additional tailored techniques to effectively handle the nonlinear functions in branch-and-bound \cite{frangioni2016approximated,hazimeh2022sparse}. These approaches are also often coupled with decomposition approaches to identify separable or rank-one terms. Initially, the choice of matrices was obtained directly from the problem statement or via simple heuristics based on the minimum eigenvalues \cite{Frangioni2006}, but later improved heuristics \cite{Frangioni2007} or ``optimal'' approaches resulting in decompositions with the best bound \cite{dong2015regularization,zheng2014improving,frangioni2018decompositions} were proposed.

Variants of set $\mathcal{Y}_{\bm{Q}}$ have also been considered in the literature. In particular, \cite{atamturk2019rank,atamturk2018strong,han20232,atamturk2023supermodularity,wei2020convexification,xie2020scalable,wei2021ideal} study restrictions of $\mathcal{Y}_{\bm{Q}}$ where continuous variables are restricted to be non-negative. The results in \cite{atamturk2023supermodularity} are especially relevant, as they show that ideal descriptions of rank-one sets with non-negative variables are substantially more complicated to describe than \eqref{eq:indR1}. Note that representation \eqref{eq:extendedReformulation} also introduces non-negative variables, thus descriptions of $\text{cl conv}(\mathcal{X}_{\bm{Q}})$ for rank-one matrices could in principle be difficult to represent in the original space of variables. Further generalizations of these results to bounded variables or more general nonlinear sets can be found in \cite{han2024compact,shafiee2024constrained,anstreicher2021quadratic,de2024explicit}.

Given the growing literature on problems with indicator variables involving set $\mathcal{Y}_{\bm{Q}}$, the dearth of results concerning set $\mathcal{X}_{\bm{Q}}$ in the mixed-integer literature is surprising. The non-convex constraints \eqref{eq:two-sided_sign} arise frequently in chance-constrained optimization, often in the form of the big-M constraints \eqref{eq:mixing}, where the binary variables are used to track which scenarios satisfy the constraints. There is certainly extensive literature studying this class of problems \cite{gunluk2001mixing,kuccukyavuz2012mixing,kilincc2019joint,xie2018quantile,abdi2016mixing,luedtke2010integer,zhao2017polyhedral,qiu2014covering,song2014chance}, although the sets under study invariably involve additional constraints such as non-negativity or bounds on the continuous invariables. Indeed, letting $$\mathcal{X}_{\emptyset}:=\{(\bm{x},\bm{z})\in\mathbb{R}^n\times\{0,1\}^n:x_iz_i\geq 0,~x_i(1-z_i)\leq 0, ~i=1,\ldots,n\}$$ be the set without any nonlinear term or side constraints, it can be shown easily that $\clconv(\mathcal{X}_{\emptyset})=\R^n\times [0,1]^n$ (e.g., this result can be obtained as a direct corollary of Theorem~\ref{theo:hullGen} in this paper). In other words, unless the nonlinear term is accounted for in convexifications (or additional constraints are introduced), it is not possible to construct strong relaxations of $\mathcal{X}_{\bm{Q}}$. Thus, existing results from the study of stochastic programs or other mixed-integer linear optimization problems is not applicable to the setting considered in this paper.



\section{Convexification of the rank-one case}\label{sec:hull}

In this section, we derive the convex hull representation for $\mathcal{X}_{\bm{Q}}$ for the case that $\bm{Q}$ is a rank-one matrix, that is, $\bm{Q}=\bm{d}\bm{d}^\top$ for some $\bm{d}\in\mathbb{R}^n$. 
For the ease of exposition, we redefine the rank-one version of $\mathcal{X}_{\bm{Q}}$ as
\begin{equation*}
    \mathcal{X}_{\bm{d}}=\{(\bm{x},\bm{z},t)\in\mathbb{R}^n\times\{0,1\}^n\times\mathbb{R}:t\geq (\bm{d}^\top \bm{x})^2,~ x_i z_i\geq 0,~ x_i(1-z_i)\leq 0,~i=1,\ldots,n\}.
\end{equation*}
Moreover, given $\bm{d}\in \mathbb{R}_+^n$, define the convex set
\begin{equation*} 
\mathcal{\widehat X}_{\bm{d}}=\Cbracket{(\bm{x},\bm{z},t)\in\mathbb{R}^n\times[0,1]^n\times\mathbb{R}:t\geq\frac{(\bm{d}^\top \bm{x})_+^2}{\min\{1,\sum_{i\in [n]} z_i\}} + \frac{(\bm{d}^\top \bm{x})_-^2}{\min\{1,\sum_{i\in [n]} (1-z_i)\}}}.
\end{equation*}

\begin{proposition}[Validity]\label{prop:valid}
For $\bm{d}\geq \bm{0}$, if $(\bm{x},\bm{z},t)\in \mathcal{X}_{\bm{d}}$, then $(\bm{x},\bm{z},t)\in \mathcal{\widehat X}_{\bm{d}}$.
\end{proposition}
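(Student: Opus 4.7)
The plan is to show directly that any integer feasible point of $\mathcal{X}_{\bm{d}}$ lies in $\mathcal{\widehat X}_{\bm{d}}$ via a short case analysis on the sign of $\bm{d}^\top \bm{x}$. The key structural observation is that when $\bm{d}\geq \bm{0}$, the sign indicator constraints together with $z_i\in\{0,1\}$ pin down the sign of each $d_ix_i$ in a way compatible with the sign of $\bm{d}^\top \bm{x}$.

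First, I would take $(\bm{x},\bm{z},t)\in\mathcal{X}_{\bm{d}}$ and unpack the sign indicator constraints using $z_i\in\{0,1\}$: for each $i$, if $z_i=1$ then $x_iz_i\geq 0$ gives $x_i\geq 0$, and if $z_i=0$ then $x_i(1-z_i)\leq 0$ gives $x_i\leq 0$. Combined with $d_i\geq 0$, this yields $d_ix_i\geq 0$ whenever $z_i=1$ and $d_ix_i\leq 0$ whenever $z_i=0$. Clearly $\bm{z}\in[0,1]^n$ holds trivially.

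Next, I would split into three cases based on the sign of $\bm{d}^\top\bm{x}$. If $\bm{d}^\top\bm{x}>0$, then some index $i$ must satisfy $d_ix_i>0$, which by the previous paragraph forces $z_i=1$; thus $\sum_i z_i\geq 1$, so $\min\{1,\sum_i z_i\}=1$. In this case $(\bm{d}^\top\bm{x})_-=0$ and the right-hand side of the $\mathcal{\widehat X}_{\bm{d}}$ inequality collapses to $(\bm{d}^\top\bm{x})^2$, which is bounded above by $t$. The case $\bm{d}^\top\bm{x}<0$ is symmetric: some $i$ satisfies $d_ix_i<0$, forcing $z_i=0$, so $\sum_i(1-z_i)\geq 1$ and $\min\{1,\sum_i(1-z_i)\}=1$; the positive-part term vanishes, and again the right-hand side reduces to $(\bm{d}^\top\bm{x})^2\leq t$. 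If $\bm{d}^\top\bm{x}=0$, both numerators vanish and, invoking the convention $0/0=0$, the right-hand side is zero, while $t\geq 0$ by $t\geq(\bm{d}^\top\bm{x})^2=0$.

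I do not anticipate any serious obstacle here; the argument is essentially bookkeeping, and the only subtlety is correctly invoking the $0/0=0$ convention in the degenerate case $\bm{d}^\top\bm{x}=0$ where the relevant denominator may also be zero. The proof uses only the hypothesis $\bm{d}\geq\bm{0}$ at the single step where the sign of $d_ix_i$ is read off from the sign of $x_i$, so it is clear where this assumption is essential.
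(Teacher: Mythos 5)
Your proof is correct and follows essentially the same route as the paper's: a direct verification that the sign constraints force the relevant denominator to equal $1$ whenever the corresponding numerator is nonzero. The only difference is cosmetic \textemdash\ you organize the cases by the sign of $\bm{d}^\top\bm{x}$, whereas the paper cases on whether $\bm{z}=\bm{0}$, $\bm{z}=\bm{1}$, or neither; the underlying observations are identical.
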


\begin{proof}
Consider an arbitrary point $(\bm{x},\bm{z},t)\in \mathcal{X}_{\bm{d}}$. If both denominators are $1$, that is, $\sum_{i\in [n]} z_i \geq 1$ and $\sum_{i\in [n]} (1-z_i)\geq 1$, then we have $t\geq (\bm{d}^\top \bm{x})^2_+ + (\bm{d}^\top \bm{x})^2_-=(\bm{d}^\top \bm{x})^2$. If $\sum_{i\in [n]} z_i=0\Leftrightarrow \bm{z}=\bm{0}$, then $\bm{x}\leq \bm{0}$ and $\bm{d^\top x}\leq 0$, thus $(\bm{d}^\top\bm{x})_+=0$ (the first term vanishes) and $\sum_{i\in [n]} (1-z_i)\geq 1$. In this case, the inequality simplifies to $t\geq (\bm{d}^\top \bm{x})^2_-=(\bm{d}^\top \bm{x})^2$. Similar arguments hold for the case $\sum_{i\in [n]} (1-z_i)=0\Leftrightarrow \bm{z}=\bm{1}$.
\end{proof}

Observe that set $\mathcal{\widehat{X}}_{\bm{d}}$ is indeed convex since it can be represented by SOCP constraints using additional variables. Note first that the inequality describing $\mathcal{\widehat{X}}_{\bm{d}}$ can be expressed by inequalities 
\begin{align}
\label{eq:socp_rep}
t\cdot \min\left\{1,\sum_{i\in [n]}z_i\right\}\geq (\bm{d}^\top \bm{x})^2_+  \text{  and }  t\cdot\min\left\{1,\sum_{i\in [n]}(1-z_i)\right\}\geq (\bm{d}^\top \bm{x})^2_-.
\end{align}
Letting additional variables $w_+:=(\bm{d}^\top \bm{x})_+$, $w_-:=(\bm{d}^\top \bm{x})_-$, $r_+:=\min\{1,\sum_{i\in [n]}z_i\}$ and $r_-:=\min\{1,\sum_{i\in [n]}(1-z_i)\}$, inequalities \eqref{eq:socp_rep} can be formulated as the system
\begin{align*}
&\bm{d}^\top \bm{x} = w_+ + w_- \\
& w_+ \geq 0,~w_-\leq 0 \\
&t \cdot r_+ \geq w^2_+,\; t \cdot r_- \geq w^2_-\\
&1\geq r_+,~ \sum_{i\in [n]}z_i  \geq r_+,~r_+\geq 0 \\
&1\geq r_-,~ \sum_{i\in [n]}(1-z_i)  \geq r_-,~r_-\geq 0.
\end{align*}

The following proposition further shows that the set $\mathcal{\widehat{X}}_{\bm{d}}$ is an ideal convex relaxation for $\mathcal{X}_{\bm{d}}$ when $\bm{d}$ is positive. Then, in Theorem~\ref{theo:hullGen}, we generalize the result to arbitrary vectors $\bm{d}$.

\begin{proposition}\label{propo:rank1} If $d_i> 0$ for all $i\in [n]$, then 
$\mathcal{\widehat{X}}_{\bm{d}}=\textup{cl }\conv(\mathcal{X}_{\bm{d}})$.
\end{proposition}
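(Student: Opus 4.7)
The inclusion $\clconv(\mathcal{X}_{\bm{d}}) \subseteq \mathcal{\widehat X}_{\bm{d}}$ follows immediately from Proposition~\ref{prop:valid} together with the fact, established by the SOCP representation derived above, that $\mathcal{\widehat X}_{\bm{d}}$ is closed and convex. The bulk of the effort lies in the reverse inclusion.

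For $\mathcal{\widehat X}_{\bm{d}} \subseteq \clconv(\mathcal{X}_{\bm{d}})$, my plan is to represent an arbitrary $(\bar{\bm{x}}, \bar{\bm{z}}, \bar t) \in \mathcal{\widehat X}_{\bm{d}}$ as a (limit of) convex combination of points in $\mathcal{X}_{\bm{d}}$. A first useful observation is that the involution $(\bm{x}, \bm{z}, t) \mapsto (-\bm{x}, \bm{1}-\bm{z}, t)$ preserves both $\mathcal{X}_{\bm{d}}$ and $\mathcal{\widehat X}_{\bm{d}}$, which allows the problem to be split along the sign of $\bm{d}^\top \bm{x}$. When $\bm{d}^\top \bar{\bm{x}} \geq 0$, the defining inequality of $\mathcal{\widehat X}_{\bm{d}}$ collapses to $\bar t \geq (\bm{d}^\top \bar{\bm{x}})^2/\min\{1,\sum_i \bar z_i\}$, which is formally identical to the perspective bound \eqref{eq:indR1} for the one-sided rank-one set $\mathcal{Y}_{R1}$ of Proposition~\ref{prop:indicatorHull}.

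The construction I would pursue adapts the convex-combination strategy underlying \eqref{eq:indR1}: for each index $i$ with $\bar z_i > 0$, place an atom $(\bm{x}^{(i)}, \bm{e}_i, t^{(i)}) \in \mathcal{X}_{\bm{d}}$ with $\bm{x}^{(i)}$ supported on index $i$, weight $\lambda_i$ proportional to $\bar z_i$, and $\bm{x}^{(i)}$ calibrated so that $\bm{d}^\top \bm{x}^{(i)}$ takes the common value $\bm{d}^\top \bar{\bm{x}}/\min\{1,\sum_j \bar z_j\}$ across atoms; this common value makes the Cauchy--Schwarz inequality tight and recovers the perspective identity $\sum_i \lambda_i (\bm{d}^\top \bm{x}^{(i)})^2 = (\bm{d}^\top \bar{\bm{x}})^2/\min\{1,\sum_j \bar z_j\}$. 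Any residual slack in $\bar{\bm{z}}$ (when $\sum_i \bar z_i < 1$) is absorbed by the atom $(\bm{0},\bm{0},0)$ with weight $1-\sum_i \bar z_i$, and the involution-symmetric construction accounts for the negative part.

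I expect the main obstacle to be coordinating the positive and negative parts so that the combined convex combination matches $\bar{\bm{x}}$, $\bar{\bm{z}}$, and $\bar t$ simultaneously, rather than only a one-sided projection. In particular, when $\bar{\bm{x}}$ has components of mixed sign or $\bar{\bm{z}}$ is fractional in both directions, the naive componentwise decomposition $\bar x_i = (\bar x_i)_+ - (-\bar x_i)_+$ need not satisfy $\bm{d}^\top(\bar{\bm{x}})_{\pm} = (\bm{d}^\top \bar{\bm{x}})_{\pm}$, so the one-sided hull cannot be invoked component-by-component. Resolving this will require a more delicate decomposition, most naturally via the extended formulation \eqref{eq:extendedReformulation} with nonnegative lifts $\bm{x}^+, \bm{x}^-$ tied to $\bar{\bm{z}}$ by the correct indicator constraints, combined with a Carath\'eodory-style reshuffling of mass between positive and negative atoms that preserves both $\bar{\bm{x}}$ and $\bm{d}^\top \bar{\bm{x}}$; this reshuffling, which exploits the slack between $\bar t$ and the tight Cauchy--Schwarz bound whenever both sign components are active, constitutes the technical heart of the argument.
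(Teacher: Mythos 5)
Your easy direction is fine: Proposition~\ref{prop:valid} plus the SOCP representation of $\mathcal{\widehat X}_{\bm{d}}$ gives $\clconv(\mathcal{X}_{\bm{d}})\subseteq\mathcal{\widehat X}_{\bm{d}}$. The hard direction, however, is not a proof but a plan, and the plan as written contains a step that provably fails. Every atom of the form $(\bm{x}^{(i)},\bm{e_i},t^{(i)})\in\mathcal{X}_{\bm{d}}$ with $\bm{x}^{(i)}$ supported on index $i$ must satisfy $x^{(i)}_i z^{(i)}_i\geq 0$ with $z^{(i)}_i=1$, hence $\bm{x}^{(i)}\geq\bm{0}$; any convex combination of such atoms (together with the origin) therefore has $\bar{\bm{x}}\geq\bm{0}$. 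But $\mathcal{\widehat X}_{\bm{d}}$ contains points with mixed-sign $\bar{\bm{x}}$ even when $\bm{d}^\top\bar{\bm{x}}\geq 0$ (e.g.\ $\bm{d}=\bm{1}$, $\bar{\bm{x}}=(1,-1)$, $\bar{\bm{z}}=(\tfrac12,\tfrac12)$, $\bar t=0$), so the proposed atom family cannot reach a dense subset of $\mathcal{\widehat X}_{\bm{d}}$; the atoms must carry negative mass on coordinates where $z_j=0$. You flag exactly this coordination of positive and negative parts as ``the technical heart of the argument,'' but you do not supply it, and it is precisely the part that makes the two-sided set harder than $\mathcal{Y}_{R1}$. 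As it stands the argument establishes only one inclusion.

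For comparison, the paper does not attempt an explicit convex-combination certificate at all. It takes the dual route: for every linear objective $\bm{\alpha}^\top\bm{x}+\bm{\beta}^\top\bm{z}+\gamma t$ it shows the infima over $\mathcal{X}_{\bm{d}}$ and $\mathcal{\widehat X}_{\bm{d}}$ coincide, by (i) disposing of all unbounded cases, which forces $\gamma>0$ and $\bm{\alpha}$ proportional to $\bm{d}$; (ii) projecting out $\bm{x}$ in closed form to reduce the relaxation to minimizing $\bm{\beta}^\top\bm{z}-\tfrac{\eta^2}{4}\min\{1,\bm{1}^\top\bm{z}\}$ over $[0,1]^n$; and (iii) invoking total unimodularity to obtain an integral optimal $\bm{z}$, from which a feasible point of $\mathcal{X}_{\bm{d}}$ attaining the bound is read off. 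This sidesteps the mixed-sign bookkeeping entirely, because the unboundedness analysis already eliminates every objective for which the positive and negative parts of $\bm{d}^\top\bm{x}$ would need to be balanced against each other. If you want to complete your primal approach you would need either a Carath\'eodory-type argument producing atoms with support on both a $\{z_j=1\}$ block and a $\{z_j=0\}$ block, or to restrict attention to extreme points of $\mathcal{\widehat X}_{\bm{d}}$ and show these are limits of points of $\mathcal{X}_{\bm{d}}$; either way the missing step is substantive, not routine.
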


\begin{proof}
    Consider the two optimization problems: the first one is the mixed-integer formulation
    \begin{align}\label{prob:mip}
        \min\; \bm{\alpha}^\top \bm{x} + \bm{\beta}^\top \bm{z} +\gamma t \text{ s.t. } (\bm{x},\bm{z},t)\in \mathcal{X}_{\bm{d}}
    \end{align}
    and the second is its convex relaxation
   \begin{equation}\label{prob:relax}
        \min \bm{\alpha}^\top \bm{x} + \bm{\beta}^\top \bm{z} +\gamma t \text{ s.t. } (\bm{x},\bm{z},t)\in \mathcal{\widehat{X}}_{\bm{d}},
    \end{equation}
    where $\bm{\alpha},\bm{\beta}\in\mathbb{R}^n,\gamma\in\mathbb{R}$. We show that for any coefficients $\bm{\alpha},\bm{\beta},\gamma$, the two problems share the same optimal value and solution.
   
    We start by discussing all cases in which both \eqref{prob:mip} and \eqref{prob:relax} are unbounded or trivial. First, note that if $\gamma<0$, then both problems are unbounded by letting $t\to\infty$. Second, if $\gamma=0$ and $\alpha_i\neq 0$ for some $i\in [n]$, then both problems are unbounded by either letting $x_i\to\infty$ and setting $z_i=1$, or by letting $x_i\to-\infty$ or $z_i=0$, with all remaining variables set to zero. Third, if $\gamma=0$ and $\bm{\alpha}=\bm{0}$, then letting $z_i=\mathbbm{1}_{\{\beta_i<0\}}$ is optimal for both problems. Thus, it suffices to consider the case $\gamma>0$, and we may assume $\gamma=1$ without loss of generality. By change of variables, we can also assume that $\bm{d}=\bm{1}$.
    Note that if $\alpha_i\neq \alpha_j$ for some $i,j\in [n]$ (without loss of generality, we may assume $\alpha_i<\alpha_j$), then setting $z_i=1$ and $z_j=0$ and $x_i=-x_j=\lambda>0$, with all remaining variables fixed to zero, is feasible for both problems; moreover, both problems are unbounded by letting $\lambda\to\infty$.

    The only remaining case to consider is thus $\gamma=1$, $\bm{d}=\bm{1}$, and $\bm{\alpha}=\eta\bm{1}$ for some $\eta\in \mathbb{R}$. Then, problem (\ref{prob:relax}) can be rewritten as
    \begin{equation}\label{prob:relax-r}
        \min~ \eta \bm{1}^\top \bm{x} + \bm{\beta}^\top \bm{z} + t \text{ s.t. } (\bm{x},\bm{z},t)\in \mathcal{\widehat X}_{\bm{d}},
    \end{equation}
    and we show that there exists an optimal solution of \eqref{prob:relax-r} that is feasible for \eqref{prob:mip} with the same objective value, thus concluding the proof.
    Let $y=\bm{1}^\top \bm{x}$ and define $f:[0,1]^n \rightarrow \mathbb{R}$ as 
    \begin{eqnarray*}
        f(\bm{z}) & := & \bm{\beta}^\top \bm{z} + \min_{(y)_+,(y)_-} \eta ((y)_+ + (y)_-) +  \frac{(y)_+^2}{\min\{1,\sum_{i=1}^n z_i\}} + \frac{(y)_-^2}{\min\{1,\sum_{i=1}^n (1-z_i)\}}.
    \end{eqnarray*}
    Observe that, after projecting out variables $(y)_+, (y)_-$ (equivalently, variables $\bm{x}$), problem (\ref{prob:relax-r}) is equivalent to 
\begin{equation}\label{prob:para-relax}
        \min~  f(\bm{z}) \text{ s.t. } \bm{z}\in[0,1]^n.
    \end{equation}   

    Suppose now that $\eta \leq 0$. For each $\bm{z}\in[0,1]^n$, it is straightforward to verify that the function $f(\bm{z})$  attains its optimal value at $y^*(z) := -\frac{\eta}{2}\min\{1,\sum_{i=1}^n z_i\}$ by differentiating and setting the derivative to zero in the minimization that defines $f(\bm{z})$. Therefore,

        \begin{align*}
        f(\bm{z}) = & \;\bm{\beta}^\top \bm{z} + \min_{y\geq 0} \eta y + \frac{y^2}{\min\{1,\sum_{i=1}^n z_i\}}\\
         = & \;\bm{\beta}^\top \bm{z} - \dfrac{\eta^2}{4} \cdot \min\{1,\textstyle\sum_{i=1}^n z_i\}.
        \end{align*}

    In other words, an optimal solution of \eqref{prob:para-relax} can be obtained by solving the optimization problems 
    \begin{align*}
        \min_{\bm{z}\in [0,1]^n}\;& \bm{\beta}^\top \bm{z} - \frac{\eta^2}{4} \text{ s.t. } \sum_{i=1}^n z_i\geq 1,\text{ and }\\
    \min_{\bm{z}\in [0,1]^n}\;& \bm{\beta}^\top \bm{z} - \frac{\eta^2}{4}\sum_{i=1}^nz_i \text{ s.t. } \sum_{i=1}^n z_i\leq 1,
    \end{align*}
    and selecting the one with the best objective value. Since both of them have totally unimodular constraints and linear objectives, both have integer optimal solutions, and thus \eqref{prob:para-relax} does as well. 
        
    Let $\bar{\bm{z}}$ be a binary optimal solution to \eqref{prob:para-relax}. If $\bar{\bm{z}}=\bm{0}$, then we can set $\bar{\bm{x}}=\bm{0}$ and $t=0$. If $\bar{z}_j=1$ for some $j$, we can set $\bar{x}_j=y^*(\bar{\bm{z}})$ and $\bar{x}_i=0$ for $i\neq j$. By letting $\bar{t}= (\bm{d}^\top\bar{\bm{x}})^2$, the solution $(\bar{\bm{x}},\bar{\bm{z}},\bar{t})$ is optimal for problem (\ref{prob:relax}) and is also feasible for problem (\ref{prob:mip}). Hence, we have shown that for the case $\eta<0$, the original MIP problem (\ref{prob:mip}) and its convex relaxation (\ref{prob:relax}) share the same optimal value. A similar analysis applies to the case $\eta > 0$.
    \end{proof}



We point out that if $d_i=0$ in Proposition~\ref{propo:rank1}, then $\mathcal{\widehat X}_{\bm{d}}$ is a valid but not an ideal relaxation, as stronger relaxations can be obtained by removing variables $z_i$ from the inequalities.
We now generalize Proposition~\ref{propo:rank1} to vectors with arbitrary sign patterns. 

\begin{theorem}\label{theo:hullGen}Given any $\bm{d}\in \R^n$, set $\cl\;\conv (\mathcal{X}_{\bm{d}})$ is described by bound constraints $\bm{0}\leq \bm{z}\leq \bm{1}$ and the inequality
\small\begin{align*}
t\geq\frac{(\bm{d}^\top \bm{x})_+^2}{\min\left\{1,\displaystyle\sum_{i\in \supp_+(\bm{d})} z_i+\sum_{i\in \supp_-(\bm{d})} (1-z_i)\right\}} + \frac{(\bm{d}^\top \bm{x})_-^2}{\displaystyle\min\left\{1,\sum_{i\in \supp_-(\bm{d})}z_i+\sum_{i\in \supp_+(\bm{d})} (1-z_i)\right\}}.\end{align*}\normalsize
\end{theorem}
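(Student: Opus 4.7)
The plan is to reduce the general case to the nonnegative setting already handled by Proposition~\ref{propo:rank1} via an affine change of variables that absorbs the sign pattern of $\bm{d}$. First I would handle zero entries: for each $i\notin\supp(\bm{d})$ the variable $x_i$ does not appear in $\bm{d}^\top\bm{x}$, so it is decoupled from the nonlinear inequality, and $\mathcal{X}_{\bm{d}}$ factors as a Cartesian product between the projection onto the coordinates in $\supp(\bm{d})$ and the sets $\{(x_i,z_i)\in\R\times\{0,1\}: x_iz_i\geq 0,\; x_i(1-z_i)\leq 0\}=(\R_-\times\{0\})\cup(\R_+\times\{1\})$ for $i\notin\supp(\bm{d})$, each of whose closed convex hulls is $\R\times[0,1]$. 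So the zero indices contribute only the bound constraints $0\leq z_i\leq 1$ to $\cl\conv(\mathcal{X}_{\bm{d}})$, consistent with their absence from the sums in the theorem statement, and it suffices to prove the result assuming $d_i\neq 0$ for all $i\in[n]$.

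Next, for each $i\in\supp_-(\bm{d})$ I would apply the simultaneous substitution $x_i'=-x_i$, $z_i'=1-z_i$, leaving coordinates unchanged for $i\in\supp_+(\bm{d})$. Call the resulting affine bijection $\phi$, and set $\bm{d}'=|\bm{d}|>\bm{0}$. A direct check gives $\bm{d}^\top\bm{x}=(\bm{d}')^\top\bm{x}'$. For $i\in\supp_-(\bm{d})$ the constraint $x_iz_i\geq 0$ becomes $x_i'(1-z_i')\leq 0$ while $x_i(1-z_i)\leq 0$ becomes $x_i'z_i'\geq 0$; the two sign constraints thus trade roles but remain sign constraints of the required form. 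Hence $\phi(\mathcal{X}_{\bm{d}})=\mathcal{X}_{\bm{d}'}$, and since $\phi$ is an affine bijection, $\cl\conv(\mathcal{X}_{\bm{d}})=\phi^{-1}(\cl\conv(\mathcal{X}_{\bm{d}'}))$. Proposition~\ref{propo:rank1} then applies to $\bm{d}'>\bm{0}$ and describes $\cl\conv(\mathcal{X}_{\bm{d}'})$ as $\widehat{\mathcal{X}}_{\bm{d}'}$.

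Finally, I would pull this description back through $\phi^{-1}$. Substituting $z_i'=1-z_i$ for $i\in\supp_-(\bm{d})$ yields $\sum_{i=1}^n z_i'=\sum_{i\in\supp_+(\bm{d})}z_i+\sum_{i\in\supp_-(\bm{d})}(1-z_i)$, and symmetrically $\sum_{i=1}^n(1-z_i')=\sum_{i\in\supp_+(\bm{d})}(1-z_i)+\sum_{i\in\supp_-(\bm{d})}z_i$. Combined with $(\bm{d}')^\top\bm{x}'=\bm{d}^\top\bm{x}$, substitution into the description of $\widehat{\mathcal{X}}_{\bm{d}'}$ reproduces exactly the inequality of Theorem~\ref{theo:hullGen}. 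The main conceptual step is identifying the \emph{coupled} flip on $(x_i,z_i)$: flipping only $x_i$ would reverse the sign indicator constraints rather than preserve them, while flipping only $z_i$ would leave $\bm{d}^\top\bm{x}$ unchanged; it is the joint substitution that simultaneously aligns the quadratic term and the sign constraints with the all-positive case, after which everything becomes bookkeeping on top of Proposition~\ref{propo:rank1}.
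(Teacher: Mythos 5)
Your proof is correct and follows essentially the same route as the paper: reduce to the all-positive case of Proposition~\ref{propo:rank1} via the coupled substitution $x_i\mapsto -x_i$, $z_i\mapsto 1-z_i$ on $\supp_-(\bm{d})$, then translate the description back. Your treatment of the zero entries of $\bm{d}$ via the Cartesian-product factorization is a more careful justification of a step the paper dispatches in one sentence, but it is the same argument in substance.
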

\begin{proof}
If $\bm{d}>\bm{0}$, the result coincides with Proposition~\ref{propo:rank1}, and if $\bm{d}\geq \bm{0}$ then the result holds since taking sums over the support of $\bm{d}$ removes the unused components of $\bm{z}$. For general $\bm{d}$, consider the change of variables: $x_i= -x'_i$, $z_i= 1-z'_i$  for all $i\in \supp_-(\bm{d})$, and change the signs of $d_i$ accordingly to recover an equivalent problem with $\bm{d}\geq \bm{0}$. We obtain the result by applying Proposition~\ref{propo:rank1} and changing back the variables to their original definitions.
\end{proof}

\paragraph{Connections with earlier works} As pointed out in \S\ref{sec:litReview}, the set $\mathcal{X}_{\bm{Q}}$ is connected to sets with indicator variables previously studied in the literature. Remark~\ref{rem:1dim} below discusses the special case where $n=1$, where the results of Theorem~\ref{theo:hullGen} can indeed be interpreted as a direct application of the perspective reformulation in an extended formulation. However, in Remark~\ref{rem:2+dim} we show that, in general, the descriptions of the convex hull are advantageous compared with existing alternatives.

\begin{remark}[Case $n=1$]
\label{rem:1dim}
Define set $\mathcal{X}_{1}=\{(x,z,t)\in\mathbb{R}\times\{0,1\}\times\mathbb{R}: t\geq x^2,~xz\geq 0,~x(1-z)\leq 0\}.$  
Using the extended formulation in \eqref{eq:extendedReformulation}, set $\mathcal{X}_{1}$ can be rewritten as \small$$\mathcal{X}_1=\{(x,z,t)\in\mathbb{R}\times\{0,1\}\times\mathbb{R}:\exists x^+, x^-\in \R_+ \text{ such that } t\geq (x^+-x^-)^2,~x^-z= 0,~x^+(1-z)= 0, x=x^+-x^-\}.$$\normalsize
Then noting that for any feasible point $(x^+-x^-)^2=(x^+)^2+(x^-)^2$ and using the perspective reformulation \eqref{eq:indPerspective}, we obtain the valid inequality 
$t\geq \frac{(x)_+^2}{z}+\frac{(x)_-^2}{1-z}$ and $0\leq z\leq 1$. Theorem~\ref{theo:hullGen} states that this valid inequality is sufficient to describe the convex hull.
\end{remark}

\begin{remark}[General case]
\label{rem:2+dim}
 Using the extended formulation in \eqref{eq:extendedReformulation}, set $\mathcal{X}_{\bm{d}}$ can be rewritten as \small\begin{align*}
    \mathcal{X}_{\bm{d}}:=\Big\{(\bm{x},\bm{z},t)\in\mathbb{R}^n\times\{0,1\}^n\times\mathbb{R}:
    &\exists \bm{x^-,\bm{x^+}}\in \R_+^n \text{ such that } t\geq \left(\bm{d^\top x^+}-\bm{d^\top x^-}\right)^2,\notag\\ &\bm{x}=\bm{x^+}-\bm{x^-},\; ~x_i^-z_i= 0,~x_i^+(1-z_i)= 0, ~i=1,\ldots,n\Big\}.
\end{align*}\normalsize
In other words, set $\mathcal{X}_{\bm{d}}$ can be written as the epigraph of a rank-one quadratic function with indicator variables and nonnegative constraints, a class of sets that has been studied in the literature. Nonetheless, the resulting strengthening is substantially more complicated and potentially weaker as well:
\begin{itemize}
\item  The strengthening described in proposition 7 of \citet{atamturk2023supermodularity}, in a space of variables including $\bm{x^+}$ and $\bm{x^-}$ but no additional variables, is described as a piecewise conic quadratic function with a factorial number of pieces ($(2n)!+1$) that cannot be directly implemented with off-the-shelf solvers.
\item The conic quadratic strengthenings described in proposition 3 of \citet{han2024compact} or theorem 5 of \citet{shafiee2024constrained} require introducing additional $4n$ variables, in addition to $\bm{x^+}$ and $\bm{x^-}$, and $\mathcal{O}(n)$ constraints. 
\end{itemize}
We see that the formulation in Theorem~\ref{theo:hullGen} is simpler, as it does not require additional variables (including $\bm{x^+}$ and $\bm{x^-}$). 
\end{remark}

\paragraph{The one-sided relaxation}
We close this section by considering the one-sided relaxation \eqref{eq:oneSidedFirst} with $\bm{Q}$ rank-one, that is,
\begin{equation*}
    \mathcal{X}_{\bm{d}}^-=\{(\bm{x},\bm{z},t)\in\mathbb{R}^n\times\{0,1\}^n\times\mathbb{R}:t\geq (\bm{d}^\top \bm{x})^2,~ x_i(1-z_i)\leq 0,~i=1,\ldots,n\}.
\end{equation*}
As Proposition~\ref{corr:oneSided} below states, the only non-trivial case is obtained when all components of $\bm{d}$ have the same sign.
\begin{proposition} \label{corr:oneSided}
\begin{itemize}
    \item [(a)] If $\bm{d} \geq \bm{0}$ (equivalently, if $\bm{d} \leq \bm{0}$), then
$$\cl\; \conv(\mathcal{X}_{\bm{d}}^-)=\Cbracket{(\bm{x},\bm{z},t)\in\mathbb{R}^n\times[0,1]^n\times\mathbb{R}:t\geq\frac{(\bm{d}^\top \bm{x})_+^2}{\min\
\Cbracket{1,\sum_{i\in \supp(\bm{d})}z_i }} + (\bm{d}^\top \bm{x})_-^2}.$$
    \item [(b)] Otherwise, if $d_id_j<0$ for some indexes $i,j\in [n]$, then $$\cl\; \conv(\mathcal{X}_{\bm{d}}^-)=\Cbracket{(\bm{x},\bm{z},t)\in\mathbb{R}^n\times[0,1]^n\times\mathbb{R}:t\geq(\bm{d^\top x})^2}.$$

\end{itemize}
\end{proposition}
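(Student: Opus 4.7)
The plan is to adapt the linear-functional argument from Proposition~\ref{propo:rank1}: for each part, I will first verify that the proposed right-hand side is a valid relaxation of $\mathcal{X}_{\bm{d}}^-$, and then establish equality by matching the infimum of any linear functional $\bm{\alpha}^\top\bm{x}+\bm{\beta}^\top\bm{z}+\gamma t$ over both sets. As in the earlier proof, the only substantive case is $\gamma=1$; the cases $\gamma<0$ and $\gamma=0$ are either unbounded on both sides or trivial in each part.

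For part (a), it suffices to treat $\bm{d}\geq\bm{0}$, as $\bm{d}\leq\bm{0}$ follows by negating $\bm{x}$. Validity will come from a case split: if $\sum_{i\in\supp(\bm{d})}z_i\geq 1$, the inequality collapses to $t\geq(\bm{d}^\top\bm{x})^2$; if $\sum_{i\in\supp(\bm{d})}z_i=0$, then $x_i\leq 0$ for all $i\in\supp(\bm{d})$, so $\bm{d}^\top\bm{x}\leq 0$ and $(\bm{d}^\top\bm{x})_+=0$, reducing the inequality again to $t\geq(\bm{d}^\top\bm{x})^2$. For tightness, I will change variables so that $\bm{d}=\bm{1}$ on $S:=\supp(\bm{d})$, rule out unboundedness by requiring $\alpha_i=0$ for $i\notin S$ and $\bm{\alpha}_S=\eta\bm{1}$, and then project out $\bm{x}$ via the first-order condition. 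The resulting problem is to minimize $\bm{\beta}^\top\bm{z}-(\eta^2/4)\min\{1,\sum_{i\in S}z_i\}$ (when $\eta\leq 0$) or $\bm{\beta}^\top\bm{z}-\eta^2/4$ (when $\eta>0$) over $\bm{z}\in[0,1]^n$. Splitting the former into the cases $\sum_{i\in S}z_i\leq 1$ and $\sum_{i\in S}z_i\geq 1$ yields two LPs whose constraint matrices are totally unimodular, so the optima are attained at binary $\bm{z}^*$; from any such $\bm{z}^*$ I can construct an $\bm{x}$ with $\bm{1}^\top\bm{x}_S$ equal to the optimal $u$ and $x_i\leq 0$ whenever $z_i^*=0$, giving a feasible point of $\mathcal{X}_{\bm{d}}^-$ with the matching objective value.

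For part (b), the inclusion $\clconv(\mathcal{X}_{\bm{d}}^-)\subseteq\{(\bm{x},\bm{z},t):t\geq(\bm{d}^\top\bm{x})^2,\,\bm{z}\in[0,1]^n\}$ is immediate. For the reverse, if $\bm{\alpha}$ is not in the span of $\bm{d}$, both infima are $-\infty$: over the right-hand side by moving along the kernel of $\bm{d}$, and over $\mathcal{X}_{\bm{d}}^-$ by first setting $\bm{z}=\bm{1}$ so that all sign constraints vanish. If $\bm{\alpha}=\kappa\bm{d}$, then with $u=\bm{d}^\top\bm{x}$ the objective becomes $\kappa u+\bm{\beta}^\top\bm{z}+u^2$, so the infimum over the right-hand side equals $-\kappa^2/4+\sum_i\min\{\beta_i,0\}$, attained at $u^*=-\kappa/2$ and $z_i^*=\mathbbm{1}_{\{\beta_i<0\}}$. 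The key step is to show this same value is attained in $\mathcal{X}_{\bm{d}}^-$ for the binary $\bm{z}^*$. Using the mixed-sign hypothesis, pick $i^*\in\supp_+(\bm{d})$ and $j^*\in\supp_-(\bm{d})$; for any target $u^*$, set $x_k=0$ for $k\notin\{i^*,j^*\}$ (which respects any $x_k\leq 0$ requirement from $z_k^*=0$) and choose $x_{i^*},x_{j^*}\leq 0$ with $d_{i^*}x_{i^*}+d_{j^*}x_{j^*}=u^*$. Since $d_{i^*}x_{i^*}$ ranges over $(-\infty,0]$ while $d_{j^*}x_{j^*}$ ranges over $[0,\infty)$, their sum realizes every real value, so $u^*$ is achievable regardless of the choice of binary $\bm{z}^*$.

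The main obstacle will be this decoupling argument in part (b): verifying that the mixed-sign hypothesis genuinely separates the binary $\bm{z}$-optimization from the continuous $\bm{x}$-optimization, so that the minimum $\sum_i\min\{\beta_i,0\}$ of $\bm{\beta}^\top\bm{z}$ over $\{0,1\}^n$ can be attained simultaneously with $\bm{d}^\top\bm{x}=u^*$ under the sign constraints. This decoupling is precisely what fails in part (a): when all nonzero entries of $\bm{d}$ share a sign, producing a positive value of $\bm{d}^\top\bm{x}$ requires at least one $z_i=1$ on $\supp(\bm{d})$, which is exactly what forces the $\min\{1,\sum_{i\in\supp(\bm{d})}z_i\}$ denominator to persist there.
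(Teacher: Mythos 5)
Your proposal is correct. Part (a) is essentially the paper's own argument (given in Appendix~\ref{sec:proofOneSided}): reduce to linear objectives, dismiss the unbounded/trivial cases, restrict to $\bm{\alpha}=\eta\bm{d}$, project out $\bm{x}$ via the first-order condition, and observe that the resulting problem in $\bm{z}$ splits into totally unimodular LPs with integral optima from which a feasible point of $\mathcal{X}_{\bm{d}}^-$ is reconstructed; your handling of the sign of the optimal $y^*$ (negative when $\eta>0$, hence realizable with $x_j\leq 0$ for any binary $\bm{z}^*$; nonnegative when $\eta\leq 0$, hence requiring some $z_j^*=1$ unless $y^*=0$) matches the appendix. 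Part (b) is where you genuinely diverge: the paper proves the reverse inclusion by a primal construction, writing each point of the candidate set with $\bm{d^\top\bar x}\leq 0$ as a limit of convex combinations of a point with $\bm{z}=\bm{1}$ (where the sign constraints vanish) and a point with $\bm{x}\leq\bm{0}$, and symmetrically for $\bm{d^\top\bar x}>0$. You instead match support functions, and the heart of your argument is the decoupling observation that with $i^*\in\supp_+(\bm{d})$ and $j^*\in\supp_-(\bm{d})$ one can realize any value of $\bm{d^\top x}$ using only nonpositive coordinates $x_{i^*},x_{j^*}\leq 0$, which are feasible for every binary $\bm{z}$; this correctly shows the optimal $u^*=-\kappa/2$ and the optimal binary $\bm{z}^*$ are simultaneously attainable in $\mathcal{X}_{\bm{d}}^-$, so the infima coincide and the closed convex hulls agree. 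The paper's route is more constructive (it exhibits the convexifying points explicitly, at the cost of a limiting argument with a recession direction); yours is uniform with part (a) and makes transparent exactly why the mixed-sign hypothesis kills the denominator, which is a nice complement to the explicit construction.
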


\begin{proof} The proof of \textit{(a)} follows from identical reasoning to the proof of Proposition~\ref{propo:rank1}, and we defer it to Appendix~\ref{sec:proofOneSided}. 
For part $\textit{(b)}$, we assume without loss of generality that $d_1>0$ and $d_2<0$. We first note that since any point $(\bm{x},\bm{z},t)$ with $\bm{x}\leq 0$, $\bm{z}\in \{0,1\}^n$ and $t\geq (\bm{d^\top x})^2$ is an element of $\mathcal{X}_{\bm{d}}^-$, we can easily conclude that points satisfying $\bm{ x}\leq 0$, $\bm{0}\leq \bm{ z}\leq \bm{1}$ and $t\geq (\bm{d^\top x})^2$ belong to $\cl\; \conv (\mathcal{X}_{\bm{d}}^-)$. 

Then, observe that for any point $(\bm{\bar x},\bm{\bar z},t)$ with $\bar t\geq (\bm{d^\top \bar x})^2$, $\bm{0}\leq \bm{z}\leq \bm{1}$ and $\bm{d^\top \bar x}\leq 0$, we find that 
$$(\bm{\bar x},\bm{\bar z},\bar t)=\lim_{\lambda\to 0_+}\lambda\underbrace{\left(\frac{\bm{\bar x}}{\lambda}-\left(\frac{\bm{d^\top \bar x}}{d_1\lambda }\right)\bm{e_1},\bm{1},0\right)}_{=(\bm{x^1},\bm{z^1},t^1)}+(1-\lambda)\underbrace{\left(\left(\frac{\bm{d^\top \bar x}}{d_1}\right)\bm{e_1},\bm{\bar z},\bar t\right)}_{=(\bm{x^2},\bm{z^2},t^2)},$$
where $\bm{e_1}$ is first standard basis vector of $\R^n$. Moreover, $(\bm{x^1},\bm{z^1},t^1)\in \mathcal{X}_{\bm{d}}^-$ since $\bm{z^1}=\bm{1}$ imposes no constraints on $\bm{x^1}$ and $\bm{d^\top x^1}=0$; and $(\bm{x^2},\bm{z^2},t^2)\in \text{cl conv}(\mathcal{X}_{\bm{d}}^-)$ since $\bm{x^2}\leq \bm{0}$ and $\bm{0}\leq \bm{z^2}\leq \bm{1}$. Thus, $(\bm{\bar x},\bm{\bar z},\bar t)\in \cl\;\conv(\mathcal{X}_{\bm{d}}^-)$.
If $\bm{d^\top \bar x}>0$, a similar argument can be repeated using $\bm{e_2}$ instead of $\bm{e_1}$.
\end{proof}

\section{Valid inequalities in the extended formulation}\label{sec:validExtended}
While ideal formulations for $\mathcal{X}_{\bm{Q}}$ with general matrices may be difficult to obtain, we now discuss how to construct relaxations based on the rank-one convexification, using set $\bar{\mathcal{X}}$ defined in \eqref{eq:defBarX}. First, we point out that sets $\clconv(\mathcal{X}_{\bm{Q}})$ and $\clconv(\bar{\mathcal{X}})$ are closely connected, see Proposition~\ref{prop:connection} below. Indeed, on the one hand, $\clconv(\mathcal{X}_{\bm{Q}})$ is a projection of $\clconv(\bar{\mathcal{X}})$ in a lower-dimensional space, and
on the other hand, $\clconv(\bar{\mathcal{X}})$ can be interpreted as the intersection of (infinitely many) sets of the form $\clconv(\mathcal{X}_{\bm{Q}})$.

\begin{proposition}\label{prop:connection}
The following two statements hold true:
\begin{enumerate}[label=(\alph*)]
\item Given a point $(\bm{x},\bm{z},t)\in \R^{2n+1}$,  $(\bm{x},\bm{z},t)\in \conv(\mathcal{X}_{\bm{Q}})$ if and only if there exists $\bm{X}\in \R^{n\times n} \text{ such that }t\ge \langle \bm{Q},\bm{X}\rangle \text{ and } (\bm{x},\bm{X},\bm{z})\in \conv(\bar{\mathcal{X}})$.
\item Given a point $(\bm{x},\bm{X},\bm{z})\in \R^{2n+n^2}$, $(\bm{x},\bm{X},\bm{z})\in \clconv(\bar{\mathcal{X}})$ if and only if for all $\bm{Q}\in \mathcal{S}_+^n$ there exists  $t\in \R \text{ such that } t=\langle \bm{Q},\bm{X}\rangle \text{ and } (\bm{x},\bm{z},t)\in \clconv(\mathcal{X}_{\bm{Q}})$.
\end{enumerate}
\end{proposition}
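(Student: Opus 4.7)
The plan is to prove parts (a) and (b) in order, exploiting the fact that part (b) reduces via a separation argument to part (a).

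\textbf{Part (a).} For the ``only if'' direction, write $(\bm{x},\bm{z},t)=\sum_{k}\lambda_k(\bm{x}^k,\bm{z}^k,t^k)$ as a finite convex combination of points of $\mathcal{X}_{\bm{Q}}$, and lift each atom by setting $\bm{X}^k:=\bm{x}^k(\bm{x}^k)^\top$, which places $(\bm{x}^k,\bm{X}^k,\bm{z}^k)$ in $\bar{\mathcal{X}}$ with equality in the PSD constraint. Averaging yields $(\bm{x},\bm{X},\bm{z})\in\conv(\bar{\mathcal{X}})$ and $\langle\bm{Q},\bm{X}\rangle=\sum_k\lambda_k(\bm{x}^k)^\top\bm{Q}\bm{x}^k\le\sum_k\lambda_k t^k=t$. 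For the ``if'' direction, start from a convex combination of atoms in $\bar{\mathcal{X}}$; since $\bm{Q}\succeq\bm{0}$ and $\bm{X}^k-\bm{x}^k(\bm{x}^k)^\top\succeq\bm{0}$, the standard inner-product inequality for PSD matrices gives $\langle\bm{Q},\bm{X}^k\rangle\ge(\bm{x}^k)^\top\bm{Q}\bm{x}^k$, so setting $t^k:=\langle\bm{Q},\bm{X}^k\rangle$ places each atom in $\mathcal{X}_{\bm{Q}}$. The averaged point is $(\bm{x},\bm{z},\langle\bm{Q},\bm{X}\rangle)$; the slack $t-\langle\bm{Q},\bm{X}\rangle\ge 0$ is absorbed by increasing one $t^k$, since $\mathcal{X}_{\bm{Q}}$ is closed upward in $t$.

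\textbf{Part (b), forward direction.} Approximate $(\bm{x},\bm{X},\bm{z})\in\clconv(\bar{\mathcal{X}})$ by a sequence $(\bm{x}^j,\bm{X}^j,\bm{z}^j)\in\conv(\bar{\mathcal{X}})$, apply part (a) to each element with $t^j:=\langle\bm{Q},\bm{X}^j\rangle$, and take limits to conclude $(\bm{x},\bm{z},\langle\bm{Q},\bm{X}\rangle)\in\clconv(\mathcal{X}_{\bm{Q}})$.

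\textbf{Part (b), backward direction --- the main obstacle.} I would argue by contradiction via separation. Suppose $(\bar{\bm{x}},\bar{\bm{X}},\bar{\bm{z}})\notin\clconv(\bar{\mathcal{X}})$; then separation in $\R^n\times\R^{n\times n}\times\R^n$ produces $\bm{c},\bm{e}\in\R^n$, symmetric $\bm{B}\in\R^{n\times n}$, and $\delta\in\R$ satisfying
\[
\bm{c}^\top\bm{x}'+\langle\bm{B},\bm{X}'\rangle+\bm{e}^\top\bm{z}'\ge\delta\;\;\forall(\bm{x}',\bm{X}',\bm{z}')\in\bar{\mathcal{X}},\qquad\bm{c}^\top\bar{\bm{x}}+\langle\bm{B},\bar{\bm{X}}\rangle+\bm{e}^\top\bar{\bm{z}}<\delta.
\]
The key observation is that $\bm{B}\succeq\bm{0}$: since $(\bm{x}',\bm{X}'+\bm{M},\bm{z}')\in\bar{\mathcal{X}}$ for every $\bm{M}\succeq\bm{0}$, the lower-bound inequality forces $\langle\bm{B},\bm{M}\rangle\ge 0$ on the entire PSD cone, and self-duality of $\mathcal{S}_+^n$ yields $\bm{B}\succeq\bm{0}$. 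Setting $\bm{Q}:=\bm{B}$ and invoking the hypothesis produces $(\bar{\bm{x}},\bar{\bm{z}},\langle\bm{Q},\bar{\bm{X}}\rangle)\in\clconv(\mathcal{X}_{\bm{Q}})$. To close the contradiction I specialize the hyperplane to $\mathcal{X}_{\bm{Q}}$-atoms: for any $(\bm{x}',\bm{z}',t')\in\mathcal{X}_{\bm{Q}}$, the lift $\bm{X}':=\bm{x}'(\bm{x}')^\top$ lies in $\bar{\mathcal{X}}$ and satisfies $\langle\bm{B},\bm{X}'\rangle=(\bm{x}')^\top\bm{Q}\bm{x}'\le t'$, so the hyperplane implies $\bm{c}^\top\bm{x}'+t'+\bm{e}^\top\bm{z}'\ge\delta$ on $\mathcal{X}_{\bm{Q}}$ and, by convexity and continuity, on $\clconv(\mathcal{X}_{\bm{Q}})$. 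Evaluating at $(\bar{\bm{x}},\bar{\bm{z}},\langle\bm{Q},\bar{\bm{X}}\rangle)$ yields $\bm{c}^\top\bar{\bm{x}}+\langle\bm{B},\bar{\bm{X}}\rangle+\bm{e}^\top\bar{\bm{z}}\ge\delta$, contradicting the strict separation.
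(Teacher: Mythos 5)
Your proof is correct. Part (a) is essentially the paper's own argument (the paper phrases it as ``convex hull commutes with projection'' for the lifted set $\tilde{\mathcal{X}}=\{(\bm{x},\bm{X},\bm{z},t):(\bm{x},\bm{X},\bm{z})\in\bar{\mathcal{X}},\,t\ge\langle\bm{Q},\bm{X}\rangle\}$, while you unpack the same lift/project idea into explicit convex combinations; both hinge on the two facts that $\bm{X}=\bm{x}\bm{x}^\top$ is a feasible lift and that $\bm{Q}\succeq\bm{0}$, $\bm{X}\succeq\bm{x}\bm{x}^\top$ imply $\langle\bm{Q},\bm{X}\rangle\ge\bm{x}^\top\bm{Q}\bm{x}$). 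For part (b) you take a genuinely different, dual-side route: the paper proves equality of the two closed convex sets by showing their support functions agree --- it minimizes an arbitrary linear objective $\bm{a}^\top\bm{x}+\bm{c}^\top\bm{z}+\langle\bm{\Sigma},\bm{X}\rangle$ over both sets and reduces both problems to the same mixed-integer program when $\bm{\Sigma}\in\mathcal{S}_+^n$ (both being unbounded otherwise) --- whereas you argue by strict separation of a point from $\clconv(\bar{\mathcal{X}})$ and then transport the separating hyperplane to $\clconv(\mathcal{X}_{\bm{B}})$. The two arguments are dual manifestations of the same fact, and your key observation that the matrix coefficient $\bm{B}$ of any valid inequality must be positive semidefinite (because $\bar{\mathcal{X}}$ recedes along $\mathcal{S}_+^n$ in the $\bm{X}$ block) is exactly the counterpart of the paper's remark that the linear program is unbounded when $\bm{\Sigma}\notin\mathcal{S}_+^n$. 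What your version buys is a direct geometric explanation of \emph{why} it suffices to impose the condition for PSD matrices only, and it avoids the paper's slightly delicate bookkeeping about which optimization problem relaxes which; what the paper's version buys is that it doubles as a statement about objective values, which is the form actually used downstream for building relaxations.
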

\begin{proof}
To prove part (a), define 
\[ \tilde{\mathcal{X}}=\left\{ (\bm x,\bm X,\bm z,t):\; (\bm x,\bm X,\bm z)\in\bar{\mathcal{X}},\;t\ge\innerProd{\bm Q}{\bm X} \right\}. \]Then one has $\mathcal{X}_{\bm Q}=\proj_{\bm x,\bm z, t}\tilde{\mathcal{X}}$ which represents the projection of $\tilde{\mathcal{X}}$ onto the space of $(\bm x,\bm z,t)$.  We immediately find that 
\begin{align*}
    \conv(\mathcal{X}_{\bm Q})=&\conv\proj\nolimits_{\bm x,\bm z,t}\tilde{\mathcal{X}}\\
    =& \proj\nolimits_{\bm x,\bm z,t}\conv\tilde{\mathcal{X}}\\
    =&\proj\nolimits_{\bm x,\bm z,t}\left\{(\bm x,\bm X, \bm z,t ):\,(\bm x,\bm X,\bm z)\in\conv\left(\bar{\mathcal{X}}\right),\;t\ge\innerProd{\bm Q}{\bm X}   \right\},
\end{align*}
which proves the first statement.

To prove part (b), we consider minimization of a linear function over the two sets involved, that is,
\begin{align}
&\min_{\bm{x},\bm{X},\bm{z}}\; \bm{a^\top x}+\bm{c^\top z}+\langle \bm{\Sigma},\bm{X}\rangle \text{  s.t.  }(\bm{x},\bm{X},\bm{z})\in \clconv(\bar{\mathcal{X}}),\text{ and}\label{eq:opt1}\\
&\min_{\bm{x},\bm{X},\bm{z}}\; \bm{a^\top x}+\bm{c^\top z}+\langle \bm{\Sigma},\bm{X}\rangle \text{  s.t.  }(\bm{x},\bm{z},\langle \bm{Q},\bm{X}\rangle)\in \cl\; \conv(\mathcal{X}_{\bm{Q}})\text{ for all }\bm{Q}\in \mathcal{S}_+^n\label{eq:opt2},
\end{align}
and show that both problems have the same optimal solutions and objective values.

First note that if $\bm{\Sigma}\not\in \mathcal{S}_+^n$, then both problems are unbounded. Indeed, it is feasible for both problems to set $\bm{X}=\bm{xx^\top}+\gamma\bm{vv^\top}$ where $\gamma\geq 0$ and $\bm{v}$ is the eigenvector associated with the minimum eigenvalue of $\bm{\Sigma}$; letting $\gamma\to\infty$, we find feasible solutions with arbitrarily small objective values. 

If $\bm{\Sigma}\in \mathcal{S}_+^n$, then there exists an optimal solution of \eqref{eq:opt1} that is also optimal for 
\begin{align}
\min_{\bm{x}\in \R^n,\bm{z}\in \{0,1\}}\;&\bm{x^\top \Sigma x}+\bm{a^\top x}+\bm{c^\top z} \text{ s.t. }x_iz_i\geq 0,\; x_i(1-z_i)\leq 0,\; \forall i\in [n]\label{eq:equivMip}
\end{align}
with same objective value.
Note that problem \eqref{eq:opt2} is a relaxation of \eqref{eq:equivMip}, since any feasible solution $(\bm{x},\bm{z})$ of \eqref{eq:opt2} yields a feasible solution for \eqref{eq:equivMip} by setting $\bm{X}=\bm{xx^\top}$, with the same objective value. 
Now consider the further relaxation of \eqref{eq:opt2} where constraints for all matrices $\bm{Q}\neq \bm{\Sigma}$ are dropped:
\begin{align*}
&\min_{\bm{x},\bm{X},\bm{z}}\;\langle{\bm{\Sigma},\bm{X}}\rangle+\bm{a^\top x}+\bm{c^\top z} \text{ s.t. }(\bm{x},\bm{z}, \langle \bm{\Sigma},\bm{X}\rangle)\in \clconv(\mathcal{X}_{\bm{\Sigma}})\\
=&\min_{(\bm{x},\bm{z},t)\in \clconv(\mathcal{X}_{\bm{\Sigma}})}\;t+\bm{a^\top x}+\bm{c^\top z}=\min_{(\bm{x},\bm{z},t)\in \mathcal{X}_{\bm{\Sigma}}}\;t+\bm{a^\top x}+\bm{c^\top z},
\end{align*}
which reduces to \eqref{eq:equivMip}.
In conclusion, we find that \eqref{eq:opt2} is equivalent to \eqref{eq:equivMip} and to \eqref{eq:opt1}.
\end{proof}

Thus, a valid relaxation of $\mathcal{\bar X}$ based on the results of $\S\ref{sec:hull}$ can be obtained by relaxing the second condition in Proposition~\ref{prop:connection}, to hold for rank-one matrices only. We summarize this relaxation in the following corollary.

\begin{corollary}[Validity]\label{cor:validGenForm}
    If $(\bm{x},\bm{X},\bm{z})\in \mathcal{\bar X}$, then 
$\left(\bm{x},\bm{z},\langle \bm{dd^\top},\bm{X}\rangle\right)\in \clconv(\mathcal{X}_{\bm{d}})$ for all $\bm{d}\in \R^n$.
\end{corollary}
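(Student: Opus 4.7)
The plan is to observe that this corollary is an essentially immediate specialization of Proposition~\ref{prop:connection}(b). That proposition characterizes $\clconv(\bar{\mathcal{X}})$ via quadratic relaxations parametrized by \emph{all} PSD matrices $\bm{Q}\in\mathcal{S}_+^n$; the corollary restricts attention to rank-one matrices and uses the hypothesis to place the point inside $\bar{\mathcal{X}}\subseteq\clconv(\bar{\mathcal{X}})$.

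Concretely, I would proceed as follows. First, for any $\bm{d}\in\R^n$, the outer product $\bm{dd^\top}$ is rank-one positive semidefinite, so $\bm{dd^\top}\in\mathcal{S}_+^n$. Second, comparing definitions, the quadratic form $\bm{x}^\top(\bm{dd^\top})\bm{x}=(\bm{d}^\top\bm{x})^2$, so the general-matrix set $\mathcal{X}_{\bm{dd^\top}}$ coincides literally with the rank-one set $\mathcal{X}_{\bm{d}}$ introduced at the start of \S\ref{sec:hull}; this is merely notational bookkeeping. Third, since the hypothesis $(\bm{x},\bm{X},\bm{z})\in\bar{\mathcal{X}}$ trivially implies $(\bm{x},\bm{X},\bm{z})\in\clconv(\bar{\mathcal{X}})$, the forward direction of Proposition~\ref{prop:connection}(b) can be invoked with $\bm{Q}=\bm{dd^\top}$ to yield $(\bm{x},\bm{z},\langle\bm{dd^\top},\bm{X}\rangle)\in\clconv(\mathcal{X}_{\bm{dd^\top}})=\clconv(\mathcal{X}_{\bm{d}})$. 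Since $\bm{d}\in\R^n$ was arbitrary, this is exactly the desired conclusion.

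The main obstacle is essentially absent here: all the substantive work resides in the proof of Proposition~\ref{prop:connection}(b), which already establishes the ``pointwise'' characterization of $\clconv(\bar{\mathcal{X}})$ through arbitrary PSD quadratic forms. This corollary simply records that the rank-one subfamily of those relaxations—whose geometry is fully characterized by Theorem~\ref{theo:hullGen}—provides a valid outer description of $\clconv(\bar{\mathcal{X}})$, which is precisely what is needed to derive the tractable strengthenings of $\bar{\mathcal{X}}$ developed in the remainder of \S\ref{sec:validExtended}.
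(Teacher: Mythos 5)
Your proposal is correct and matches the paper's intent exactly: the corollary is introduced precisely as the specialization of Proposition~\ref{prop:connection}(b) to rank-one matrices $\bm{Q}=\bm{dd^\top}$, using $\bar{\mathcal{X}}\subseteq\clconv(\bar{\mathcal{X}})$ and the identity $\mathcal{X}_{\bm{dd^\top}}=\mathcal{X}_{\bm{d}}$. (As a minor aside, an even more elementary route is available: $\bm{X}\succeq\bm{xx^\top}$ gives $\langle\bm{dd^\top},\bm{X}\rangle\geq(\bm{d^\top x})^2$, so the point already lies in $\mathcal{X}_{\bm{d}}$ itself.)
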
 

In order to simplify the notation, we limit our study of this section to a weaker relaxation, where constraints are imposed for non-negative vectors only, that is
\begin{align}\label{eq:relaxPosOnly}
\left(\bm{x},\bm{z},\langle \bm{dd^\top},\bm{X}\rangle\right)\in \clconv(\mathcal{X}_{\bm{d}})\qquad \forall\bm{d}\in \R_+^n
\end{align}
By focusing on the simpler set \eqref{eq:relaxPosOnly}, we can use the simpler relaxations in Proposition~\ref{prop:valid} instead of the more general but more cumbersome result given in Theorem~\ref{theo:hullGen}. 

A first approach to implement relaxation \eqref{eq:relaxPosOnly} is to add valid inequalities for any fixed vector $\bm{d}$, immediately leading to valid conic quadratic inequalities.
 \begin{proposition}[Valid conic quadratic inequalities]
For any $\bm{d}\geq \bm{0}$, the inequality 
\begin{align}\label{eq:validFixedD}\langle\bm{dd^\top},\bm{X}\rangle \geq \frac{(\bm{d^\top x})^2_+}{\min\{1,\sum_{i\in \supp(\bm{d})} z_i\}} + \frac{(\bm{d^\top x})^2_-}{\min\{1,\sum_{i\in \supp(\bm{d})} (1-z_i)\}}
\end{align}
is valid for $\mathcal{\bar X}$.
 \end{proposition}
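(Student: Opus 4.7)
The plan is to chain together Corollary~\ref{cor:validGenForm} and Theorem~\ref{theo:hullGen} directly, since the inequality to be proved is exactly the specialization of Theorem~\ref{theo:hullGen} to the nonnegative case, re-expressed in the extended $(\bm{x},\bm{X},\bm{z})$ space.

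First, I would take an arbitrary $(\bm{x},\bm{X},\bm{z})\in\mathcal{\bar X}$ and observe that for any $\bm{d}\in\mathbb{R}^n$, the matrix $\bm{d}\bm{d}^\top$ is positive semidefinite, so Corollary~\ref{cor:validGenForm} applies and yields the projection statement
\[
\bigl(\bm{x},\bm{z},\langle\bm{d}\bm{d}^\top,\bm{X}\rangle\bigr)\in \clconv(\mathcal{X}_{\bm{d}}).
\]
This is the key link between the extended space governed by $\mathcal{\bar X}$ and the rank-one convex hull analyzed in \S\ref{sec:hull}.

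Next, I would invoke Theorem~\ref{theo:hullGen} to describe $\clconv(\mathcal{X}_{\bm{d}})$ explicitly. Under the assumption $\bm{d}\ge\bm{0}$, one has $\supp_+(\bm{d})=\supp(\bm{d})$ and $\supp_-(\bm{d})=\emptyset$, so the two denominators in Theorem~\ref{theo:hullGen} collapse respectively to $\min\{1,\sum_{i\in\supp(\bm{d})}z_i\}$ and $\min\{1,\sum_{i\in\supp(\bm{d})}(1-z_i)\}$. Substituting $t=\langle\bm{d}\bm{d}^\top,\bm{X}\rangle$ into the resulting inequality gives exactly \eqref{eq:validFixedD}, establishing validity.

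There is essentially no obstacle here beyond matching notation: the content of the statement is the composition of two results that have already been proved. If one prefers to avoid Proposition~\ref{prop:connection}/Corollary~\ref{cor:validGenForm}, a fully self-contained alternative would start from the semidefinite constraint $\bm{X}\succeq \bm{x}\bm{x}^\top$ to get $\langle\bm{d}\bm{d}^\top,\bm{X}\rangle\ge(\bm{d}^\top\bm{x})^2$, and then verify directly that the triple $(\bm{x},\bm{z},\langle\bm{d}\bm{d}^\top,\bm{X}\rangle)$ belongs to $\mathcal{\widehat X}_{\bm{d}}$ (which by Proposition~\ref{propo:rank1} coincides with $\clconv(\mathcal{X}_{\bm{d}})$ for $\bm{d}>\bm{0}$, and extends to $\bm{d}\ge\bm{0}$ by dropping the zero components of $\bm{d}$). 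Either route is short; the work has already been done in the earlier sections.
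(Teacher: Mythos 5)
Your argument is correct and is essentially the paper's own proof: the paper likewise fixes $\bm{d}\geq\bm{0}$ in the family \eqref{eq:relaxPosOnly} (which is Corollary~\ref{cor:validGenForm} restricted to nonnegative vectors) and then substitutes the explicit description of $\clconv(\mathcal{X}_{\bm{d}})$ from Theorem~\ref{theo:hullGen}, exactly as you do. Your observation that $\supp_+(\bm{d})=\supp(\bm{d})$ and $\supp_-(\bm{d})=\emptyset$ for $\bm{d}\geq\bm{0}$ correctly collapses the two denominators to those in \eqref{eq:validFixedD}.
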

 \begin{proof}
Take the set \eqref{eq:relaxPosOnly} with $\bm{d}$ fixed and use the representation of $\clconv(\mathcal{X}_{\bm{d}})$ given in Theorem~\ref{theo:hullGen}.
 \end{proof}
 
Naturally, the quality of the relaxation induced by a single inequality~\eqref{eq:validFixedD} depends critically on the choice of vector $\bm{d}$ and can be weak in practice. As we now show, the infinite set of conic quadratic inequalities \eqref{eq:relaxPosOnly} can be written as a finite number of copositive inequalities in the same space of variables.

 \begin{proposition}\label{prop:validX}
The infinite family of valid inequalities given by $\bm{X}\succeq \bm{xx^\top}$ and 
\begin{align}\langle\bm{dd^\top},\bm{X}\rangle \geq \frac{(\bm{d^\top x})^2_+}{\min\{1,\sum_{i\in [n]} z_i\}} + \frac{(\bm{d^\top x})^2_-}{\min\{1,\sum_{i\in [n]} (1-z_i)\}}\quad \forall \bm{d}\in \R_+^n\label{eq:preCP}\end{align}
can be written compactly as $\bm{X}\succeq \bm{xx^\top}$ and the two copositive constraints 
\begin{align}
     &\begin{pmatrix}
                \sum_{i\in [n]} z_i & -\bm{x^\top}\\
                -\bm{x}                      & \bm{X}
            \end{pmatrix}\in \mathcal{C}_+^{n+1} \text{  and  }
                                  \begin{pmatrix}
                \sum_{i\in [n]} (1-z_i) & \bm{x^\top}\\
                \bm{x}                      & \bm{X}
            \end{pmatrix}\in \mathcal{C}_+^{n+1}.
            \label{eq:copConstraints}
\end{align}
 \end{proposition}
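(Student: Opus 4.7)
The plan is to prove the equivalence by unfolding the copositivity condition into a semi-infinite linear system and then optimizing out an auxiliary variable. Since $\bm{M}\in \mathcal{C}_+^{n+1}$ means $\bm{u^\top M u}\geq 0$ for every $\bm{u}\geq \bm{0}$, writing $\bm{u}=(v_0,\bm{d})$ with $v_0\in \R_+$ and $\bm{d}\in \R_+^n$, the two copositive matrices in \eqref{eq:copConstraints} translate into the semi-infinite inequalities
$$v_0^2 \sum_{i\in [n]}z_i -2v_0\bm{d^\top x}+\bm{d^\top X d}\geq 0 \;\text{ and }\; v_0^2 \sum_{i\in [n]}(1-z_i)+2v_0\bm{d^\top x}+\bm{d^\top X d}\geq 0,$$
required to hold for every $v_0\geq 0$ and $\bm{d}\geq \bm{0}$.

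For the direction from \eqref{eq:copConstraints} to \eqref{eq:preCP}, I would fix $\bm{d}\geq \bm{0}$ and minimize each quadratic as a function of $v_0\geq 0$. The unconstrained minimum of the first expression occurs at $v_0^\ast=(\bm{d^\top x})_+/\sum_i z_i$ (adopting the implicit convention $a/0=+\infty$ for $a>0$, which collapses the boundary case $\sum_i z_i=0$ into the implication $\bm{d^\top x}\leq 0$), and substituting back yields $\bm{d^\top X d}\geq (\bm{d^\top x})_+^2/\sum_i z_i$. The mirror calculation on the second semi-infinite inequality gives $\bm{d^\top X d}\geq (\bm{d^\top x})_-^2/\sum_i(1-z_i)$. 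Combining with the PSD bound $\bm{d^\top X d}\geq (\bm{d^\top x})^2=(\bm{d^\top x})_+^2+(\bm{d^\top x})_-^2$ from $\bm{X}\succeq \bm{xx^\top}$ (using $(c)_+(c)_-=0$) allows me to strengthen each denominator to its minimum with $1$. Since $(\bm{d^\top x})_+$ and $(\bm{d^\top x})_-$ cannot simultaneously be positive, the two strengthened bounds together match exactly the right-hand side of \eqref{eq:preCP}.

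For the converse, I would start from \eqref{eq:preCP} and verify the first semi-infinite inequality at an arbitrary $(v_0,\bm{d})\in \R_+^{n+1}$. The case $v_0=0$ reduces to $\bm{d^\top X d}\geq 0$, which is immediate from $\bm{X}\succeq \bm{xx^\top}\succeq \bm{0}$. For $v_0>0$, rescaling $\bm{d}\leftarrow \bm{d}/v_0$ reduces the claim to $\bm{d^\top X d}-2\bm{d^\top x}+\sum_i z_i\geq 0$ for all $\bm{d}\geq \bm{0}$; this is trivial when $\bm{d^\top x}\leq 0$, and when $\bm{d^\top x}>0$ it follows from \eqref{eq:preCP} via $\bm{d^\top X d}\geq (\bm{d^\top x})^2/\min\{1,\sum_i z_i\}$, whereby completing the square produces a nonnegative expression (either $(\sqrt{\sum_i z_i}-\bm{d^\top x}/\sqrt{\sum_i z_i})^2$ when $\sum_i z_i\leq 1$, or $(\bm{d^\top x}-1)^2+(\sum_i z_i-1)$ when $\sum_i z_i\geq 1$). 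The case $\sum_i z_i=0$ is precluded by the convention on \eqref{eq:preCP}, and the second copositive condition follows by the symmetric argument with $\bm{z}\leftrightarrow \bm{1}-\bm{z}$ and $\bm{x}\leftrightarrow -\bm{x}$.

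The main bookkeeping obstacle will be the $\min\{1,\cdot\}$ denominators in \eqref{eq:preCP}: the copositive constraints alone only supply $\sum z_i$ and $\sum(1-z_i)$ in the denominators, so it is crucial that the PSD constraint $\bm{X}\succeq \bm{xx^\top}$ (which appears in both descriptions) supplies the complementary upper bound of $1$ needed to form the minimum. The degenerate cases where these sums equal $0$ require a careful case split using the $0/0=0$ convention, but no new ideas are needed beyond that.
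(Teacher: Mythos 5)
Your proof is correct. It takes a more elementary route than the paper's: where the paper first rewrites \eqref{eq:preCP} as copositivity of $\min\{1,\sum_i z_i\}\bm{X}-\bm{xx^\top}$ over the cone $\{\bm{d}\geq\bm{0}:\bm{d^\top x}\geq 0\}$, invokes the copositive Schur complement of \cite{ping1993criteria} as a black box to lift this to the $(n+1)$-dimensional matrix with $\min\{1,\sum_i z_i\}$ in the corner, and then splits that single copositive constraint into the two in \eqref{eq:copConstraints} (discarding the piece with corner entry $1$ as implied by $\bm{X}\succeq\bm{xx^\top}$), you instead expand the $(n+1)$-dimensional copositivity directly as a semi-infinite quadratic in $(v_0,\bm{d})$ and eliminate $v_0$ by hand, which amounts to proving the needed instance of the Schur-complement lemma inline. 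Your treatment of the $\min\{1,\cdot\}$ denominators is the mirror image of the paper's decomposition: rather than splitting one copositive matrix into two, you combine the bound $\bm{d^\top Xd}\geq(\bm{d^\top x})_+^2/\sum_i z_i$ from the copositive constraint with $\bm{d^\top Xd}\geq(\bm{d^\top x})^2$ from the PSD constraint to reconstruct the minimum; this is a clean observation and your remark that the two terms of \eqref{eq:preCP} cannot be simultaneously nonzero correctly turns the two separate lower bounds into the stated sum. The case analysis (the $v_0=0$ and $\sum_i z_i=0$ boundaries, the completing-the-square split at $\sum_i z_i\lessgtr 1$) is handled correctly under the paper's $0/0=0$ convention. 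What your version buys is self-containedness --- no external lemma is needed --- at the cost of slightly more explicit bookkeeping; the paper's version buys brevity and makes the structural role of the copositive Schur complement visible, which it reuses in Proposition~\ref{prop:decomposition}.
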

 \begin{proof} 
     Notice that the inequalities in (\ref{eq:preCP}) can be expressed equivalently by\begin{subequations}
    \begin{align}
        & \min\{1,\textstyle\sum_{i\in [n]}z_i\}\cdot\left(\bm{d^\top X d}\right) \geq (\bm{d}^\top \bm{x})^2_+ &  \forall \bm{d}\in \R_+^n,\label{eq:plusIneq}\\
        & \min\{1,\textstyle\sum_{i\in [n]} (1-z_i)\}\cdot\left(\bm{d^\top X d}\right) \geq (\bm{d}^\top \bm{x})^2_- & \forall\bm{d}\in \R_+^n. \label{eq:minusIneq}
    \end{align}
\end{subequations} 
Moreover, (\ref{eq:plusIneq}) can be rewritten as 
\begin{equation}\label{eq:plusPreSchur}
    \begin{array}{ccc}
    \displaystyle\bm{d^\top} \Cparen{\textstyle \min\left\{1,\sum_{i\in [n]} z_i\right\}\bm{X}-\bm{x}\bm{x}^\top}\bm{d}\geq 0\quad \forall \bm{d}\in \R_+^n \text{ such that } \bm{d^\top} (-\bm{x})\leq 0,
    \end{array}
\end{equation}
as $\bm{X}\succeq \bm{0}$ ensures that the inequality is automatically satisfied if $\bm{d^\top x}\leq 0$.
 It follows from the copositive Schur complement \cite{ping1993criteria} that \eqref{eq:plusPreSchur} is equivalent to
\begin{equation}\label{eq:plusWithMin}
    \begin{pmatrix} \min\left\{1,\sum_{i\in [n]} z_i\right\} & -\bm{x^\top} \\ -\bm{x} & \bm{X}\end{pmatrix} \in \mathcal{C}_+^{n+1}.    
\end{equation}
Inequality \eqref{eq:plusWithMin} can be decomposed into $\begin{pmatrix} 1 & -\bm{x^\top} \\ -\bm{x} & \bm{X}\end{pmatrix}\in \mathcal{C}_+^{n+1}$ and $\begin{pmatrix} \sum_{i\in [n]} z_i & -\bm{x^\top} \\ -\bm{x} & \bm{X}\end{pmatrix}\in \mathcal{C}_+^{n+1}$, and the first one can be omitted as it is already implied by inequality $\bm{X}\succeq \bm{xx^\top}$.
With identical arguments, we conclude that \eqref{eq:minusIneq} can be expressed as
    $\begin{pmatrix} \sum_{i\in [n]} (1-z_i) & \bm{x^\top} \\ \bm{x} & \bm{X}\end{pmatrix}\in \mathcal{C}_+^{n+1},$
concluding the proof.
 \end{proof}

Observe that relaxation \eqref{eq:preCP} is weaker than \eqref{eq:relaxPosOnly}, since the sums of the elements of $\bm{z}$ are taken over all indexes in $[n]$ rather than over the support $\supp(\bm{d})$. A more general family of valid inequalities is described in Corollary~\ref{corr:validXSupport} below.

 \begin{corollary}[Valid copositive inequalities]\label{corr:validXSupport}
Given any $S\subseteq [n]$, the two copositive inequalities
\begin{align*}
     &\begin{pmatrix}
                \sum_{i\in S} z_i & -\bm{x_S^\top}\\
                -\bm{x_S}                      & \bm{X_S}
            \end{pmatrix}\in \mathcal{C}_+^{|S|+1} \text{  and  }
                                  \begin{pmatrix}
                \sum_{i\in S} (1-z_i) & \bm{x_S^\top}\\
                \bm{x_S}                      & \bm{X_S}
            \end{pmatrix}\in \mathcal{C}_+^{|S|+1} 
\end{align*}
are valid for $\mathcal{\bar X}$. Moreover, constraints \eqref{eq:relaxPosOnly} are equivalent to imposing the copositive inequalities for all sets $S\subseteq [n]$.
 \end{corollary}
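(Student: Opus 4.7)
The plan is to mirror the derivation of Proposition~\ref{prop:validX} in the lower-dimensional subspace indexed by $S$, and then establish the equivalence by inverting the copositive Schur complement step.

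For \emph{validity}, I would start from \eqref{eq:relaxPosOnly} restricted to vectors $\bm{d}\in\R_+^n$ with $\supp(\bm{d})\subseteq S$. For any such $\bm{d}$ one has $\bm{d^\top x}=\bm{d_S^\top x_S}$ and $\langle \bm{dd^\top},\bm{X}\rangle=\bm{d_S^\top X_S d_S}$, and since $\bm{z}\in[0,1]^n$, $\sum_{i\in S} z_i\geq \sum_{i\in\supp(\bm{d})} z_i$ (and analogously for $1-z_i$). Consequently, the valid inequality from Theorem~\ref{theo:hullGen} remains valid after replacing the denominators by the corresponding sums over $S$. Applying the copositive Schur complement exactly as in Proposition~\ref{prop:validX} then collapses the entire $|S|$-parameter family of quadratic inequalities into the stated $(|S|+1)\times(|S|+1)$ copositive constraints. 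The ``$1$-piece'' of the $\min$-decomposition is redundant because $\bm{X}\succeq \bm{xx^\top}$ passes to the principal submatrix, giving $\bm{X_S}\succeq \bm{x_S x_S^\top}$.

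For the equivalence, the ``$\Rightarrow$'' direction is exactly the preceding validity argument. For ``$\Leftarrow$'', fix any $\bm{d}\in\R_+^n$ and set $S=\supp(\bm{d})$. Testing the first copositive constraint against vectors of the form $(t,\bm{d_S})$ with $t\geq 0$ yields
\[
t^2\textstyle\sum_{i\in S} z_i - 2t\,\bm{d_S^\top x_S} + \bm{d_S^\top X_S d_S}\;\geq\; 0\qquad \forall\, t\geq 0;
\]
minimizing over $t\geq 0$ and combining with $\bm{X_S}\succeq \bm{x_S x_S^\top}$ recovers the rank-one inequality $\min\{1,\sum_{i\in\supp(\bm{d})} z_i\}\cdot\langle \bm{dd^\top},\bm{X}\rangle \geq (\bm{d^\top x})_+^2$. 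The analogous derivation applied to the second copositive constraint produces the matching $(\bm{d^\top x})_-^2$ term, and together they are precisely the defining inequality of $\clconv(\mathcal{X}_{\bm{d}})$ from Theorem~\ref{theo:hullGen} (with $\bm{d}\geq \bm{0}$, so $\supp_-(\bm{d})=\emptyset$). Hence membership in $\clconv(\mathcal{X}_{\bm{d}})$ is recovered for every $\bm{d}\in\R_+^n$.

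The only nontrivial technical step is the clean handling of the $\min\{1,\cdot\}$ inside the Schur complement, which is already done in Proposition~\ref{prop:validX}; here it is simply carried out on the principal submatrix indexed by $S$, using that positive semidefiniteness and copositivity of Schur complements are preserved under passing to principal submatrices. No new machinery beyond Theorem~\ref{theo:hullGen} and the tools already deployed in Proposition~\ref{prop:validX} is needed.
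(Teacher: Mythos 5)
Your proposal is correct and follows essentially the same route the paper intends: the corollary is stated without proof precisely because it is the argument of Proposition~\ref{prop:validX} carried out on the principal submatrix indexed by $S$ (using that $\bm{X}\succeq\bm{xx^\top}$ restricts to $\bm{X_S}\succeq\bm{x_Sx_S^\top}$ and that enlarging the index set only weakens the denominators), and your converse step of testing the copositive matrix against $(t,\bm{d_S})$ with $t\geq 0$ and minimizing over $t$ is just the explicit unwinding of the copositive Schur complement already used there.
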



Copositive programs are notoriously hard to solve, calling into question how practical Proposition~\ref{prop:validX} and Corollary~\ref{corr:validXSupport} are. As we show next, it is possible to rewrite the copositive constraints as positive semi-definite constraints in an extended formulation, by exploiting the fact that $\bm{X}\succeq \bm{xx^\top}$ holds.

\begin{proposition}\label{propo:cop-sdp}
The system
\small\begin{align}\label{eq:psdEquiv}
\exists \bm{g},\bm{h}\in \R^n \text{ such that }\bm{h}\leq \bm{x}\leq \bm{g},\;
\begin{pmatrix}
\sum_{i\in [n]} z_i & -\bm{g^\top}\\
-\bm{g}& \bm{X}
\end{pmatrix}\in \mathcal{S}_+^{n+1},\; \text{and }\begin{pmatrix}
\sum_{i\in [n]} (1-z_i) & \bm{h^\top}\\
\bm{h}& \bm{X}
\end{pmatrix}\in \mathcal{S}_+^{n+1}
\end{align}\normalsize

\noindent is valid for $\bar{\mathcal X}$ and is equivalent to imposing constraints \eqref{eq:copConstraints}.
\end{proposition}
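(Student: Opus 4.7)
The plan splits into two claims: validity of \eqref{eq:psdEquiv} for $\bar{\mathcal{X}}$, and equivalence of \eqref{eq:psdEquiv} with the copositive system \eqref{eq:copConstraints} in the space of $(\bm{x},\bm{X},\bm{z})$. Once equivalence is established, validity is an immediate consequence of Proposition~\ref{prop:validX}. Since the two blocks in \eqref{eq:psdEquiv} are structurally identical up to the sign substitution $\bm{x}\mapsto -\bm{x}$, $\bm{g}\mapsto -\bm{h}$ and $\sum_i z_i\mapsto \sum_i(1-z_i)$, I only argue for the first block and set $\alpha=\sum_{i\in[n]}z_i$.

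The easier direction (\eqref{eq:psdEquiv} $\Rightarrow$ \eqref{eq:copConstraints}) is an immediate sandwich. Evaluating the PSD quadratic form at an arbitrary $(\lambda,\bm{d})\in\mathbb{R}_+\times\mathbb{R}_+^n$ and using $\bm{g}\geq\bm{x}$ together with $\lambda\bm{d}\geq\bm{0}$ gives
\begin{equation*}
0\leq \lambda^2\alpha-2\lambda\bm{d}^\top\bm{g}+\bm{d}^\top\bm{X}\bm{d}\leq \lambda^2\alpha-2\lambda\bm{d}^\top\bm{x}+\bm{d}^\top\bm{X}\bm{d},
\end{equation*}
which is exactly the copositivity statement for the first matrix in \eqref{eq:copConstraints}.

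The harder direction (\eqref{eq:copConstraints} $\Rightarrow$ \eqref{eq:psdEquiv}) will be handled by convex duality. First, minimizing the copositive quadratic in $\lambda\geq 0$ at fixed $\bm{d}\geq\bm{0}$ shows that the first copositive constraint is equivalent to the scalar inequality $2\bm{\mu}^\top\bm{x}-\bm{\mu}^\top\bm{X}\bm{\mu}\leq\alpha$ for every $\bm{\mu}\geq\bm{0}$. Now consider the convex quadratic program $\min\{\bm{g}^\top\bm{X}^{-1}\bm{g}:\bm{g}\geq\bm{x}\}$, assuming momentarily $\bm{X}\succ\bm{0}$. Its Lagrangian dual is precisely $\max_{\bm{\mu}\geq\bm{0}}\{2\bm{\mu}^\top\bm{x}-\bm{\mu}^\top\bm{X}\bm{\mu}\}$, and strong duality holds because the constraints are linear with strictly feasible points. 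The copositive hypothesis therefore bounds the primal optimum by $\alpha$, so there exists $\bm{g}^\ast\geq\bm{x}$ with $(\bm{g}^\ast)^\top\bm{X}^{-1}\bm{g}^\ast\leq\alpha$, equivalently $\alpha\bm{X}\succeq\bm{g}^\ast(\bm{g}^\ast)^\top$, which is the first PSD constraint of \eqref{eq:psdEquiv} by the Schur complement. The KKT conditions moreover give the explicit form $\bm{g}^\ast=\bm{X}\bm{\mu}^\ast$ with dual optimum $\bm{\mu}^\ast\geq\bm{0}$.

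The main obstacle will be the case where $\bm{X}$ is singular, which I plan to settle by perturbation. Replacing $\bm{X}$ by $\bm{X}_\varepsilon=\bm{X}+\varepsilon\bm{I}\succ\bm{0}$ only enlarges the left-hand side of every copositive inequality, so the invertible argument above produces $\bm{g}_\varepsilon\geq\bm{x}$ with $\alpha\bm{X}_\varepsilon\succeq\bm{g}_\varepsilon\bm{g}_\varepsilon^\top$. The ellipsoidal bound $\|\bm{g}_\varepsilon\|^2\leq\lambda_{\max}(\bm{X}_\varepsilon)\cdot\bm{g}_\varepsilon^\top\bm{X}_\varepsilon^{-1}\bm{g}_\varepsilon\leq\alpha\,(\lambda_{\max}(\bm{X})+1)$ keeps the family bounded, and closedness of the PSD cone passes the conclusion to any subsequential limit $\bm{g}^\ast\geq\bm{x}$ with $\alpha\bm{X}\succeq\bm{g}^\ast(\bm{g}^\ast)^\top$. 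The degenerate case $\alpha=0$ is handled separately: letting $\lambda\to\infty$ in the copositive inequality $-2\lambda\bm{d}^\top\bm{x}+\bm{d}^\top\bm{X}\bm{d}\geq 0$ forces $\bm{d}^\top\bm{x}\leq 0$ for every $\bm{d}\geq\bm{0}$, i.e.\ $\bm{x}\leq\bm{0}$, so $\bm{g}=\bm{0}$ trivially satisfies $\bm{g}\geq\bm{x}$ and the degenerate PSD condition.
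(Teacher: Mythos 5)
Your proof is correct, and it takes a genuinely different route from the paper's. The paper proves the key equivalence as a statement about cones (Proposition~\ref{prop:sdp-exactness} in Appendix~\ref{sec:proofEquiv}): it sets $\mathcal{C}_L=\mathcal{C}_+^{n+1}\cap\mathcal{R}_L$ and $\mathcal{C}_R=\mathcal{S}_+^{n+1}+\mathcal{R}_R$ and shows $\mathcal{C}_L^*=\mathcal{C}_R^*$, the crux being that any positive semidefinite matrix whose first row and column are nonnegative off the diagonal decomposes as a rank-one completely positive matrix plus a positive semidefinite matrix supported on the trailing block. You instead argue pointwise: you reduce the copositive condition (given $\bm{X}\succeq\bm{0}$) to the scalar family $2\bm{\mu}^\top\bm{x}-\bm{\mu}^\top\bm{X}\bm{\mu}\leq\alpha$ for $\bm{\mu}\geq\bm{0}$, recognize this as the Lagrangian dual of $\min\{\bm{g}^\top\bm{X}^{-1}\bm{g}:\bm{g}\geq\bm{x}\}$, and extract the certificate $\bm{g}^\ast$ as the primal minimizer, handling singular $\bm{X}$ by an $\varepsilon$-perturbation with a uniform norm bound and a limit. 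The trade-offs: the paper's conic-duality argument is shorter and avoids any case split on invertibility of $\bm{X}$, but it invokes the bipolar theorem and thus tacitly relies on closedness of the Minkowski sum $\mathcal{S}_+^{n+1}+\mathcal{R}_R$, which is not verified there; your argument is more elementary and constructive (it even yields $\bm{g}^\ast=\bm{X}\bm{\mu}^\ast$ explicitly, which is useful if one wants to separate or round), at the price of the extra perturbation-and-compactness step and the separate treatment of $\alpha=0$. Your observation that the $\bm{h}$-block follows from the $\bm{g}$-block by the substitution $\bm{x}\mapsto-\bm{x}$, and that validity for $\bar{\mathcal{X}}$ is then immediate from Proposition~\ref{prop:validX} together with $\bm{X}\succeq\bm{xx}^\top\succeq\bm{0}$, matches the logical structure the paper intends.
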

The proof of a (slightly) more general result can be found in Appendix~\ref{sec:proofEquiv}. We point out that validity of the inequalities can be easily verified by cases. For constraint $\begin{pmatrix}
\sum_{i\in [n]} z_i & -\bm{g^\top}\\
-\bm{g}& \bm{X}
\end{pmatrix}\in \mathcal{S}_+^{n+1}$, note that if $\sum_{i\in [n]}z_i\geq 1$, then we may set $\bm{g}=\bm{x}$ to satisfy the constraint, as indeed in that case $$\begin{pmatrix}
\sum_{i\in [n]} z_i & -\bm{x^\top}\\
-\bm{x}& \bm{X}
\end{pmatrix}\succeq \begin{pmatrix}
1 & -\bm{x^\top}\\
-\bm{x}& \bm{X}
\end{pmatrix} \text{ and } \begin{pmatrix}
1 & -\bm{x^\top}\\
-\bm{x}& \bm{X}
\end{pmatrix}\in \mathcal{S}_+^{n+1}\Leftrightarrow \bm{X}\succeq \bm{xx^\top}.$$ Moreover, if $\sum_{i\in [n]}z_i=0$, then $\bm{x}\leq \bm{0}$ for any point in $\bar{\mathcal X}$. Thus we may set $\bm{g}=\bm{0}\geq \bm{x}$, and the constraint $\begin{pmatrix}
0 & \bm{0^\top}\\
\bm{0}& \bm{X}
\end{pmatrix}\succeq \bm{0}\Leftrightarrow \bm{X}\succeq \bm{0}$ is trivially satisfied. The second constraint in \eqref{eq:psdEquiv} can be verified to be valid with identical arguments.

\paragraph{One-sided version}
Consider the one-sided relaxation of $\mathcal{\bar X}$ obtained by dropping the constraints $x_iz_i\geq 0$, that is, \begin{align*}\mathcal{\bar X}^-:=\big\{(\bm{x},\bm{X},\bm{z})\in\mathbb{R}^n\times\R^{n\times n}\times\{0,1\}^n:&\;\bm{X}\succeq \bm{xx^\top},~x_i(1-z_i)\leq 0, \forall i\in [n]\big\}.\end{align*}
From Proposition~\ref{corr:oneSided}, we know that the strengthened  inequalities involve only the positive part $(\bm{d^\top x})_+$. By repeating the arguments used previously in this section, we may derive valid inequalities for $\mathcal{\bar X}^-$, which we condense in the following proposition (stated without proof).

\begin{proposition}\label{prop:oneSidedExtended}
Given any $S\subseteq [n]$, the  inequality $\begin{pmatrix}\sum_{i\in S}z_i&-\bm{x_S^\top}\\-\bm{x_S}&\bm{X_S}\end{pmatrix}\in \mathcal{C}_+^{|S|+1}$ is valid for $\mathcal{\bar X}^-$, and can be equivalently written in an extended formulation as  
\begin{align*}
\exists \bm{g}\in \R^S \text{ such that }\bm{x_S}\leq \bm{g},\;
\begin{pmatrix}
\sum_{i\in S} z_i & -\bm{g^\top}\\
-\bm{g}& \bm{X_S}
\end{pmatrix}\in \mathcal{S}_+^{|S|+1}.
\end{align*}
\end{proposition}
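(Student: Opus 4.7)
The proposition bundles two claims: (i) validity of the copositive inequality over $\bar{\mathcal X}^-$, and (ii) equivalence of that copositive constraint with the $\bm{g}$-extended PSD system. My plan is to parallel Proposition~\ref{prop:validX} and Proposition~\ref{propo:cop-sdp}, invoking the one-sided rank-one hull of Proposition~\ref{corr:oneSided}(a) in place of Theorem~\ref{theo:hullGen}, so that only the ``$(\bm{d}^\top\bm{x})_+^2$'' branch of the two-sided argument survives and a single bordered matrix (rather than two) arises.

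For validity, I would first note that the proof of Proposition~\ref{prop:connection}(a) carries over verbatim to the one-sided setting---replacing $\bar{\mathcal X}$ and $\mathcal X_{\bm{Q}}$ by $\bar{\mathcal X}^-$ and $\mathcal X_{\bm{Q}}^-$ everywhere---yielding the one-sided analog of Corollary~\ref{cor:validGenForm}: for every $(\bm{x},\bm{X},\bm{z})\in\bar{\mathcal X}^-$ and every $\bm{d}\in\R^n$, the projection $(\bm{x},\bm{z},\langle\bm{dd^\top},\bm{X}\rangle)$ lies in $\clconv(\mathcal X_{\bm{d}}^-)$. Combining this with Proposition~\ref{corr:oneSided}(a), and using $\supp(\bm{d})\subseteq S\Rightarrow \sum_{i\in\supp(\bm{d})}z_i\le\sum_{i\in S}z_i$ to enlarge the denominator, I obtain
$$\min\left\{1,\sum_{i\in S}z_i\right\}\cdot(\bm{d}^\top\bm{X_S}\bm{d})\;\geq\;(\bm{d}^\top\bm{x_S})_+^2\qquad\forall\,\bm{d}\in\R_+^S.$$
Since $\bm{X}\succeq\bm{0}$ covers the case $\bm{d}^\top\bm{x_S}\le 0$, this is equivalent to the conditional quadratic inequality $\bm{d}^\top\bigl(\min\{1,\sum_{i\in S}z_i\}\bm{X_S}-\bm{x_S}\bm{x_S^\top}\bigr)\bm{d}\ge 0$ for all $\bm{d}\in\R_+^S$ with $\bm{d}^\top(-\bm{x_S})\le 0$. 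Applying the copositive Schur complement of~\cite{ping1993criteria} converts this into a copositive matrix of order $|S|+1$ with $\min\{1,\sum z_i\}$ in the corner, and splitting that minimum exactly as in the proof of Proposition~\ref{prop:validX} (the ``$1$'' branch being already implied by $\bm{X}\succeq\bm{xx^\top}$) isolates the stated constraint.

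For the equivalence, the easy direction (PSD system $\Rightarrow$ copositive constraint) is immediate from the identity
$$\begin{pmatrix}\sum_{i\in S}z_i & -\bm{x_S^\top}\\ -\bm{x_S} & \bm{X_S}\end{pmatrix}=\begin{pmatrix}\sum_{i\in S}z_i & -\bm{g^\top}\\ -\bm{g} & \bm{X_S}\end{pmatrix}+\begin{pmatrix}0 & (\bm{g}-\bm{x_S})^\top\\ \bm{g}-\bm{x_S} & \bm{0}\end{pmatrix},$$
in which the first summand is PSD (and therefore copositive) by assumption and the second is entrywise nonnegative (and therefore copositive) since $\bm{g}\ge\bm{x_S}$. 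The main obstacle is the converse: given the copositive constraint together with $\bm{X}\succeq\bm{xx^\top}$, exhibit a witness $\bm{g}\ge\bm{x_S}$ for the PSD constraint. The integer endpoints are straightforward---$\bm{g}=\bm{x_S}$ works whenever $\sum_{i\in S}z_i\ge 1$, in which case the Schur complement reduces to the principal submatrix inequality $\bm{X_S}\succeq\bm{x_S}\bm{x_S^\top}$, and $\bm{g}=\bm{0}$ works whenever $\sum_{i\in S}z_i=0$, since the sign-indicator constraints in $\bar{\mathcal X}^-$ force $\bm{x_S}\le\bm{0}$---but the fractional regime requires the more delicate interpolation used in Appendix~\ref{sec:proofEquiv} for Proposition~\ref{propo:cop-sdp}. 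That construction adapts here without change and is strictly simpler, since only the ``positive'' bordered matrix must be handled once the constraint $x_iz_i\ge 0$ has been dropped from $\bar{\mathcal X}^-$.
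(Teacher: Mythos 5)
The paper states this proposition without proof, saying only that it follows ``by repeating the arguments used previously in this section''; your proposal carries out exactly that program, substituting the one-sided hull of Proposition~\ref{corr:oneSided}(a) into the derivation of Proposition~\ref{prop:validX} and invoking the appendix result (Proposition~\ref{prop:sdp-exactness}, applied with $\bm{x}\mapsto-\bm{x_S}$ and $\bm{y}\mapsto-\bm{g}$, so that $\bm{y}\le\bm{x}$ becomes $\bm{g}\ge\bm{x_S}$) for the hard direction of the equivalence. This is correct and is essentially the paper's intended argument, so no further comparison is needed.
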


 \begin{remark}[Comparison with sets with indicator variables]
 Consider the set with indicator variables defined as 
\begin{align*}\mathcal{\bar Y}:=\big\{(\bm{x},\bm{X},\bm{z})\in\mathbb{R}^n\times\R^{n\times n}\times\{0,1\}^n:&\;\bm{X}\succeq \bm{xx^\top},~x_i(1-z_i)= 0,\;i=1,\dots,n\big\}.\end{align*}
\citet{atamturk2019rank} show that valid inequalities for this set, derived from rank-one strengthening, are given by conic constraints of the form $\begin{pmatrix}\sum_{i\in S}z_i&\bm{x_S^\top}\\\bm{x_S}&\bm{X_S}\end{pmatrix}\in \mathcal{S}_+^{|S|+1}\Leftrightarrow\begin{pmatrix}\sum_{i\in S}z_i&-\bm{x_S^\top}\\-\bm{x_S}&\bm{X_S}\end{pmatrix}\in \mathcal{S}_+^{|S|+1}$. Proposition~\ref{prop:oneSidedExtended} shows that to obtain similar inequalities for the relaxation $\bar{\mathcal{X}}^-$, it suffices to relax the inequalities from the positive semidefinite cone to the copositive cone (while being careful with the sign of the continuous variables). 
 \end{remark}

\section{Formulations for SVM problems}\label{sec:SVM}
We now apply the theory in \S\ref{sec:hull} and \S\ref{sec:validExtended} to design relaxations to SVM problems with the 0--1 loss \eqref{eq:BigMPenalty} and \eqref{eq:BigMCardinality}. As we show later in our computational section, the solutions from the relaxations can be used directly as estimators for this class of problems, outperforming standard SVM approaches in settings with noise and outliers and requiring only a small fraction of the time to solve the problems to optimality using MIO solvers.

\subsection{Strong rank-one reformulations for SVM problems}
A direct implementation of the proposed relaxation would simply apply the inequalities in Proposition~\ref{prop:oneSidedExtended} to formulation \eqref{eq:SVMReform0}. However, as discussed in \S\ref{sec:SVMReformExtended}, this approach is computationally too expensive due to the difficulty of handling large-dimensional conic constraints such as $\bm{X}-\bm{xx^\top}\in \mathcal{S}_+^{n+1}$. 
The approach we propose is to implement the strengthening with lower-dimensional cones with dimension depending on the number of features $p$, but not on the number of datapoints $n$. 

Toward this end, recall formulation \eqref{eq:BigMPenalty2}, which we repeat for convenience:
\begin{subequations}\label{eq:SVMRepeated}
	\begin{align}
		\min_{\bm{w},\bm{x},\bm z}\;&\|\bm{w}\|_2^2+\lambda\sum_{i=1}^n z_i\\
		\text{s.t.}\:&x_i=1-y_i(\bm{a_i^\top w})\quad \forall i\in [n]\\
		&y_i\left(\bm{a_i^\top w}\right)\geq 1-Mz_i\quad \forall i\in [n],\\
		&\bm{w}\in \R^p,\;\bm{z}\in \{0,1\}^n
	\end{align}
    \end{subequations}
Consider introducing additional variables $\bm{W}\in \R^{p\times p}$ representing products $W_{ij}=w_iw_j$. Observe that $x_ix_j=\left(1-y_i\cdot \bm{a_i^\top w}\right)\left(1-y_j\cdot \bm{a_j^\top w}\right)=1-y_i\cdot \bm{a_i^\top w}-y_j\cdot \bm{a_j^\top w}+y_iy_j\cdot \bm{a_i^\top ww^\top a_j}$; in matrix form, given any subset $S\subseteq [n]$, we can write the equality
\begin{align*}
\bm{x_Sx_S^\top}=&\bm{1_S1_S^\top}-\bm{A_{S,p} w}\bm{1_S^\top}-\bm{1_S}\bm{w^\top A_{S,p}^\top}+\bm{A_{S,p} w_Sw_S^\top A_{S,p}^\top}\\
=&\bm{1_S}\bm{1_S^\top}-\bm{A_{S,p} w}\bm{1_S^\top}-\bm{1_S}\bm{w^\top A_{S,p}^\top}+\bm{A_{S,p} W A_{S,p}^\top},
\end{align*}
where $\bm{A}$ is the matrix whose $i$-th row is $y_i\cdot\bm{a_i}$ and $\bm{A_{S,p}}\in \R^{|S|\times p}$ is the submatrix of $\bm{A}$ containing rows indexed by $S$. Thus, in the context of SVM problems, we may write the valid inequalities of Proposition~\ref{prop:oneSidedExtended} as
\begin{align*}
\exists \bm{g}\in \R^S \text{ such that }\bm{1_S}-\bm{A_{S,p} w}\leq \bm{g},\;
\begin{pmatrix}
\sum_{i\in S} z_i & -\bm{g^\top}\\
-\bm{g}& \bm{1_S}\bm{1_S^\top}-\bm{A_{S,p} w}\bm{1_S^\top}-\bm{1_S}\bm{w^\top A_{S,p}^\top}+\bm{A_{S,p} W A_{S,p}^\top}
\end{pmatrix}\in \mathcal{S}_+^{|S|+1}.
\end{align*}
The validity of this class of inequalities can be checked directly using identical arguments to Proposition~\ref{propo:cop-sdp}. Indeed, we can always set $\bm{W}=\bm{ww^\top}$ and: $\bullet$ if no points in $S$ are misclassified, then $\bm{1_S}-\bm{A_{S,p}w}\leq 0$, and we may set $\bm{z_S}=\bm{g_S}=\bm{0_S}$; $\bullet$ otherwise $\sum_{i\in S}z_i\geq 1$ and we can set $\bm{g}=\bm{1_S}-\bm{A_{S,p} w}$, since 
$$\begin{pmatrix}
	1 & \left(\bm{A_{S,p} w}-\bm{1_S}\right)^\top\\
\bm{A_{S,p} w}-\bm{1_S}& \bm{1_S}\bm{1_S^\top}-\bm{A_{S,p} w}\bm{1_S^\top}-\bm{1_S}\bm{w^\top A_{S,p}^\top}+\bm{A_{S,p} ww^\top A_{S,p}^\top}
\end{pmatrix}\in \mathcal{S}_+^{|S|+1}$$
is always satisfied.

We formalize the formulation proposed for SVM problems in the following proposition.
\begin{proposition}\label{prop:formulation}
Given any collection $\mathcal L\subseteq 2^{[n]}$ of subsets of $[n]$, the formulation
\small\begin{subequations}\label{eq:SVMReform1}
	\begin{align}
		\min_{\substack{\bm{w},\bm{W},\bm{ z}\\\{\bm{g^L}\}_{L\in \mathcal{L}}}}\;&\langle \bm{I}_{p\times p}, \bm{W}\rangle+\lambda\sum_{i=1}^n z_i\label{eq:SVMReform1_obj}\\
		\text{s.t.}\:&\bm{1_L}-\bm{A_{L,p} w}\leq \bm{g^L},\;
\begin{pmatrix}
\sum_{i\in L} z_i & -(\bm{g^L})^\top\\
-\bm{g^L}& \bm{1_L}\bm{1_L^\top}-\bm{A_{L,p} w}\bm{1_L^\top}-\bm{1_L}\bm{w^\top A_{L,p}^\top}+\bm{A_{L,p} W A_{L,p}^\top}
\end{pmatrix}\in \mathcal{S}_+^{|L|+1},\; \forall L\in \mathcal{L}\label{eq:SVMReform1_CP}\\
        &\bm{W}-\bm{ww^\top}\in \mathcal{S}_+^p \label{eq:SVMReform1_W}\\
		&\bm{w}\in \R^p,\;\bm{W}\in \R^{p\times p},\;\bm{z}\in [0,1]^n,\; \{\bm{g^L}\in \R^L\}_{L\in \mathcal{L}}\label{eq:SVMReform1_bounds}
	\end{align}
\end{subequations}\normalsize
is a convex relaxation of \eqref{eq:BigMPenalty}. If constraint $\sum_{i=1}^n z_i\leq k$ is added and term $\lambda\sum_{i=1}^n z_i$ is removed from the objective, the ensuing formulation is a convex relaxation of \eqref{eq:BigMCardinality}.
\end{proposition}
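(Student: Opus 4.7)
The plan is to establish two things: that formulation \eqref{eq:SVMReform1} defines a convex feasible region with a convex objective, and that it is a valid relaxation of \eqref{eq:BigMPenalty} in the sense that every feasible solution of the latter lifts to a feasible solution of the former with the same objective value. Convexity is immediate: the objective \eqref{eq:SVMReform1_obj} is linear in $(\bm{W},\bm{z})$; the linear inequalities $\bm{1_L}-\bm{A_{L,p}w}\leq \bm{g^L}$ are clearly convex; the Schur-complement constraint \eqref{eq:SVMReform1_W} defines a convex set; and each matrix appearing in \eqref{eq:SVMReform1_CP} depends affinely on the decision variables $(\bm{w},\bm{W},\bm{z},\bm{g^L})$, so its membership in $\mathcal{S}_+^{|L|+1}$ is a convex constraint.

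For the relaxation property, let $(\bm{w},\bm{z})$ be feasible for \eqref{eq:BigMPenalty}, and lift it by choosing $\bm{W}=\bm{ww^\top}$. This choice satisfies \eqref{eq:SVMReform1_W} with equality and guarantees that the objective $\langle\bm{I}_{p\times p},\bm{W}\rangle+\lambda\sum_{i=1}^n z_i$ coincides with $\|\bm{w}\|_2^2+\lambda\sum_{i=1}^n z_i$. It remains to specify each $\bm{g^L}$ so that the constraints in \eqref{eq:SVMReform1_CP} hold. I split into cases based on whether $\sum_{i\in L}z_i\geq 1$ or $\sum_{i\in L}z_i=0$; integrality of $\bm{z}$ in \eqref{eq:BigMPenalty} means no other cases arise.

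In the case $\sum_{i\in L}z_i\geq 1$, set $\bm{g^L}=\bm{1_L}-\bm{A_{L,p}w}$, which trivially satisfies the linear bound. Substituting $\bm{W}=\bm{ww^\top}$ collapses the lower-right block in \eqref{eq:SVMReform1_CP} to the rank-one outer product $\bm{g^L}(\bm{g^L})^\top$, and the resulting matrix
\[
\begin{pmatrix}\sum_{i\in L}z_i & -(\bm{g^L})^\top\\ -\bm{g^L} & \bm{g^L}(\bm{g^L})^\top\end{pmatrix}
\]
is PSD by the Schur complement, since $\bm{g^L}(\bm{g^L})^\top - \bm{g^L}(\bm{g^L})^\top/\sum_{i\in L}z_i\succeq \bm{0}$ whenever $\sum_{i\in L}z_i\geq 1$. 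In the complementary case $\sum_{i\in L}z_i=0$, integrality forces $z_i=0$ for every $i\in L$, so constraint \eqref{eq:BigMPenalty_classification} gives $\bm{1_L}-\bm{A_{L,p}w}\leq \bm{0}$. Setting $\bm{g^L}=\bm{0}$ then satisfies the linear bound, and the matrix in \eqref{eq:SVMReform1_CP} reduces to the block-diagonal PSD matrix whose lower-right block is $(\bm{1_L}-\bm{A_{L,p}w})(\bm{1_L}-\bm{A_{L,p}w})^\top$.

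The cardinality-constrained claim follows by the identical argument, since dropping the penalty term $\lambda\sum_i z_i$ from the objective and appending the linear constraint $\sum_i z_i\leq k$ is a change that is preserved verbatim by the lifting $(\bm{w},\bm{z})\mapsto(\bm{w},\bm{ww^\top},\bm{z},\{\bm{g^L}\}_{L\in\mathcal{L}})$ constructed above. No step is genuinely hard; the only piece of bookkeeping worth stressing is that the rank-one substitution $\bm{W}=\bm{ww^\top}$ turns the lower-right block in \eqref{eq:SVMReform1_CP} into a perfect square $(\bm{1_L}-\bm{A_{L,p}w})(\bm{1_L}-\bm{A_{L,p}w})^\top$ that matches the choice of $\bm{g^L}$, which is precisely what the derivation preceding the proposition was engineered to accomplish.
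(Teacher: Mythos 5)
Your proof is correct and follows essentially the same route as the paper: the paper also verifies validity by lifting a feasible $(\bm{w},\bm{z})$ via $\bm{W}=\bm{ww^\top}$ and splitting into the cases $\sum_{i\in L}z_i\geq 1$ (set $\bm{g^L}=\bm{1_L}-\bm{A_{L,p}w}$, so the block matrix is a rank-one outer product plus a nonnegative increase of the corner entry) and $\sum_{i\in L}z_i=0$ (set $\bm{g^L}=\bm{0}$ using $\bm{1_L}-\bm{A_{L,p}w}\leq\bm{0}$). The Schur-complement bookkeeping you carry out matches the paper's argument preceding the proposition.
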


\subsection{Interpretation as decomposition}

We now provide some high-level intuition and interpretation of the formulation proposed in Proposition~\ref{prop:formulation}. Consider an alternative application of the formulations proposed, consisting of using the valid inequalities in Proposition~\ref{corr:oneSided}, without introducing matrix variables $\bm{X}$ or $\bm{W}$.  Then, given any matrix $\bm{F}\in \R_+^{q\times n}$ (for some $q\in \mathbb{Z}_+$), we can rewrite \eqref{eq:SVMRepeated} as
\begin{subequations}\label{eq:SVMReform2}
	\begin{align}
		\min_{\bm{w},\bm{x},\bm{X},\bm{ z}}\;&\|\bm{w}\|_2^2-\left\|\bm{F}\left(\bm{1}-\bm{A w}\right)\right\|_2^2+\|\bm{Fx}\|_2^2+\lambda\sum_{i=1}^n z_i\label{eq:SVMReform2_obj}\\
		\text{s.t.}\:&\bm{x}=\bm{1}-\bm{A w}\label{eq:SVMReform2_Eq}\\
        &x_i(1-z_i)\leq 0\quad \forall i\in [n]\\
		&\bm{w}\in \R^p,\;\bm{x}\in \R^n,\;\bm{z}\in \{0,1\}^n.\label{eq:SVMReform2_bounds}
	\end{align}
\end{subequations}
Then, letting $\bm{f_j}\in \R^n$ denote the $j$-th row of $\bm{F}$, we find from Proposition~\ref{corr:oneSided} that
\begin{align}\label{eq:substitution}
t\geq\|\bm{Fx}\|_2^2&\Leftrightarrow t\geq \sum_{j=1}^q\left(\bm{f_j^\top x}\right)^2\implies t\geq \sum_{j=1}^q\left(\frac{(\bm{f_j^\top x})_+^2}{\min\{1,\sum_{i\in \supp(\bm{f_j})}z_i\}}+(\bm{f_j^\top x})_-^2\right).
\end{align}
Thus, if term $\|\bm{w}\|_2^2-\left\|\bm{F}\left(\bm{1}-\bm{A w}\right)\right\|_2^2$ is convex (that is, $\bm{I}_{p\times p}\succeq \bm{A^\top F^\top FA}$) and term $\|\bm{Fx}\|_2^2$ in the objective \eqref{eq:SVMReform2_obj} is replaced using \eqref{eq:substitution}, we obtain a stronger reformulation of \eqref{eq:SVMRepeated}.
 However, the quality of relaxation depends on $\bm{F}$, which must be chosen in an informed way. As we now show, the formulation proposed in Proposition~\ref{prop:formulation} can be shown to be at least as strong as formulations obtained using representation \eqref{eq:substitution}.

  \begin{proposition}\label{prop:decomposition}
If $\supp(\bm{f_j})\in  \mathcal{L}$ for all $j\in [q]$, then \eqref{eq:SVMReform1} is at least as strong as the formulation obtained from using strengthening \eqref{eq:substitution}.
 \end{proposition}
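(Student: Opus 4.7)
The plan is to take any feasible solution $(\bm{w}, \bm{W}, \bm{z}, \{\bm{g}^L\}_{L\in\mathcal{L}})$ of \eqref{eq:SVMReform1} and exhibit a feasible solution of the alternative formulation (i.e., \eqref{eq:SVMReform2} with $\|\bm{F}\bm{x}\|_2^2$ replaced by a variable $t$ constrained by \eqref{eq:substitution}) having the same objective value. I will keep $\bm{w}$ and $\bm{z}$ unchanged, set $\bm{x} := \bm{1} - \bm{A}\bm{w}$, define $\bm{\Delta} := \bm{W} - \bm{ww}^\top \succeq \bm{0}$, and choose $t := \|\bm{F}\bm{x}\|_2^2 + \langle \bm{I}, \bm{\Delta}\rangle$. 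A short computation shows that both objectives equal $\langle\bm{I}, \bm{W}\rangle + \lambda\sum_i z_i$, so the only task is to verify that this $t$ satisfies the strengthening
\[ t \geq \sum_{j=1}^q\left[\frac{(\bm{f}_j^\top\bm{x})_+^2}{\min\{1,\sum_{i\in L_j} z_i\}} + (\bm{f}_j^\top\bm{x})_-^2\right], \qquad L_j := \supp(\bm{f}_j) \in \mathcal{L}. \]

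The central step is to extract the required scalar bounds from the matrix-valued PSD constraints \eqref{eq:SVMReform1_CP}. First, the lower-right block for $L = L_j$ can be rewritten as $\bm{M}_{L_j} = \bm{x}_{L_j}\bm{x}_{L_j}^\top + \bm{A}_{L_j,p}\bm{\Delta}\bm{A}_{L_j,p}^\top$ by direct expansion using $\bm{x}_{L_j} = \bm{1}_{L_j} - \bm{A}_{L_j,p}\bm{w}$. Schur complement then yields $\bm{M}_{L_j} \succeq \bm{g}^{L_j}(\bm{g}^{L_j})^\top / \sum_{i\in L_j} z_i$ when $\sum_{i\in L_j} z_i > 0$. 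Evaluating against the nonnegative vector $\bm{f}_{j,L_j}$---and using $\bm{g}^{L_j} \geq \bm{x}_{L_j}$ with $\bm{f}_{j,L_j} \geq \bm{0}$ to deduce $(\bm{f}_{j,L_j}^\top\bm{g}^{L_j})^2 \geq (\bm{f}_j^\top\bm{x})_+^2$---gives
\[ (\bm{f}_j^\top\bm{x})^2 + \bm{r}_j^\top\bm{\Delta}\bm{r}_j \geq \frac{(\bm{f}_j^\top\bm{x})_+^2}{\sum_{i\in L_j} z_i}, \quad \bm{r}_j := \bm{A}^\top\bm{f}_j. \]
The degenerate case $\sum_{i\in L_j} z_i = 0$ is handled separately: the PSD condition forces $\bm{g}^{L_j} = \bm{0}$, hence $\bm{x}_{L_j} \leq \bm{0}$ and $(\bm{f}_j^\top\bm{x})_+ = 0$, making the targeted bound trivial.

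To conclude, I will exploit the mutual exclusivity of $(\bm{f}_j^\top\bm{x})_+$ and $(\bm{f}_j^\top\bm{x})_-$: if $\bm{f}_j^\top\bm{x} \geq 0$ then $(\bm{f}_j^\top\bm{x})^2 = (\bm{f}_j^\top\bm{x})_+^2$ and the display above rearranges to $\bm{r}_j^\top\bm{\Delta}\bm{r}_j \geq (\bm{f}_j^\top\bm{x})_+^2\bigl[1/\min\{1,\sum_{i\in L_j} z_i\} - 1\bigr]$, while if $\bm{f}_j^\top\bm{x} < 0$ then $(\bm{f}_j^\top\bm{x})_+ = 0$ makes the same bound vacuous. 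Summing over $j$ and invoking the convexity requirement $\bm{I} \succeq \bm{A}^\top\bm{F}^\top\bm{F}\bm{A} = \sum_j\bm{r}_j\bm{r}_j^\top$ (needed for the alternative objective to be convex in $\bm{w}$) gives $\langle \bm{I}, \bm{\Delta}\rangle \geq \sum_j\bm{r}_j^\top\bm{\Delta}\bm{r}_j \geq \sum_j (\bm{f}_j^\top\bm{x})_+^2\bigl[1/\min\{1,\sum_{i\in L_j} z_i\} - 1\bigr]$; adding $\|\bm{F}\bm{x}\|_2^2 = \sum_j[(\bm{f}_j^\top\bm{x})_+^2 + (\bm{f}_j^\top\bm{x})_-^2]$ to both sides yields the strengthening. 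I expect the main obstacle to be precisely this case split---a generic bound via $(\bm{f}_j^\top\bm{x})^2 = (\bm{f}_j^\top\bm{x})_+^2 + (\bm{f}_j^\top\bm{x})_-^2$ would leave a residual $(\bm{f}_j^\top\bm{x})_-^2$ term that $\langle\bm{I}, \bm{\Delta}\rangle$ cannot absorb.
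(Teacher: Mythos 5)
Your proof is correct, and it reaches the conclusion by a genuinely different route than the paper. The paper first rewrites constraints \eqref{eq:SVMReform1_CP} via the copositive Schur complement as the infinite family of scalar inequalities $\langle\bm{f}\bm{f}^\top,\bm{X}\rangle \geq (\bm{f}^\top\bm{x})_+^2/\min\{1,\sum_{i\in L}z_i\}+(\bm{f}^\top\bm{x})_-^2$ for all $\bm{f}\in\R_+^n$ with $\supp(\bm{f})\subseteq L$, keeps only the constraints indexed by the $\bm{f_j}$, dualizes them with multipliers $d_j$, sets $\bm{d}=\bm{1}$, and invokes weak duality together with $\bm{W}\succeq\bm{ww}^\top$ and $\bm{I}\succeq\bm{A}^\top\bm{F}^\top\bm{F}\bm{A}$. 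You instead build a direct feasibility map: starting from a feasible point of \eqref{eq:SVMReform1}, you set $t=\|\bm{Fx}\|_2^2+\langle\bm{I},\bm{\Delta}\rangle$, match objectives exactly, and extract the needed scalar bounds by an ordinary (PSD) Schur complement applied to the extended formulation with the $\bm{g}^L$ variables, rather than going through the copositive cone. Both arguments hinge on the same two facts --- the rank-one scalar inequality for each $\bm{f_j}$ and the absorption of the residual via $\bm{\Delta}\succeq\bm{0}$ and $\bm{I}\succeq\sum_j\bm{r_j}\bm{r_j}^\top$ --- but your version is more self-contained (no appeal to the copositive Schur complement or to Lagrangian weak duality) and produces an explicit witness rather than only a bound comparison. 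One small patch: your Schur step yields $\bm{r_j}^\top\bm{\Delta}\bm{r_j}\geq(\bm{f_j}^\top\bm{x})_+^2\bigl[1/\sum_{i\in L_j}z_i-1\bigr]$, with the plain sum in the denominator; this only coincides with the stated bound involving $\min\{1,\sum_{i\in L_j}z_i\}$ when $\sum_{i\in L_j}z_i\leq 1$. For $\sum_{i\in L_j}z_i\geq 1$ the target right-hand side is zero and the bound follows trivially from $\bm{\Delta}\succeq\bm{0}$, so the argument survives, but you should state this extra case explicitly alongside the degenerate case $\sum_{i\in L_j}z_i=0$ that you already handle.
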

\begin{proof}
    Our proof technique is based on constructing successive relaxations of \eqref{eq:SVMReform1} and showing that the last relaxation in the sequence is still stronger than using \eqref{eq:substitution}. We begin by reformulating the constraints \eqref{eq:SVMReform1_CP} using the Schur complement for copositive constraints (see
Corollary~\ref{corr:validXSupport}), as the (infinite) set of constraints
    \begin{align}\label{eq:SVMCPReformulation1}\langle\bm{f f^\top},\bm{X} \rangle \geq \frac{(\bm{f^\top x})^2_+}{\min\{1,\sum_{i\in  L} z_i\}} + (\bm{f^\top x})^2_-\quad \forall  L\in \mathcal{L},\; \forall \bm{f}\in \R_+^n:\supp(\bm{f})\subseteq L,
    \end{align}
where $\bm{X} = \bm{1}\bm{1^\top}-\bm{A w}\bm{1^\top}-\bm{1}\bm{w^\top A^\top}+\bm{A W A^\top}$ and $\bm{x} = \bm{1}-\bm{A w}$. Consider a relaxation of the set of constraints \eqref{eq:SVMCPReformulation1} where: \textit{(i)} constraints associated with sets $L\neq \supp(\bm{f_j})$ for some $j\in [q]$ are removed, and \textit{(ii)} if $L=\supp(\bm{f_j})$ for some $j\in [q]$, then constraints associated with any vector $\bm{f}\neq \bm{f_j}$ are also removed. In other words, we obtain the relaxed system
    \begin{align} 0\geq&-\langle\bm{f_j f_j^\top}, \bm{1}\bm{1^\top}-\bm{A w}\bm{1^\top}-\bm{1}\bm{w^\top A^\top}+\bm{A W A^\top} \rangle \notag\\
    &+ \frac{(\bm{f_j^\top (\bm{1}-\bm{A w}))})^2_+}{\min\{1,\sum_{i\in  \supp(\bm{f_j})} z_i\}} + (\bm{f_j^\top (\bm{1}-\bm{A w}))})^2_-\quad \forall j\in [q].\label{eq:SVMCPReformulation2}
    \end{align}
Next, we dualize constraints \eqref{eq:SVMCPReformulation2} by introducting the dual variables $d_j$ for each $j\in [q]$. We obtain the Lagrangian dual
    \begin{align}
        \max_{\bm{d}\in \R_+^n}\min_{\bm{w},\bm{W},\bm{ z}}\;&\left\langle \bm{I}_{p\times p}-\sum_{j=1}^q d_j \bm{A^\top f_j f_j^\top A}, \bm{W}\right\rangle +\left(2d_j \sum_{j=1}^q\bm{A^\top f_j f_j^\top 1}\right)^\top\bm{w} - \sum_{j=1}^qd_j \bm{ 1^\top f_j f_j^\top 1} \notag\\ 
        &+\sum_{j=1}^q d_j\frac{(\bm{f_j^\top}(\bm{1}-\bm{A w}))^2_+}{\min\{1,\sum_{i\in  \supp(\bm{f_j})} z_i\}} + \sum_{j=1}^qd_j (\bm{f_j^\top} (\bm{1}-\bm{A w}))^2_- +\lambda\sum_{i=1}^n z_i,\label{eq:SVMCPReformulation3}
    \end{align}
    which, by weak duality, is a relaxation of \eqref{eq:SVMReform1}.
    A further relaxation of \eqref{eq:SVMCPReformulation3} is obtained by simply setting $\bm{d}=\bm{1}$.
This final relaxation is at least as strong as the formulation obtained from  \eqref{eq:substitution} if
    \begin{align*}
        \left\langle \bm{I}_{p\times p}-\sum_{j=1}^q  \bm{A^\top f_j f_j^\top A}, \bm{W}\right\rangle \geq \bm{w^\top}\left(\bm{I}_{p\times p}-\sum_{j=1}^q  \bm{A^\top f_j f_j^\top A}\right)\bm{w},
    \end{align*}
which holds since $\bm{W}\succeq \bm{ww^\top}$ and $\bm{I}_{p\times p}\succeq \bm{A^\top F^\top FA}$, concluding the proof.
\end{proof}

Proposition~\ref{prop:decomposition} ascertains that relaxation \eqref{eq:SVMReform1} is indeed stronger than any direct application of the rank-one convexifications discussed in \S\ref{sec:hull}, \emph{provided that the collection $\mathcal{L}$ is rich enough}. However, this strength may come at a prohibitive cost, as the extreme case where $\mathcal{L}=2^{[n]}$ requires exponentially many constraints, involving cones of dimension up to $p+1$. In practice, we suggest using $\mathcal{L}=\left\{L\subseteq [n]: |L|\leq \kappa\right\}$ for some small enough $\kappa\in \mathbb{Z}_+$: the number $\mathcal{O}(n^\kappa)$ of constraints added is polynomial in the number of datapoints $n$, and the dimension of the cones used is bounded by $\kappa+1$ -- except for one $(p+1)$-dimensional cone used to express \eqref{eq:SVMReform1_W}. While this compromise comes at the expense of relaxation quality, as we show in the next section even setting $\kappa=1$ may result in substantial gains compared with alternatives proposed in the statistics and machine learning literature.

\subsection{Interpretation as regularization}

Consider the case where set $\mathcal{L}$ is taken to be the collection of all singletons. In this case, from Proposition~\ref{prop:decomposition}, we find that relaxation \eqref{eq:SVMReform1} can be interpreted as the relaxation where the objective is ``optimally" expressed as 
\begin{align}\label{eq:1dDecomp}
\|\bm{w}\|_2^2-\sum_{i=1}^n d_i(1-y_i\bm{a_i^\top w})^2+\sum_{i=1}^nd_i\left( \frac{(1-y_i\bm{a_i^\top w})_+^2}{z_i}+(1-y_i\bm{a_i^\top w})_-^2\right)+\lambda\sum_{i=1}^n z_i,
\end{align}
where ``optimally" indicates that vector $\bm{d}$ is chosen to ensure the strongest relaxation while guaranteeing that $\bm{I}-\bm{A^\top\text{Diag}(\bm{d})A}\in \mathcal{S}_+^p$. We now retrieve a representation in the original space of variables of \eqref{eq:SVM01}, i.e., without involving indicator variables $\bm{z}$. To do so, we define a class of one-dimensional functions $\phi$ parametrized by two parameters $d,\lambda\in \R_+$ given by 
\begin{align} \phi(x;d,\lambda):=\min_{0\leq z\leq 1}\,\lambda z-dx^2
	+d\frac{(x)_+^2}{z_i}+d(x)_-^2.\label{eq:maxminPenaltySep}\end{align}
Observe then that the proposed relaxation of SVM problems \eqref{eq:SVM01}, with objective value given by \eqref{eq:1dDecomp}, can be written as 
\begin{align}
    \min_{\bm{w}\in \R^p}\underbrace{\|\bm{w}\|_2^2}_{\text{margin}}+\sum_{i=1}^n \underbrace{\phi\left(1-y_i\bm{a_i^\top w};d_i,\lambda\right)}_{\text{misclassification loss for $i$}},
\end{align}
that is, function $\phi$ controls the penalty for misclassifying datapoint $i$ depending on the violation $1-y_i\bm{a_i^\top w}$. We now provide an explicit description of function $\phi$.
\begin{proposition}\label{prop:loss}
	Assuming $d,\lambda>0$, the optimal value of problem \eqref{eq:maxminPenaltySep} is the non-convex loss function 
	\begin{align}
		\phi(x;d,\lambda) = \begin{cases} 0 &\text{if}\ x\leq 0 \\ 2(\sqrt{\lambda d})x -dx^2 &\text{if}\ \ 0<x\leq \sqrt{\lambda/d}\\ \lambda &\text{if}\  x>\sqrt{\lambda/d}.  \end{cases}
	\end{align}
\end{proposition}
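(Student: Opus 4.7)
The plan is to treat \eqref{eq:maxminPenaltySep} as a one-dimensional convex optimization over $z$ with a parameter $x$, and split into cases according to the sign of $x$. Since the inner objective decouples nicely through $(x)_+$ and $(x)_-$, each case collapses to a scalar minimization that can be solved by differentiation and then clipping to the feasible interval $[0,1]$.

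First I would handle the case $x\le 0$. Here $(x)_+=0$ so the term $d(x)_+^2/z$ vanishes (using the convention $0/0=0$), and $(x)_-^2=x^2$, so the two quadratic terms in $x$ cancel and the objective reduces to $\lambda z$. Since $\lambda>0$, the optimum on $[0,1]$ is $z^\ast=0$, yielding $\phi(x;d,\lambda)=0$. Next, for $x>0$, we have $(x)_-=0$ and $(x)_+=x$, so \eqref{eq:maxminPenaltySep} becomes
\begin{align*}
\phi(x;d,\lambda)=-dx^2+\min_{0<z\le 1}\Bigl(\lambda z+\tfrac{dx^2}{z}\Bigr),
\end{align*}
where the restriction $z>0$ is without loss of generality because the objective blows up as $z\downarrow 0$.

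The inner function $g(z):=\lambda z+dx^2/z$ is strictly convex on $(0,\infty)$, and $g'(z)=\lambda-dx^2/z^2=0$ yields the unconstrained minimizer $z^\ast=x\sqrt{d/\lambda}$ with $g(z^\ast)=2x\sqrt{\lambda d}$. The critical threshold is therefore $z^\ast\le 1\iff x\le\sqrt{\lambda/d}$. So when $0<x\le\sqrt{\lambda/d}$, the unconstrained minimizer is feasible and
\begin{align*}
\phi(x;d,\lambda)=2x\sqrt{\lambda d}-dx^2.
\end{align*}
When $x>\sqrt{\lambda/d}$, by strict convexity of $g$ and the fact that the unconstrained minimum lies to the right of $1$, $g$ is decreasing on $(0,1]$, so the optimum is at $z^\ast=1$, giving $g(1)=\lambda+dx^2$ and therefore $\phi(x;d,\lambda)=\lambda$. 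Combining the three cases yields the piecewise formula in the statement.

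The argument is entirely elementary; there is no real obstacle. The only detail that requires care is the boundary behavior at $x=0$ and $z=0$, where the conventions $0/0=0$ and $0\cdot\infty=0$ introduced in the notation section ensure that the two expressions match (both case 1 and case 2 give $\phi(0;d,\lambda)=0$). I would also briefly note continuity at the breakpoint $x=\sqrt{\lambda/d}$ as a sanity check, since $2x\sqrt{\lambda d}-dx^2\big|_{x=\sqrt{\lambda/d}}=2\lambda-\lambda=\lambda$, confirming that the piecewise function is continuous.
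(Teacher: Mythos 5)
Your proof is correct and follows essentially the same route as the paper: split on the sign of $x$, and for $x>0$ minimize the strictly convex function $\lambda z + dx^2/z$ by setting its derivative to zero and clipping the minimizer to $[0,1]$, which yields exactly the threshold $x=\sqrt{\lambda/d}$. Your treatment is slightly more explicit about the boundary behavior at $z=0$ and the continuity at the breakpoint, but the argument is the same.
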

\begin{proof}
	To determine the optimal value of $z$ in \eqref{eq:maxminPenaltySep}, we consider two cases depending on the value of $x$.
	
	\noindent $\bullet$ \emph{Case 1: $x\leq 0$.} In this case, the point is classified on the correct side of the margin. We find that the optimal value is $z^*=0$ as the objective is minimized to zero.  
	
	\noindent$\bullet$ \emph{Case 2: $x\geq 0$.} In this case,setting the derivative of \eqref{eq:maxminPenaltySep} with respect to $z$ to zero, we find that the optimal solution is 
	$$z^*=\min\left\{\sqrt{\frac{d}{\lambda}}x,1\right\},$$
	with objective value $2(\sqrt{\lambda d})x-dx^2$ if $0<x\leq \sqrt{\lambda/d}$, and $\lambda$ if $x>\sqrt{\lambda/d}$.
\end{proof}

Figure~\ref{fig:lossFunctions} depicts the loss function $\phi$ along with alternatives from the literature. The parameter $d$ controls the tradeoff between non-convexity and approximation to the 0-1 loss. In problems with uncertainty and outliers, the concave downward shape of the loss function redistributes the influence from outliers to points near the boundary. Note that for $d=1$, the derived loss is a better approximation of the misclassification loss than the normalized sigmoid, $\psi$-learning, and ramp losses.  This reduces the sensitivity to noise when compared to hinge and other non-convex losses. More importantly, while alternatives \cite{mason1999boosting,shen2003,wu2007robust} require tackling non-convex problems (often settling for local minimizer instead of global), the proposed formulation in Proposition~\ref{prop:formulation} retains convexity and can be solved to global optimality. 

\begin{figure*}[!h]
	\centering
\subfloat[ Normalized sigmoid ]{\includegraphics[width=0.33\textwidth,trim={10cm 5cm 11cm 5cm},clip]{./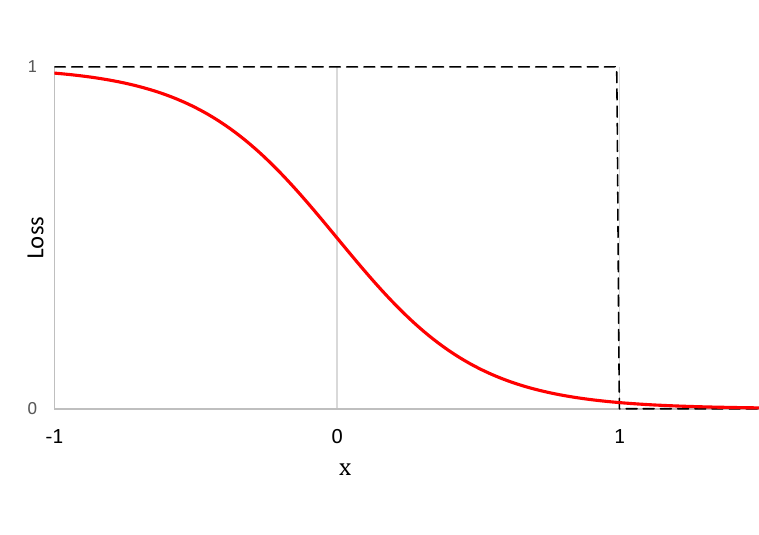}}\hfill
	\subfloat[$\psi$-learning]{\includegraphics[width=0.33\textwidth,trim={10cm 5cm 11cm 5cm},clip]{./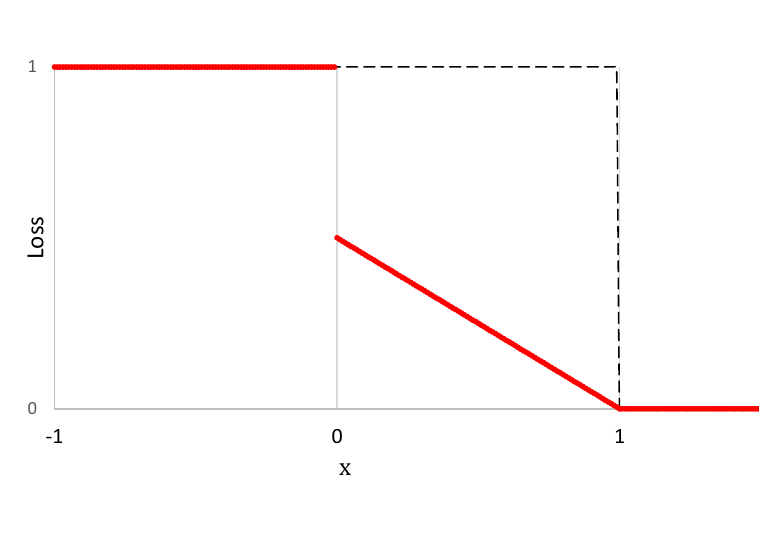}}\hfill
	\subfloat[ Ramp ]{\includegraphics[width=0.33\textwidth,trim={10cm 5cm 11cm 5cm},clip]{./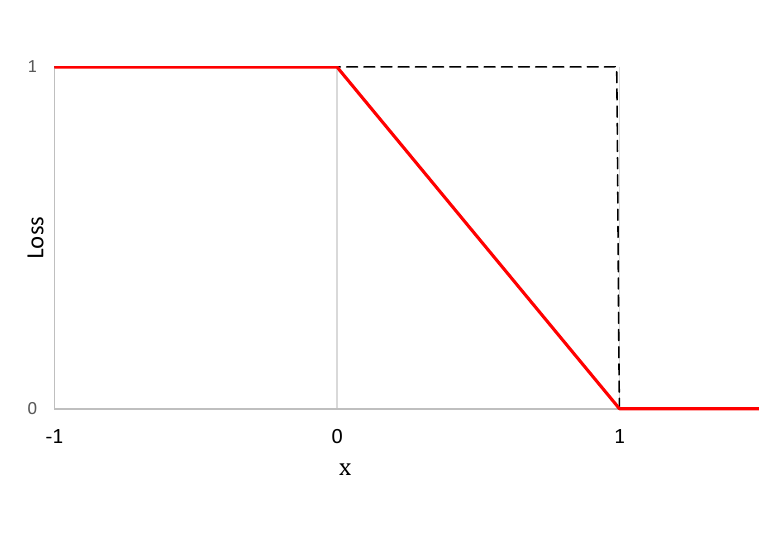}}
	\newline
	\subfloat[ $\phi(\cdot;0.2,1)$ ]{\includegraphics[width=0.33\textwidth,trim={10cm 5cm 11cm 5cm},clip]{./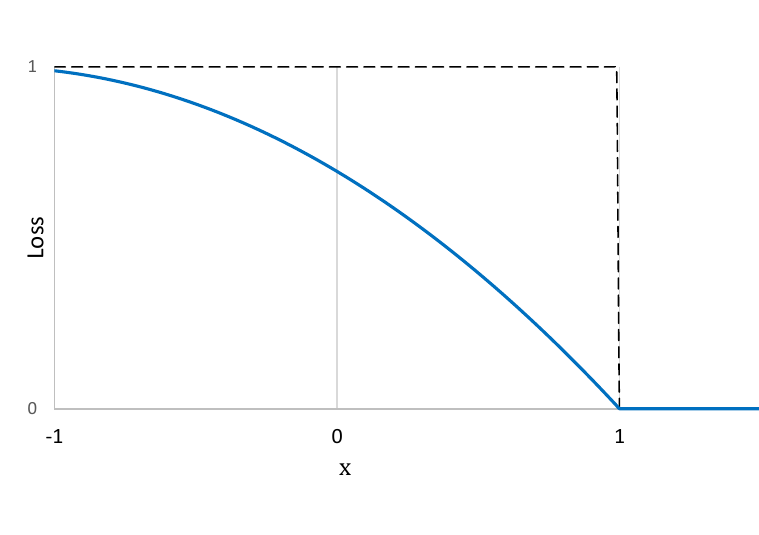}}\hfill
	\subfloat[  $\phi(\cdot;0.5,1)$ ]{\includegraphics[width=0.33\textwidth,trim={10cm 5cm 11cm 5cm},clip]{./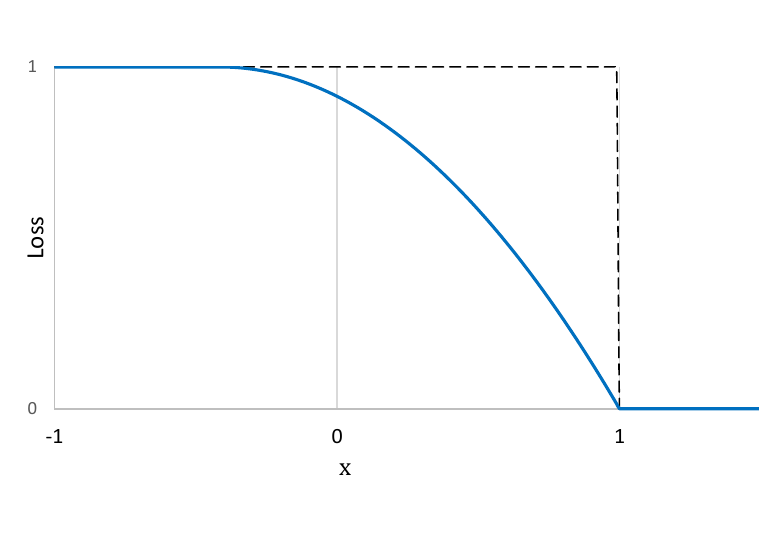}}\hfill
	\subfloat[  $\phi(\cdot;1.0,1)$ ]{\includegraphics[width=0.33\textwidth,trim={10cm 5cm 11cm 5cm},clip]{./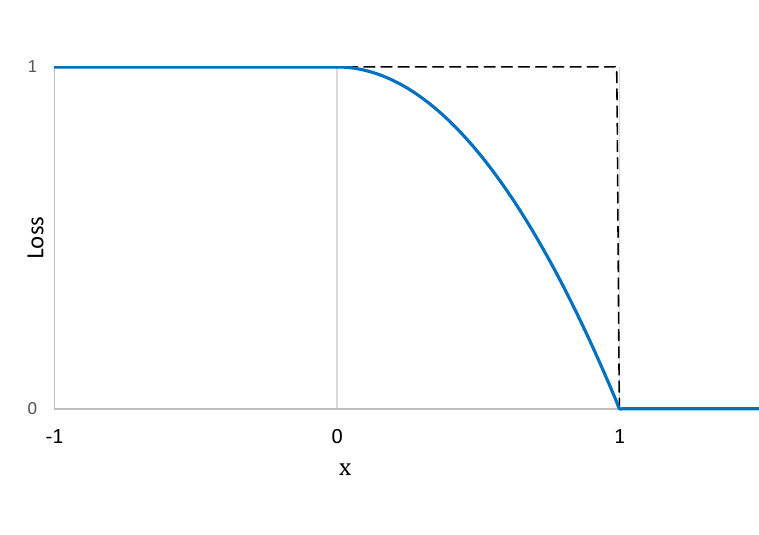}}\hfill
	\caption{\small Non-convex robust losses approximating the 0-1 loss, as a function of $x=y_i\bm{a_i^\top w}$. Top row: loss functions from the literature: the normalized sigmoid loss \citep{mason1999boosting}, the $\psi$-learning loss \citep{shen2003psi} and the ramp loss \citep{wu2007robust}. Bottom row: the derived $\phi$ loss in Proposition~\ref{prop:loss} for different values of hyperparameters $d$ (with $\lambda=1$) . By solving the conic optimization problem \eqref{eq:SVMReform1}, $\bm{d}$ is chosen automatically to ensure the convexity of the ensuing learning problem.}
	\label{fig:lossFunctions}
\end{figure*}


\section{Computational experiments}\label{sec:computations}

We now discuss computations with the proposed formulations in SVM problems. The results are presented in three main subsections. First, \S\ref{sec:compLB} is concerned with the ability to compute lower bounds for SVM problems with 0-1 loss, where the main metrics are bound quality and solution times. The main benchmarks in this part are mixed-integer optimization methods based on big-M formulations as proposed in \cite{brooks2011support}, see \S\ref{sec:bigM}. We point out that since big-M methods fail to produce meaningful bounds in most cases, the methods are tested in problems with $n=100$ datapoints (which would be considered small from a practical standpoint in the context of SVMs). Then, \S\ref{sec:statisticalsynt} is concerned with the statistical performance resulting from the formulations, where the solution of the relaxation is directly used as an estimator. In this case, the main benchmarks are alternatives based on convex optimization, and the results are presented in synthetic instances with up to $n=1,000$ datapoints. Our choice of synthetic instances for these experiments facilitates the comparison of the various estimators, since we have access to the ``ground truth" of the data generation process. Finally, in \S\ref{sec:statisticalReal} we test the methods on datasets from the UCI Machine Learning repository \cite{Dua:2019}. 

Throughout the experiments, we use Gurobi 12.0.0 to solve quadratic and mixed integer optimization problems and Mosek 11.0 to solve semidefinite optimization problems. In all cases default settings of the solvers are used -- the branch-and-bound algorithm with Gurobi uses 20 threads by default. All computations are run on a Laptop with a 12th Gen Intel Core i7-1280P CPU and 32GB RAM. 

\paragraph{Instance generation}\label{sec:instance}
We consider synthetic instances inspired by the instances in \cite{brooks2011support}: the data from each class is generated from a different Gaussian distribution, and the presence of different types of outliers may obfuscate the data. We now describe in detail the data generation process.

Specifically, let $n,p\in \mathbb{Z}_+$ be the dimension parameters, and $\sigma\in \R_+$ represents the standard deviation of the Gaussian distribution. We consider three outlier classes (none, clustered, spread). We first generate a direction $\bm{d}\in \R^p$ where each entry is generated independently from a uniform distribution in $[-1,1]$. We then set two centroids $\bm{\alpha_1}=0.5\bm{d}/\|\bm{d}\|_2$ and $\bm{\alpha_{-1}}=-0.5\bm{d}/\|\bm{d}\|_2$, and note that they are always one unit apart. Then we generate $n$ points $\bm{a_i}\in \R^{p+1}$ where $(a_i)_1=1$ (to account for an intercept) and the remaining coordinates depend on the outlier class, and are generated as follows:

\noindent$\bullet$ \textbf{none}  With $0.5$ probability the remainder coordinates are generated from $\mathcal{N}(\bm{\alpha_1},\sigma^2\bm{I})$ and set the label $y_i=1$, and with $0.5$ probability the remainder coordinates are generated from $\mathcal{N}(\bm{\alpha_{-1}},\sigma^2\bm{I})$ and set the label $y_i=-1$. The top row of Figure~\ref{fig:dataset} illustrates a generation of this class of dataset with $p=2$ and $\sigma\in \{0.2,0.5,1.0\}$.

\noindent$\bullet$ \textbf{clustered} With $0.45$ probability the remainder coordinates are generated from $\mathcal{N}(\bm{\alpha_1},\sigma^2\bm{I})$ and set the label $y_i=1$, with $0.45$ probability the remainder coordinates are generated from $\mathcal{N}(\bm{\alpha_{-1}},\sigma^2\bm{I})$ and set the label $y_i=-1$, and with $0.1$ probability the remainder coordinates are generated from $\mathcal{N}(10\bm{\alpha_{-1}},0.001\sigma^2\bm{I})$ and set the label $y_i=1$. Figure~\ref{fig:dataset} (d) illustrates a generation of this class of dataset with $p=2$ and $\sigma=0.5$.

\noindent$\bullet$ \textbf{spread} An expected 45\% of the points are generated from $\mathcal{N}(\bm{\alpha_1},\sigma^2\bm{I})$ and set the label $y_i=1$, 45\% are generated from $\mathcal{N}(\bm{\alpha_{-1}},\sigma^2\bm{I})$ and set the label $y_i=-1$, 5\% of the points are generated from $\mathcal{N}(\bm{\alpha_1},100\sigma^2\bm{I})$ and set the label $y_i=1$ and 5\% of the points are generated from $\mathcal{N}(\bm{\alpha_{-1}},100\sigma^2\bm{I})$ and set the label $y_i=-1$.  Figure~\ref{fig:dataset} (e) illustrates a generation of this class of dataset with $p=2$ and $\sigma=0.5$. 

In all cases, the \emph{ideal} Bayes classifier is the line perpendicular to $\bm{\bm{\alpha_1}}-\bm{\alpha_{-1}}$ going through the origin, that is, $\bm{\hat w^\top}=(0\; \bm{d^\top})$. 

\begin{figure*}[!h]
	\centering
\subfloat[ \textbf{none} with $\sigma=0.2$]{\includegraphics[width=0.33\textwidth,trim={10cm 5.5cm 10cm 5.5cm},clip]{./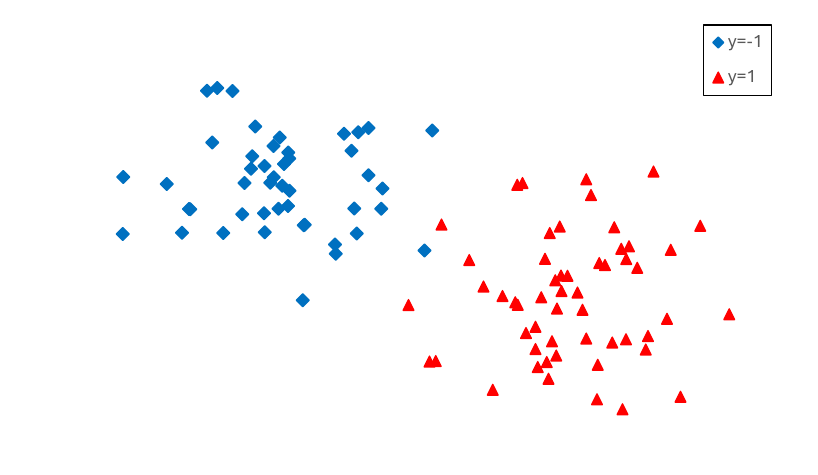}}\hfill
	\subfloat[ \textbf{none} with $\sigma=0.5$]{\includegraphics[width=0.33\textwidth,trim={10cm 5.5cm 10cm 5.5cm},clip]{./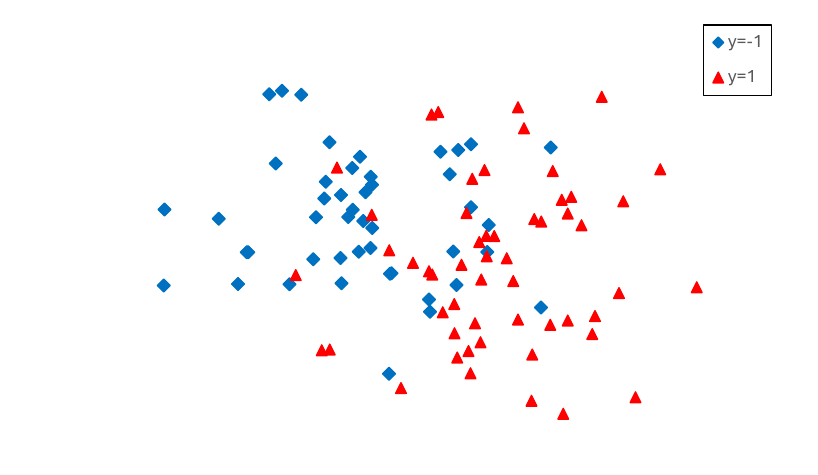}}\hfill
	\subfloat[ \textbf{none} with $\sigma=1.0$]{\includegraphics[width=0.33\textwidth,trim={10cm 5.5cm 10cm 5.5cm},clip]{./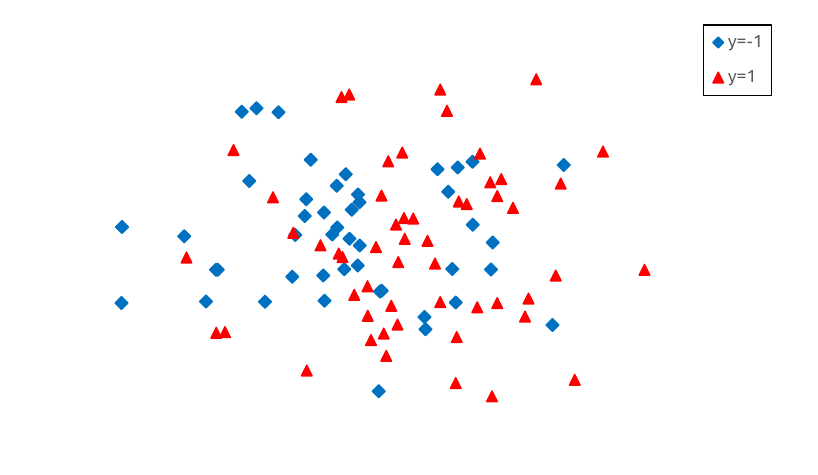}}
	\newline
	\hfill\subfloat[ \textbf{clustered} with $\sigma=0.5$]{\includegraphics[width=0.33\textwidth,trim={10cm 5.5cm 10cm 5.5cm},clip]{./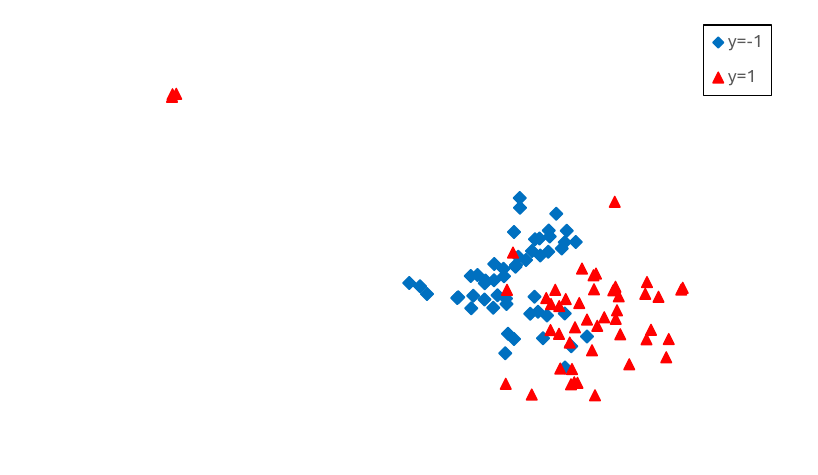}}\hspace{2cm}
	\subfloat[ \textbf{spread} with $\sigma=0.5$]{\includegraphics[width=0.33\textwidth,trim={10cm 5.5cm 10cm 5.5cm},clip]{./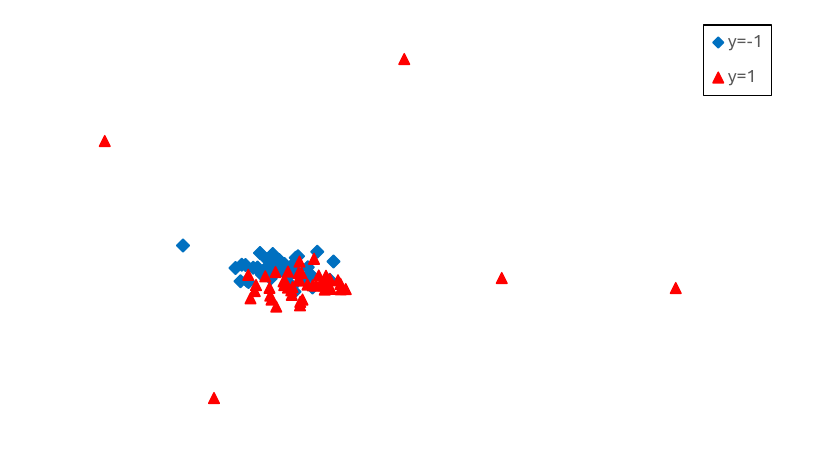}}\hfill
	\caption{\small Sample synthetic datasets with $p=2$ and $n=100$.}
	\label{fig:dataset}
\end{figure*}

\subsection{Computing lower bounds}\label{sec:compLB}
In this section we test the ability of the relaxations proposed in Proposition~\ref{prop:formulation} to generate lower bounds for the cardinality constrained problem \eqref{eq:BigMCardinality}, and compare them with standard mixed integer optimization software. In this section we provide only a limited discussion of the runtimes of the conic relaxations and elaborate on this topic in later subsections.

\paragraph{Methods} We compare three methods to produce lower bounds for \eqref{eq:BigMCardinality}.

\noindent $\bullet$ \textbf{gurobi} Default Gurobi based on formulation \eqref{eq:BigMCardinality}, with $M=1,000$ and a time limit of 10 minutes.

\noindent $\bullet$ \textbf{conic1} The semidefinite formulation given in Proposition~\ref{prop:formulation} where $\mathcal{L}=\{\{i\}:  i\in [n]\}$ is the collection of all singletons.

\noindent $\bullet$ \textbf{conic2} The semidefinite formulation given in Proposition~\ref{prop:formulation} where $\mathcal{L}=\{\{i,j\}:  1\leq i<j\leq n\}$ is the collection of all sets of cardinality two.

\paragraph{Instances} We report results on synthetic instances for all three classes of outliers, where the number of datapoints is $n=100$, number of features $p\in\{10,20,30,50\}$, standard deviations $\sigma\in\{0.5, 1.0\}$ and the maximum number of misclassifications $k\in \{10, 20, 30\}$. For each combination of parameters, we generate five instances.

In addition of the settings reported in depth in the paper, we briefly comment on other settings we also tested in preliminary computations: $\bullet$ If $p\leq 5$, then the MIO problems are in most cases infeasible (the conic relaxations are still feasible); $\bullet$ If $\sigma=0.2$, then the data is almost perfectly separable and the MIO method finds optimal solutions in seconds (and is thus preferable to the continuous relaxations proposed); $\bullet$ If $n\geq 200$, then the MIO method does not terminate in any instance and results in large optimality gaps, regardless of the value of $\sigma$ (the conic relaxations result in better lower bounds, similarly to the setting $p=50$ reported here).

\paragraph{Summary of results} While MIO methods can sometimes quickly solve problems to optimality if the number of features $p$ or the noise $\sigma$ are small, they fail in most cases with large optimality gaps. Conic relaxations are, in fact, capable of producing better lower bounds in cases with large values of $p$, while requiring a fraction of the computational time and resources. In particular, \textbf{conic1} can consistently achieve reasonable optimality gaps in less than a second. 

\paragraph{In depth discussion} Tables~\ref{tab:computational05} and \ref{tab:computational10} present the results in instances with $\sigma=0.5$ and $\sigma=1.0$, respectively. Each row represents an average over at most 15 instances with similar parameters, with 5 instances corresponding to each of the three classes of datasets. Note that we only report results for instances where \textbf{gurobi} is able to find a feasible solution, therefore some rows may include fewer than 15 instances. In each row we indicate: the number of features $p$ and budget $k$ for misclassifications; for \textbf{gurobi} we indicate the average time in seconds (with timeouts counting as 600 seconds), the average gap at termination, the average number of branch-and-bound nodes explored, the number of instances where a feasible solution is found and the number of instances where an optimal solution is found; for the \textbf{conic} relaxations, we indicate the time required to solve the relaxation and the optimality gap computed as $\texttt{gap}:=\left(\zeta_{gurobi}-\zeta_{conic}\right)/\zeta_{gurobi}$, where $\zeta_{gurobi}$ is the objective value of the best solution found by \textbf{gurobi} and $\zeta_{conic}$ is the objective value of the relaxation.

\begin{table}[!h]
\begin{center}
	\caption{\small Computational results with $\sigma=0.5$ in synthetic feasible instances. Here, the ``\# feas" column shows the number of instances (out of 15) where a feasible solution was found, and the averages reported correspond to these instances only. We mark in \textbf{bold} entries where a conic relaxation can produce on average better bounds than branch-and-bound methods. The symbol ``$\dagger$" indicates that numerical issues were encountered.} 
	\label{tab:computational05}
	\begin{tabular}{ c c |c  c c c c | c c| c c}
		\hline
		\multirow{2}{*}{$\bm{p}$}&\multirow{2}{*}{$\bm{k}$}&\multicolumn{5}{c|}{\underline{\textbf{gurobi}}}&\multicolumn{2}{c|}{\underline{\textbf{conic1}}}&\multicolumn{2}{c}{\underline{\textbf{conic2}}}\\
        &&\textbf{time}&\textbf{gap}&\textbf{nodes}&\textbf{\# feas}&\textbf{\# opt}&\textbf{time}&\textbf{gap}&\textbf{time}&\textbf{gap}\\
		\hline
		\multirow{3}{*}{10}&10&51.2&0.0\%&650,246&13&13&0.0&95.6\%&3.3&81.5\%\\	&20&147.9&0.9\%&2,358,861&15&14&0.0&84.6\%&3.2&42.1\%\\        &30&211.6&0.0\%&3,662,694&15&15&0.0&74.3\%&2.8&28.2\%\\
		\hline	
        \multirow{3}{*}{20}&10&218.0&1.2\%&2,366,233&15&14&0.1&84.5\%&12.6&54.4\%\\	&20&588.1&43.7\%&7,300,333&15&1&0.1&70.6\%&\textbf{11.5}&\textbf{25.7\%}\\        &30&600.0&48.3\%&7,875,416&15&0&0.1&62.9\%&\textbf{11.0}&\textbf{12.2\%}\\
		\hline	
        \multirow{3}{*}{30}&10&534.6&33.3\%&4,281,846&15&4&0.2&73.9\%&23.1&33.4\%\\	&20&600.0&69.7\%&5,419,954&15&0&\textbf{0.2}&\textbf{62.3\%}&\textbf{23.4}&\textbf{13.7\%}\\        &30&600.0&66.0\%&5,955,912&15&0&\textbf{0.2}&\textbf{55.6\%}&\textbf{21.2}&\textbf{7.9\%}\\
		\hline	
        \multirow{3}{*}{50}&10&600.0&53.9\%&3,689,297&15&0&\textbf{0.6}&\textbf{46.6\%}&$\dagger$&$\dagger$\\	&20&600.0&74.8\%&4,215,349&15&0&\textbf{0.6}&\textbf{41.6\%}&$\dagger$&$\dagger$\\        &30&600.0&74.9\%&4,278,767&15&0&\textbf{0.6}&\textbf{37.7\%}&$\dagger$&$\dagger$\\
		\hline	
	\end{tabular}
    \end{center}
\end{table}

\begin{table}[!h]
\begin{center}
	\caption{\small Computational results with $\sigma=1.0$ in synthetic feasible instances.  Here, the ``\# feas" column shows the number of instances (out of 15) where a feasible solution was found, and the averages reported correspond to these instances only. We mark in \textbf{bold} entries where a conic relaxation can produce on average better bounds than branch-and-bound methods. The symbol ``$\dagger$" indicates that numerical issues were encountered.} 
	\label{tab:computational10}
	\begin{tabular}{ c c |c  c c c c | c c| c c}
		\hline
		\multirow{2}{*}{$\bm{p}$}&\multirow{2}{*}{$\bm{k}$}&\multicolumn{5}{c|}{\underline{\textbf{gurobi}}}&\multicolumn{2}{c|}{\underline{\textbf{conic1}}}&\multicolumn{2}{c}{\underline{\textbf{conic2}}}\\
        &&\textbf{time}&\textbf{gap}&\textbf{nodes}&\textbf{\# feas}&\textbf{\# opt}&\textbf{time}&\textbf{gap}&\textbf{time}&\textbf{gap}\\
		\hline
		\multirow{3}{*}{10}&10&162.7&0.0\%&1,538,726&1&1&0.0&99.6\%&2.9&99.1\%\\	&20&575.9&83.2\%&7,395,964&14&1&0.0&94.9\%&3.2&89.8\%\\        &30&600.0&46.5\%&10,689,069&15&1&0.0&77.4\%&3.1&59.4\%\\
		\hline	
        \multirow{3}{*}{20}&10&600.0&95.1\%&4,016,287&10&0&0.1&97.1\%&\textbf{13.0}&\textbf{95.1\%}\\	&20&600.0&85.0\%&6,543,946&15&0&\textbf{0.1}&\textbf{76.8\%}&\textbf{11.4}&\textbf{62.8\%}\\        &30&600.0&76.7\%&7,634,594&15&0&\textbf{0.1}&\textbf{58.2\%}&\textbf{11.4}&\textbf{36.7\%}\\
		\hline	
        \multirow{3}{*}{30}&10&600.0&80.6\%&3,855,544&15&0&\textbf{0.2}&\textbf{79.3\%}&\textbf{22.4}&\textbf{68.3\%}\\	&20&600.0&85.3\%&4,728,503&15&0&\textbf{0.2}&\textbf{58.0\%}&\textbf{22.0}&\textbf{37.5\%}\\        &30&600.0&79.9\%&5,484,409&15&0&\textbf{0.2}&\textbf{40.0\%}&\textbf{21.2}&\textbf{14.5\%}\\
		\hline	
        \multirow{3}{*}{50}&10&600.0&78.2\%&3,196,022&15&0&\textbf{0.7}&\textbf{44.2\%}&\textbf{67.2}&\textbf{25.5\%}\\	&20&600.0&85.1\%&3,902,330&15&0&\textbf{0.6}&\textbf{31.6\%}&$\dagger$&$\dagger$\\        &30&600.0&83.7\%&4,305,493&15&0&\textbf{0.7}&\textbf{20.6\%}&$\dagger$&$\dagger$\\
		\hline	
	\end{tabular}
    \end{center}
\end{table}

We observe that \textbf{gurobi} is able to solve some problems optimally through extensive enumeration, particularly in settings where the number of features is small ($p=10$), there is small to medium noise ($\sigma=0.5$) and the maximum number of misclassifications is limited ($k=10$). However, in most other scenarios the resulting end gaps are consistently larger than 50\%, and in several cases larger than 80\% -- these results are similar to those reported in \cite{brooks2011support}.
Interestingly, conic relaxations seem to excel in the settings where \textbf{gurobi} performs worse, and struggle when \textbf{gurobi} performs best.  Indeed, if $p=10$, then Gurobi can solve most of the problems to optimality, while conic relaxations result in large optimality gaps; in some cases as large as 99\%. On the other hand, if $p\geq 30$, then \textbf{gurobi} struggles to prove optimality gaps below 50\% and conic relaxations consistently produce better bounds: for example, if $p=30$ and $\sigma=1.0$, then the average optimality gap proven by \textbf{gurobi} of 79.9\% is substantially reduced to 40.0\% and 14.5\% by \textbf{conic1} and \textbf{conic2}, respectively. The better bounds of the conic relaxations are also achieved in a fraction of the computational time: while \textbf{gurobi} uses all of 10 minutes and 20 threads, \textbf{conic1} and \textbf{conic2} terminate within one and 30 seconds, respectively, and run in one thread. 

\paragraph{Numerical issues} We point out that in instances with $p=50$, \textbf{conic2} results in numerical issues in several instances. In most situations with numerical issues, the Mosek solver terminates with an ``unknown" status and fails to produce a solution; in some cases, it also produces a solution that, upon inspection, is extremely low quality. We observe that different standardizations of the data may resolve numerical difficulties in some of the instances, while creating issues in other instances; in the table, we report results without standardization. Recall that \textbf{conic2} requires the addition of ${n\choose 2}=4,950$ positive semi-definite constraints of order 3, each requiring the addition of two additional variables and fairly dense linear constraints involving all $p=50$ of the original variables $\bm{w}$. In contrast, \textbf{conic1}, which requires only $n=100$ rotated cone constraints, does not suffer from numerical instabilities in these instances (or any of the larger instances considered in later subsections).

\subsection{Statistical performance in synthetic instances}\label{sec:statisticalsynt}
In this section we test the ability of the relaxations proposed in Proposition~\ref{prop:formulation} to produce good estimators to robust SVM problems. In each experiment, we generate a training and validation set (both contaminated by outliers for classes ``clustered" and ``spread"), each with $n$ observations, and a testing set without outliers and $100,000$ observations. For each method tested, we solve the training problem on the training set for 100 different values of the hyperparameters, choose the solution that results in fewer misclassified points in validation, and report the (out-of-sample) misclassification rate on the testing set, as well as the total time required to perform cross-validation (i.e., solving 100 training problems). For each combination of instance class and dimensions, we repeat this process 20 times and report averages and standard deviations across all replications.

\paragraph{Methods} We compare the following approaches for learning SVMs.

\noindent $\bullet$ \textbf{hinge} Using the hinge loss SVM \eqref{eq:hinge}, formulated as a convex quadratic program and solved using Gurobi. For cross-validation, we set $\lambda=\beta/(1-\beta)$ where the 100 values of $\beta$ are uniformly selected in the interval $(0,1)$.

\noindent $\bullet$ \textbf{robustSVM} The method for robust SVMs proposed in \cite{bertsimas2019robust}, which calls for the solution of 
	\begin{align*}
		\min_{\bm{w}\in \R^p,\bm{\xi}\in \R_+^n}\;&\sum_{i=1}^n \xi_i\\
		\text{s.t.}\:&y_i\left(\bm{a_i^\top w}\right)-\lambda \|\bm{w}\|_1\geq 1-\xi_i\quad \forall i\in \{1,\dots,n\},
	\end{align*}
solved as a linear program (after the usual linearization of the absolute values) using Gurobi.
Note that the choice of the $\ell_1$ norm to penalize $\bm{w}$ matches the one used in the experiments in \cite{bertsimas2019robust}. The parameters $\lambda$ for cross-validation are identical to the \textbf{hinge} method. 

\noindent $\bullet$ \textbf{0-1 loss} The cardinality constrained problem \eqref{eq:BigMCardinality} solved using Gurobi with a time limit of three minutes per cardinality choice $k$. Due to the large computational burden and the inability to produce meaningful optimality gaps in instances with $n\geq 200$, we use this method on instances with $n=100$ only. For cross-validation, we train for all values\footnote{While only 50 values of the hyperparameter are tested in instances with $n=100$ (instead of 100 like other methods), no good solutions can be obtained for the other choices; this set of parameters is exhaustive.} $k\in [n/2]$.  

\noindent $\bullet$ \textbf{conic1} The semidefinite relaxation given in Proposition~\ref{prop:formulation} for a cardinality constrained problem (solved using Mosek), where $\mathcal{L}=\{\{i\}:  i \in [n]\}$ is the collection of all singletons. For cross-validation, we select 100 uniformly-spaced right-hand side values $k$ in the interval $(0,n/2)$, loosely corresponding to the intuition that good estimators should misclassify at most half the points.

\noindent $\bullet$ \textbf{conic2} The semidefinite relaxation given in Proposition~\ref{prop:formulation} for a cardinality constrained problem (solved using Mosek), where $\mathcal{L}=\{\{i,j\}: 1\leq i<j\leq n\}$ is the collection of all sets of cardinality two. Due to the large computational burden, we use this method on instances with $n=100$ only. The choice of hyperparameters is identical to \textbf{conic1}.

\noindent $\bullet$ \textbf{hinge+conic1} A combination of the two methods, where 50 hinge SVMs are solved and 50 conic SVMs are solved (resulting in the same number of 100 total models tested), and the best model is selected via cross-validation. The choice of hyperparameters tested is identical to those for methods \textbf{hinge} and \textbf{conic1}, except that only 50 are tested for each method.

\noindent $\bullet$ \textbf{bayes} The bayes estimator corresponding to selecting $\bm{\hat w^\top}=(0\; \bm{d^\top})$. This ``method" serves as a benchmark as it is the best possible estimator if the data-generation process is known, but is evidently not implementable in practice.

\paragraph{Instances} We report results on synthetic instances for all three classes of outliers, where the number of datapoints is $n\in \{100,200,500,1000\}$, number of features $p\in\{3,5,10,30\}$, standard deviations $\sigma\in\{0.2,0.5, 1.0\}$. For each combination of parameters, we generate 20 instances.

\paragraph{Summary of results} While \textbf{hinge} performs well in instances without outliers, it can completely break down with just a few outliers. In contrast, the proposed \textbf{conic1} is able to achieve a robust performance across all settings within a reasonable computational time. The combination of \textbf{hinge+conic1} results in an ideal estimator in most scenarios.

\paragraph{Discussion on the quality of the estimators}

Tables~\ref{tab:misclassification100Synt} and \ref{tab:misclassification200Synt} show the results in synthetic instances with data points $n=100$ and $n\geq 200$, respectively. The reason for presenting the case $n=100$ separately is two-fold: \textit{(i)} methods \textbf{0-1 loss} and \textbf{conic2} are only used with $n=100$ due to their computational burden; \textit{(ii)} instances with fewer datapoints (and thus less signal-to-noise ratio) exhibit a more pronounced pathological behavior in the presence of outliers.

\begin{table}[!h]
\begin{center}
\caption{Out-of-sample misclassification rate with synthetic instances, as a function of outlier generation and noise parameter $\sigma$, for problems with $n=100$ and $p\in \{3,5,10,30\}$. We highlight in \textbf{bold} the best method for a given class of instances.} 
\label{tab:misclassification100Synt}
\setlength{\tabcolsep}{1pt}
\begin{tabular}{ c c | c |c |c }
\hline
$\bm{\sigma}$&\textbf{method}&\textbf{none}&\textbf{clustered}&\textbf{spread}\\
\hline
\multirow{6}{*}{0.2}&hinge&$\bm{1.0\%\pm0.4\%}$&$9.2\%\pm 17.0\%$&$1.3\%\pm 0.7\%$\\
&robustSVM&$1.9\%\pm0.1\%$&$8.0\%\pm 12.5\%$&$2.3\%\pm 1.5\%$\\
&0-1 loss&$1.1\%\pm0.4\%$&$\bm{1.2\%\pm 0.7\%}$&$\bm{1.2\%\pm 0.6\%}$\\
&conic1&$1.3\%\pm0.8\%$&$2.4\%\pm 2.8\%$&$1.6\%\pm 0.9\%$\\
&conic2&$1.3\%\pm0.7\%$&$1.9\%\pm 1.0\%$&$1.6\%\pm 0.9\%$\\
&hinge+conic1&$1.2\%\pm0.7\%$&$1.8\%\pm 2.0\%$&$1.5\%\pm 0.9\%$\\
&bayes&$0.6\%\pm0.0\%$&$0.6\%\pm 0.0\%$&$0.6\%\pm 0.0\%$\\
\hline
\multirow{6}{*}{0.5}&hinge&$17.9\%\pm1.9\%$&$27.5\%\pm 11.5\%$&$19.9\%\pm 2.7\%$\\
&robustSVM&$19.4\%\pm3.0\%$&$28.3\%\pm 10.7\%$&$21.5\%\pm 4.0\%$\\
&0-1 loss&$18.8\%\pm2.3\%$&$\bm{19.7\%\pm 2.9\%}$&$19.4\%\pm 2.8\%$\\
&conic1&$18.2\%\pm1.7\%$&$21.7\%\pm 6.1\%$&$19.0\%\pm 2.5\%$\\
&conic2&$18.7\%\pm2.3\%$&$20.8\%\pm 4.4\%$&$19.5\%\pm 2.6\%$\\
&hinge+conic1&$\bm{17.8\%\pm1.5\%}$&$20.8\%\pm 5.6\%$&$\bm{18.9\%\pm 2.7\%}$\\
&bayes&$15.9\%\pm0.1\%$&$15.9\%\pm 0.1\%$&$15.9\%\pm 0.1\%$\\
\hline
\multirow{6}{*}{1.0}&hinge&$\bm{34.0\%\pm2.6\%}$&$40.6\%\pm 6.8\%$&$36.4\%\pm 3.8\%$\\
&robustSVM&$35.3\%\pm3.7\%$&$41.8\%\pm 6.5\%$&$37.3\%\pm 4.3\%$\\
&0-1 loss&$35.1\%\pm2.7\%$&$\bm{36.9\%\pm 5.5\%}$&$35.4\%\pm 3.1\%$\\
&conic1&$34.4\%\pm2.7\%$&$37.7\%\pm 4.6\%$&$\bm{35.1\%\pm 3.0\%}$\\
&conic2&$34.6\%\pm2.5\%$&$38.3\%\pm 5.6\%$&$35.2\%\pm 3.0\%$\\
&hinge+conic1&$\bm{34.0\%\pm2.6\%}$&$37.4\%\pm 4.7\%$&$\bm{35.1\%\pm 3.1\%}$\\
&bayes&$30.9\%\pm0.1\%$&$30.9\%\pm 0.1\%$&$30.9\%\pm 0.1\%$\\
\hline

\end{tabular}
\end{center}
\end{table}

\begin{table}[!h]
\begin{center}
\caption{Out-of-sample misclassification rate with synthetic instances, as a function of outlier generation and noise parameter $\sigma$, for problems with $n\in \{200,500,1000\}$ and $p\in \{3,5,10,30\}$. We highlight in \textbf{bold} the best method for a given class of instances.} 
\label{tab:misclassification200Synt}
\setlength{\tabcolsep}{1pt}
\begin{tabular}{ c c | c |c |c }
\hline
$\bm{\sigma}$&\textbf{method}&\textbf{none}&\textbf{clustered}&\textbf{spread}\\
\hline
\multirow{5}{*}{0.2}&hinge&$\bm{0.7\%\pm0.1\%}$&$2.1\%\pm 7.1\%$&$\bm{0.8\%\pm 0.3\%}$\\
&robustSVM&$1.0\%\pm0.4\%$&$2.2\%\pm 4.8\%$&$1.3\%\pm 0.6\%$\\
&conic1&$0.8\%\pm0.3\%$&$1.1\%\pm 1.2\%$&$0.9\%\pm 0.4\%$\\
&hinge+conic1&$0.8\%\pm0.3\%$&$\bm{0.9\%\pm 1.0\%}$&$\bm{0.8\%\pm 0.3\%}$\\
&bayes&$0.6\%\pm0.0\%$&$0.6\%\pm 0.0\%$&$0.6\%\pm 0.0\%$\\
\hline
\multirow{5}{*}{0.5}&hinge&$\bm{16.5\%\pm0.7\%}$&$19.8\%\pm 6.5\%$&$17.6\%\pm 1.7\%$\\
&robustSVM&$16.9\%\pm1.2\%$&$20.6\%\pm 6.4\%$&$18.2\%\pm 2.3\%$\\
&conic1&$16.6\%\pm0.8\%$&$17.8\%\pm 2.9\%$&$\bm{17.0\%\pm 1.2\%}$\\
&hinge+conic1&$\bm{16.5\%\pm0.7\%}$&$\bm{17.3\%\pm 2.4\%}$&$17.1\%\pm 1.2\%$\\
&bayes&$15.9\%\pm0.1\%$&$15.9\%\pm 0.1\%$&$15.9\%\pm 0.1\%$\\
\hline
\multirow{5}{*}{1.0}&hinge&$\bm{31.9\%\pm1.2\%}$&$35.1\%\pm 4.4\%$&$33.7\%\pm 2.4\%$\\
&robustSVM&$32.4\%\pm1.7\%$&$36.0\%\pm 5.0\%$&$34.0\%\pm 2.8\%$\\
&conic1&$32.1\%\pm1.3\%$&$33.8\%\pm 2.9\%$&$\bm{32.4\%\pm 1.4\%}$\\
&hinge+conic1&$32.0\%\pm1.3\%$&$\bm{33.5\%\pm 2.8\%}$&$32.5\%\pm 1.5\%$\\
&bayes&$30.9\%\pm0.1\%$&$30.9\%\pm 0.1\%$&$30.9\%\pm 0.1\%$\\
\hline
\end{tabular}
\end{center}
\end{table}

We observe that in regimes with no outliers, \textbf{hinge} tends to be the better estimator but the 0-1 and conic losses result in a similar performance, with a difference of less than one percentage point in the misclassification rate (and \textbf{robustSVM} is the worst estimator in this setting). However, in regimes with outliers, the 0-1 loss and the conic proxies result in significantly better performance than \textbf{hinge} and \textbf{robustSVM}, having better average out-of-sample misclassification \emph{and} smaller variance. This phenomenon is particularly visible in Table~\ref{tab:misclassification100Synt} in instances with ``clustered" outliers and $\sigma=0.2$, and Figure~\ref{fig:oos} depicts detailed results in this setting. We observe that the large variance of the \textbf{hinge} estimator is due to a subset of instances where the estimator breaks down, resulting in solutions with over 50\% misclassification rate. This phenomenon is particularly acute in low signal-to-noise settings caused by a small number of datapoints. In contrast, \textbf{conic1} does not exhibit such a pathological behavior and performs well consistently. 

\begin{figure}[!h]
	\centering
	\subfloat[${n=100}$]{\includegraphics[width=0.40\columnwidth,trim={13cm 6cm 14cm 6cm},clip]{./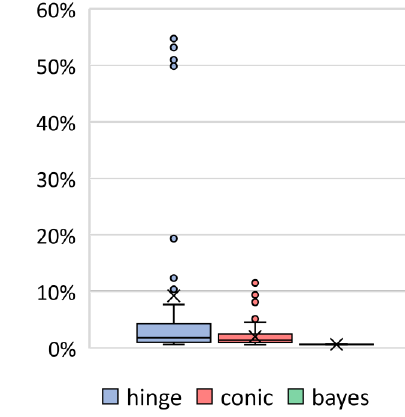}}\hfill
	\subfloat[${n=200}$]{\includegraphics[width=0.40\columnwidth,trim={13cm 6cm 14cm 6cm},clip]{./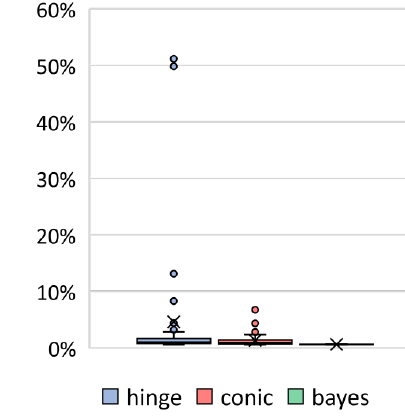}}\hfill
	\newline
	\subfloat[${n=500}$]{\includegraphics[width=0.40\columnwidth,trim={13cm 6cm 14cm 6cm},clip]{./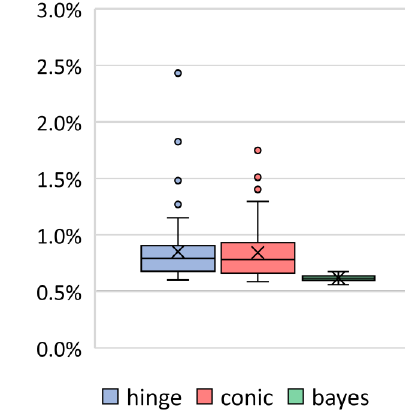}}\hfill
	\subfloat[${n=1,000}$]{\includegraphics[width=0.40\columnwidth,trim={13cm 6cm 14cm 6cm},clip]{./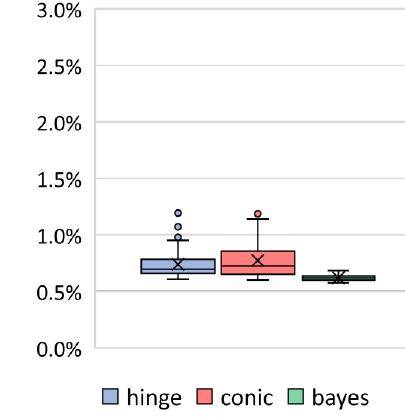}}
	\caption{Distribution of out-of-sample misclassification for data with clustered outliers and $\sigma=0.2$, as a function of the number of datapoints $n$. In instances with small $n$ (top row), the hinge estimator has a probability of breaking down, resulting in out-of-sample misclassifications above 50\%; \emph{the conic loss reduces the average misclassification rate by an order-of-magnitude}. Moreover, the conic estimator performs consistently well in all settings.}
	\label{fig:oos}
\end{figure}

In instances with $n=100$ and clustered outliers, the \textbf{0-1 loss} is the best estimator, but \textbf{conic1} and \textbf{conic2} have a competitive performance as well -- in the case of \textbf{conic1}, at a fraction of the computational time (see next section). In settings with no outliers or spread outliers, \textbf{conic1} is competitive and can even outperform the \textbf{0-1 loss}. Moreover, the results also indicate that training \textbf{hinge+conic1} is a simple approach that results in the best of the worlds without any obvious drawback. Therefore, we do not advocate using \textbf{conic1} as a replacement of standard SVMs with the hinge loss, but rather as a complement in high-stakes situations where outliers could be present and potentially lead to harmful decisions.

\paragraph{Discussion on computational times}

Table~\ref{tab:solTimesSynt} presents the computational times required to select a model using cross-validation as described above. We found that the computational times depend on $n$ and $p$ but not on the instance class, thus we present aggregated results across all instance classes. Note that there is an obvious discrepancy between the computational times reported before in Tables~\ref{tab:computational05} and \ref{tab:computational10}: indeed, rather than describing the time to solve a single SVM instance, Table~\ref{tab:solTimesSynt} reports the time of carrying out a complete cross-validation pass (involving solving 100 instances and additional tasks). Thus, a rather unassuming 20 seconds to solve a single instance turns into an hour-long computation to find the best hyperparameter via cross-validation.  

\begin{table}[!h]
\begin{center}
\caption{Average solution times and standard deviations (in seconds) across all instance classes required to solve the 100 training problems for cross-validation. } 
\label{tab:solTimesSynt}
\begin{tabular}{c c | c |c |c |c}
\hline
$\bm{m}$&\textbf{method}&$\bm{p=3}$&$\bm{p=5}$&$\bm{p=10}$&$\bm{p=30}$\\
\hline
\multirow{5}{*}{$\bm{100}$}&hinge&$0.7\pm 0.2$ &$0.7\pm 0.2$&$0.7\pm 0.1$&$0.9\pm0.2$\\
&robustSVM&$0.4\pm 0.1$ &$0.4\pm 0.1$&$0.4\pm 0.1$&$0.7\pm0.2$\\
&0-1 loss&$560.5\pm 742.4$ &$910.2\pm 1,049.2$&$2,067.7\pm 1,539.4$&$3,582.6\pm661.2$\\
&conic1&$2.4\pm 0.2$&$2.7\pm 0.3$&$4.0\pm 0.7$&$39.3\pm 10.4$\\
&conic2&$354.8\pm 198.8$&$377.9\pm 212.8$&$523.1\pm 368.3$&$3,210.9\pm 708.1$\\
&hinge+conic1&$4.1\pm 0.1$&$4.3\pm 0.1$&$5.3\pm 0.4$&$14.6\pm 1.6$\\
\hline
\multirow{4}{*}{$\bm{200}$}&hinge&$1.1\pm 0.3$ &$1.4\pm 0.5$&$3.2\pm 1.4$&$3.4\pm 1.5$\\
&robustSVM&$0.4\pm 0.1$ &$0.6\pm 0.2$&$0.6\pm 0.2$&$1.2\pm 0.3$\\
&conic1&$3.3\pm 0.7$ &$4.7\pm 2.3$&$7.2\pm 0.8$&$55.0\pm 8.1$\\
&hinge+conic1&$12.4\pm 1.7$ &$19.6\pm 6.6$&$11.7\pm 2.0$&$42.6\pm 4.4$\\
\hline
\multirow{4}{*}{$\bm{500}$}&hinge&$10.0\pm 4.8$ &$10.3\pm 4.6$&$11.6\pm 6.4$&$16.6\pm 5.4$\\
&robustSVM&$1.0\pm 0.3$ &$1.3\pm 0.4$&$2.0\pm 0.7$&$4.5\pm 1.3$\\
&conic1 &$7.2\pm 1.1$&$7.6\pm 1.4$&$11.7\pm 4.6$&$101.0\pm 28.7$\\
&hinge+conic1&$51.9\pm 10.5$ &$62.0\pm 19.8$&$86.0\pm 36.5$&$130.8\pm 32.0$\\
\hline
\multirow{4}{*}{$\bm{1,000}$}&hinge&$1.7\pm 0.5$ &$1.9\pm 0.5$&$2.5\pm 0.4$&$3.4\pm 1.3$\\
&robustSVM&$2.7\pm 0.9$ &$3.6\pm 1.1$&$5.5\pm 1.4$&$4.9\pm 2.2$\\
&conic1 &$10.4\pm 2.9$&$12.0\pm 4.8$&$18.5\pm 7.3$&$138.1\pm 69.3$\\
&hinge+conic1&$116.0\pm 36.2$ &$142.7\pm 61.0$&$320.5\pm 126.5$&$499.1\pm 110.4$\\
\hline
\end{tabular}
\end{center}
\end{table}

We observe that while \textbf{conic2} and the \textbf{0-1 loss} can be too expensive to use in a cross-validation setting, \textbf{conic1} is suitable for this use, as the computations can be carried out in under three minutes in all settings considered. Moreover, due to the reformulations discussed in \S\ref{sec:SVM}, \textbf{conic1} scales linearly with the number of datapoints $n$ and can be used if a large number of datapoints are collected. However, the performance degrades with the number of features $p$, and thus may be ineffective in large-dimensional datasets.

\subsection{Performance on real instances}\label{sec:statisticalReal}
We test the proposed methods in five datasets from the UCI machine learning repository \citep{Dua:2019}, whose names and dimensions (along with computational results) are reported in Table~\ref{tab:resultsReal}. We split each dataset randomly with 35\% of the observations in a training set, 35\% in a validation set and 30\% in the testing set. Additionally, we replicate the corruption used by \cite{jammal2020robust} and \cite{nguyen2013algorithms} in their experiments: given a noise parameter $\tau\in [0,0.5)$, we introduce outliers by flipping the label of each observation of the training and validation set with probability $\tau$ (the test set is not corrupted by outliers). We then perform cross-validation as described in \S\ref{sec:statisticalsynt}. For each dataset and value of the noise parameter $\tau$, we perform 20 random splits of train/validation/test and report average results and standard deviations over these replications.

The results are highly dependent on the specific dataset: for example \textbf{hinge} is better across all noise parameters for the sonar dataset, whereas \textbf{conic1} is better across all parameters for the breast cancer dataset. In general, no single estimator consistently outperforms or is outperformed by the others. For example, while \textbf{robustSVM} was consistently the worst estimator with synthetic data, it shows good performance in some cases and is tied for best (in terms of average performance) in settings with $\tau=0.2$. If anything, the computations show that real data are less structured than synthetic data, and one should be careful with extrapolations from the synthetic case to the real case. Nonetheless, in our computations, the combination of different estimators by method \textbf{ hinge +conic1} still results in the best performance overall. 

\begin{table*}[!h t b]
\caption{Results with real datasets, where $\tau$ is the probability of a label swap and each row shows averages and standard deviations over 20 splits of training/validation/testing. Column ``time" represents the total time required to solve 100 SVM problems (required for cross-validation). We depict in bold the best average out-of-sample accuracy for each dataset-noise combination.}
\label{tab:resultsReal}
\setlength{\tabcolsep}{2pt}
\resizebox{\textwidth}{!}{%
\begin{tabular}{c c c c | c | c c  c c}
\hline
\multirow{2}{*}{\textbf{name}}&\multirow{2}{*}{\textbf{n}}&\multirow{2}{*}{\textbf{p}}&\multirow{2}{*}{\textbf{method}}&\multirow{2}{*}{\textbf{time (s)}}&\multicolumn{4}{c}{\underline{\textbf{out-of-sample misclassification rate}}}\\
&&&&&$\bm{\tau=0.0}$&$\bm{\tau=0.1}$&$\bm{\tau=0.2}$&$\bm{\tau=0.3}$\\

\hline
\multirow{4}{*}{Breast cancer}&\multirow{4}{*}{196}&\multirow{4}{*}{36}&hinge&$0.6\pm 0.1$&$16.9\%\pm 4.4\%$&$18.3\%\pm 4.6\%$&$17.9\%\pm 5.0\%$&$19.4\%\pm 7.7\%$\\
&&&robustSVM&$0.5\pm 0.1$&$19.2\%\pm 6.1\%$&$18.4\%\pm 3.8\%$&$19.0\%\pm 6.7\%$&$25.9\%\pm 14.4\%$\\
&&&conic1&$19.9\pm 0.7$&$16.9\%\pm 4.1\%$&$\bm{17.3\%\pm 4.9\%}$&$\bm{17.5\%\pm 4.5\%}$&$\bm{18.5\%\pm 5.8\%}$\\
&&&hinge+conic1&$10.5\pm 0.4$&$\bm{16.7\%\pm 4.2\%}$&$18.2\%\pm 4.3\%$&$18.5\%\pm 5.8\%$&$20.4\%\pm 7.9\%$\\
\hline
\multirow{4}{*}{German credit}&\multirow{4}{*}{1,000}&\multirow{4}{*}{24}&hinge&$4.9\pm 0.5$&$25.0\%\pm 1.9\%$&$\bm{25.8\%\pm 2.3\%}$&$27.5\%\pm 2.0\%$&$\bm{29.4\%\pm 2.4\%}$\\
&&&robustSVM&$1.9\pm 0.2$&$25.9\%\pm 2.2\%$&$26.5\%\pm 2.2\%$&$27.7\%\pm 2.4\%$&$29.5\%\pm 2.2\%$\\
&&&conic1&$35.0\pm 2.8$&$28.7\%\pm 2.5\%$&$28.7\%\pm 2.4\%$&$28.8\%\pm 2.6\%$&$29.5\%\pm 2.5\%$\\
&&&hinge+conic1&$20.3\pm 1.5$&$\bm{24.9\%\pm 1.8\%}$&$\bm{25.8\%\pm 2.5\%}$&$\bm{27.1\%\pm 2.2\%}$&$30.0\%\pm 2.2\%$\\
\hline
\multirow{4}{*}{Immunotherapy}&\multirow{4}{*}{90}&\multirow{4}{*}{6}&hinge&$0.4\pm 0.0$&$\bm{20.5\%\pm 8.0\%}$&$24.1\%\pm 9.1\%$&$28.6\%\pm 10.4\%$&$34.6\%\pm 11.9\%$\\
&&&robustSVM&$0.2\pm 0.0$&$21.1\%\pm 7.0\%$&\bm{$21.8\%\pm 6.4\%$}&$\bm{22.7\%\pm 6.6\%}$&\bm{$26.1\%\pm 11.3\%$}\\
&&&conic1&$2.3\pm 0.1$&$22.1\%\pm 6.7\%$&$23.0\%\pm 6.4\%$&$24.3\%\pm 7.1\%$&$33.0\%\pm 20.5\%$\\
&&&hinge+conic1&$1.4\pm 0.0$&$\bm{20.5\%\pm 7.7\%}$&$24.5\%\pm 6.3\%$&$25.4\%\pm 6.5\%$&$34.1\%\pm 19.0\%$\\
\hline
\multirow{4}{*}{Ionosphere}&\multirow{4}{*}{351}&\multirow{4}{*}{34}&hinge&$1.0\pm 0.2$&$16.0\%\pm 3.3\%$&$19.2\%\pm 4.4\%$&$20.9\%\pm 5.0\%$&$24.0\%\pm 6.9\%$\\
&&&robustSVM&$0.9\pm 0.2$&$\bm{14.4\%\pm 3.2\%}$&$18.0\%\pm 3.3\%$&$21.0\%\pm 3.9\%$&$25.1\%\pm 5.7\%$\\
&&&conic1&$34.6\pm 3.1$&$15.3\%\pm 2.9\%$&$\bm{17.7\%\pm 3.9\%}$&$20.1\%\pm 5.5\%$&\bm{$23.7\%\pm 4.6\%$}\\
&&&hinge+conic1&$17.7\pm 1.1$&$15.6\%\pm 3.4\%$&$17.8\%\pm 3.6\%$&$\bm{19.4\%\pm 4.7\%}$&\bm{$23.7\%\pm 4.4\%$}\\
\hline
\multirow{4}{*}{Sonar}&\multirow{4}{*}{208}&\multirow{4}{*}{60}&hinge&$0.8\pm 0.2$&$25.4\%\pm 7.3\%$&$30.0\%\pm 7.6\%$&$\bm{32.7\%\pm 7.5\%}$&$\bm{42.3\%\pm 10.7\%}$\\
&&&robustSVM&$0.7\pm 0.1$&$28.9\%\pm 6.7\%$&$31.9\%\pm 7.0\%$&$33.2\%\pm 7.3\%$&$44.3\%\pm 10.3\%$\\
&&&conic1&$133.5\pm 12.1$&$30.3\%\pm 6.3\%$&$32.7\%\pm 5.2\%$&$37.4\%\pm 9.4\%$&$45.7\%\pm 8.0\%$\\
&&&hinge+conic1&$68.6\pm 6.3$&$\bm{25.2\%\pm 6.7\%}$&\bm{$\bm{29.2\%\pm 7.5\%}$}&$33.3\%\pm 7.5\%$&$42.5\%\pm 9.5\%$\\
\hline
\multirow{4}{*}{Average}&&&hinge&-&$20.8\%\pm 5.0\%$&$23.5\%\pm 5.6\%$&$25.5\%\pm 6.0\%$&\bm{$29.9\%\pm 7.9\%$}\\
&&&robustSVM&-&$21.9\%\pm 7.4\%$&$23.3\%\pm 7.1\%$&$\bm{24.7\%\pm 7.6\%}$&$30.2\%\pm 12.0\%$\\
&&&conic1&-&$22.7\%\pm 7.7\%$&$23.9\%\pm 7.7\%$&$25.6\%\pm 9.4\%$&$30.1\%\pm 13.8\%$\\
&&&hinge+conic1&-&$\bm{20.6\%\pm 6.5\%}$&$\bm{23.1\%\pm 6.7\%}$&$\bm{24.7\%\pm 7.8\%}$&${30.1\%\pm 12.8\%}$\\
\hline
\end{tabular}
}
\end{table*}

\section{Conclusions} \label{sec:conclusions}We study mixed-integer quadratic optimization problems, where binary variables are used to indicate the sign of continuous variables. Such problems arise, for example, in the context of classification in machine learning, where the sign of a given prediction indicates whether it is assigned a given label by a classifier. We provide an ideal description of the convex hull when the quadratic function is rank-one, strong inequalities to use for the general case, and discuss the implementation of the proposed formulations in the context of support vector machines. Our experiments demonstrate that the proposed formulations are able to compute better bounds than state-of-the-art mixed-integer optimization software with a fraction of the computational resources, and can be used directly as statistical estimators in problems with outliers or anomalies.

\section*{Acknowledgmenents}

Andr\'es G\'omez and Soobin Choi are supported, in part, by grants No. FA9550-24-1-0086 and FA9550-22-1-0369 from the Air Force Office of Scientific Research.  Shaoning Han is supported by the Ministry of Education, Singapore, under the Academic Research Fund Tier 1 (FY2024).

\bibliographystyle{abbrvnat}
\bibliography{main}

\appendix

\section{Proof of Proposition~\ref{corr:oneSided}.(a)}\label{sec:proofOneSided}

For convenience, we repeat the proposition to be proven concerning set 
\begin{equation*}
    \mathcal{X}_{\bm{d}}^-=\{(\bm{x},\bm{z},t)\in\mathbb{R}^n\times\{0,1\}^n\times\mathbb{R}:t\geq (\bm{d}^\top \bm{x})^2,~ x_i(1-z_i)\leq 0,~i=1,\ldots,n\}.
\end{equation*}
\begin{proposition} \label{corr:oneSidedRepeated}
  If $\bm{d} \geq \bm{0}$ (equivalently, if $\bm{d} \leq \bm{0}$), then
$$\cl\; \conv(\mathcal{X}_{\bm{d}}^-)=\Cbracket{(\bm{x},\bm{z},t)\in\mathbb{R}^n\times[0,1]^n\times\mathbb{R}:t\geq\frac{(\bm{d}^\top \bm{x})_+^2}{\min\
\Cbracket{1,\sum_{i\in \supp(\bm{d})}z_i }} + (\bm{d}^\top \bm{x})_-^2}.$$
\end{proposition}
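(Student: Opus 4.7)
The plan is to mirror the support-function argument used in the proof of Proposition~\ref{propo:rank1}: for every triple $(\bm{\alpha},\bm{\beta},\gamma)\in\R^n\times\R^n\times\R$ I will show that the linear functional $\bm{\alpha}^\top\bm{x}+\bm{\beta}^\top\bm{z}+\gamma t$ attains the same infimum over $\mathcal{X}_{\bm{d}}^-$ and over the candidate set on the right-hand side. Validity is immediate by a case split on the sign of $\bm{d}^\top\bm{x}$: if $\bm{d}^\top\bm{x}>0$, some index $i$ has $d_i>0$ and $x_i>0$, which together with $x_i(1-z_i)\le 0$ forces $z_i=1$, so $\sum_{i\in\supp(\bm{d})}z_i\ge 1$ and the bound collapses to $t\ge(\bm{d}^\top\bm{x})^2$; otherwise $(\bm{d}^\top\bm{x})_+=0$ and the bound reduces directly to $t\ge(\bm{d}^\top\bm{x})_-^2$. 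The parenthetical $\bm{d}\le\bm{0}$ case follows from the sign flip $\bm{d}\mapsto-\bm{d}$, which leaves $\mathcal{X}_{\bm{d}}^-$ unchanged and swaps the roles of the two summands in the description.

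First I would dispose of the unbounded and degenerate configurations. When $\gamma<0$, both problems are unbounded via $t\to\infty$; when $\gamma=0$, for any $i$ with $\alpha_i\neq 0$ a signed pump along $x_i$ with a compatible choice of $z_i\in\{0,1\}$ (specifically, $z_i=0$ if $\alpha_i>0$ and $x_i\to-\infty$, or $z_i=1$ if $\alpha_i<0$ and $x_i\to+\infty$) sends the objective to $-\infty$, so $\bm{\alpha}=\bm{0}$ and the residual problem is the linear program $\min\bm{\beta}^\top\bm{z}$ over $[0,1]^n$, which has an integer optimizer. I may therefore take $\gamma=1$. An analogous pump on indices $i\notin\supp(\bm{d})$ forces $\alpha_i=0$ off the support, and after decoupling these (trivial) coordinates and rescaling (using $\bm{d}\ge\bm{0}$) I may assume $\supp(\bm{d})=[n]$ and $\bm{d}=\bm{1}$. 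Finally, the pair-wise pump $z_i=z_j=1$, $x_i=\lambda$, $x_j=-\lambda$, with other components zero, preserves $\bm{d}^\top\bm{x}=0$ and $t=0$ while driving the objective to $-\infty$ whenever $\alpha_i\neq\alpha_j$, so $\bm{\alpha}=\eta\bm{1}$ for some $\eta\in\R$.

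With these reductions in hand and writing $y=\bm{1}^\top\bm{x}$, the problem becomes $\min_{\bm{z}\in[0,1]^n}f(\bm{z})$ where
\[
f(\bm{z})=\bm{\beta}^\top\bm{z}+\min_{y\in\R}\left\{\eta y+\frac{(y)_+^2}{\min\{1,\sum_{i\in [n]}z_i\}}+(y)_-^2\right\}.
\]
A one-variable calculation shows the inner minimum equals $-\eta^2/4$, attained at $y^\ast=-\eta/2\le 0$, whenever $\eta\ge 0$ (independent of $\bm{z}$), and equals $-\tfrac{\eta^2}{4}\min\{1,\sum_{i\in[n]}z_i\}$, attained at $y^\ast=-\tfrac{\eta}{2}\min\{1,\sum_{i\in[n]}z_i\}\ge 0$, whenever $\eta<0$. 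Consequently $\min f$ reduces either to a plain linear program over the box (when $\eta\ge 0$) or to the better of two linear programs obtained by restricting $\sum_{i\in[n]} z_i\le 1$ versus $\sum_{i\in[n]}z_i\ge 1$ (when $\eta<0$); in each subcase the constraint matrix is totally unimodular and the objective linear, so a binary optimizer $\bar{\bm{z}}$ exists. I would then lift back by taking $\bar{\bm{x}}=\bm{0}$, $\bar t=0$ when $\bar{\bm{z}}=\bm{0}$, and otherwise placing the mass $y^\ast$ on a single index $j$ with $\bar z_j=1$, setting $\bar x_i=0$ for $i\neq j$ and $\bar t=(y^\ast)^2$; the constraints $\bar x_i(1-\bar z_i)\le 0$ are then directly checked. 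The main departure from Proposition~\ref{propo:rank1} is the loss of the involutive symmetry $(\bm{x},\bm{z})\mapsto(-\bm{x},\bm{1}-\bm{z})$: here $(\bm{d}^\top\bm{x})_-^2$ is unweighted while $(\bm{d}^\top\bm{x})_+^2$ carries the $\min\{1,\sum z_i\}$ weight, so the cases $\eta\ge 0$ and $\eta<0$ behave differently and must be handled separately. Beyond this bookkeeping, the totally-unimodular LP argument transcribes verbatim from the two-sided case.
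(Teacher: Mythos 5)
Your argument follows the paper's proof of this statement essentially step for step: dispose of the unbounded/degenerate choices of $(\bm{\alpha},\bm{\beta},\gamma)$, reduce to $\gamma=1$, $\bm{d}=\bm{1}$, $\bm{\alpha}=\eta\bm{1}$ (your pair-wise pump with $z_i=z_j=1$ is just an explicit witness for the paper's observation that $\bm{\alpha}\notin\mathrm{span}(\bm{d})$ yields unboundedness), project onto $y=\bm{1}^\top\bm{x}$, compute the inner minimum separately for $\eta\ge 0$ and $\eta<0$, and invoke total unimodularity to get a binary optimizer $\bar{\bm{z}}$. All of that is correct, as is your direct validity check.

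There is, however, one genuine slip in the lifting step. Your recipe reads: take $\bar{\bm{x}}=\bm{0}$, $\bar t=0$ when $\bar{\bm{z}}=\bm{0}$, and otherwise place the mass $y^\ast$ on an index $j$ with $\bar z_j=1$. This is correct for $\eta<0$, where $y^\ast=-\tfrac{\eta}{2}\min\{1,\sum_i\bar z_i\}$ vanishes exactly when $\bar{\bm{z}}=\bm{0}$. But for $\eta>0$ the optimal $y^\ast=-\eta/2<0$ is independent of $\bar{\bm{z}}$, and the LP $\min_{\bm z\in[0,1]^n}\bm{\beta}^\top\bm{z}$ may well return $\bar{\bm{z}}=\bm{0}$ (e.g.\ if $\bm{\beta}>\bm{0}$); your recipe then produces the feasible point $(\bm{0},\bm{0},0)$ with objective $0$, which misses the relaxation's optimal value $-\eta^2/4$, so the claimed equality of optimal values is not established in that subcase. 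The fix is immediate and is what the paper does: when $y^\ast\le 0$ the mass can be placed on \emph{any} coordinate regardless of $\bar z_j$, since $\bar x_j<0$ satisfies $\bar x_j(1-\bar z_j)\le 0$ for either value of $\bar z_j$ (this is precisely the one-sidedness of $\mathcal{X}_{\bm{d}}^-$). So: for $\eta\ge 0$ always set $\bar x_1=-\eta/2$, and reserve the case split on $\bar{\bm{z}}=\bm{0}$ for $\eta<0$. With that correction your proof is complete and coincides with the paper's.
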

    \begin{proof}
We assume that $\bm{d}>\bm{0}$ and discuss the extension to the case $\bm{d}\geq \bm{0}$ later. Define 
    \begin{equation*} 
    \mathcal{\widehat X}^-_{\bm{d}}=\Cbracket{(\bm{x},\bm{z},t)\in\mathbb{R}^n\times[0,1]^n\times\mathbb{R}:t\geq\frac{(\bm{d}^\top \bm{x})_+^2}{\min\{1,\sum_{i\in [n]} z_i\}} + (\bm{d}^\top\bm{x})^2_-}.
    \end{equation*}
    We show that for any $\bm{\alpha},\bm{\beta}\in\mathbb{R}^n,\gamma\in\mathbb{R}$, the two problems
    \begin{align}\label{prob:coro-mip}
        \min\; \bm{\alpha}^\top \bm{x} + \bm{\beta}^\top \bm{z} +\gamma t \text{ s.t. } (\bm{x},\bm{z},t)\in \mathcal{X}^-_{\bm{d}}
    \end{align}
    and  
    \begin{align}\label{prob:coro-relax}
        \min\; \bm{\alpha}^\top \bm{x} + \bm{\beta}^\top \bm{z} +\gamma t \text{ s.t. } (\bm{x},\bm{z},t)\in \mathcal{\widehat X}^-_{\bm{d}}
    \end{align}
    share the same optimal objective value and integer optimal solutions.
    If $\gamma < 0$, or if $\gamma=0$ and $\alpha_i\neq 0$ for some $i$, then both \eqref{prob:coro-mip} and \eqref{prob:coro-relax} are unbounded, applying similar arguments as those in Proposition \ref{propo:rank1}. Also, if $\gamma = 0$ and $\bm{\alpha}=\bm{0}$, then any solution with $z_i=\mathbbm{1}_{\{\beta_i<0\}}$ is optimal for both problems. Hence, it suffices to consider the case where $\gamma>0$ and $\bm{\alpha}\neq \bm{0}$. In such a case, if there is no $\eta\in\mathbb{R}$ such that $\bm{\alpha}=\eta \bm{d}$, then we can always find a vector $\bm{x}\in\mathbb{R}^n$ such that $\bm{\alpha}^\top \bm{x}<0$ and $\bm{d}^\top \bm{x}=0$, which implies that both \eqref{prob:coro-mip} and \eqref{prob:coro-relax} are unbounded. Therefore, the only remaining case to consider is $\bm{\alpha}=\eta\bm{d}$ for some $\eta\in\mathbb{R}$.
    
    With a change of variables, we may assume $\bm{d}=\bm{1}$ and $\gamma=1$. Then 
    \eqref{prob:coro-relax} can be rewritten as
\begin{equation}\label{prob:coro-relax2}
        \min~ \eta \bm{1}^\top \bm{x} + \bm{\beta}^\top \bm{z} + t \text{ s.t. } (\bm{x},\bm{z},t)\in \mathcal{\widehat X}^-_{\bm{d}}.
    \end{equation}
    Let $f$ be defined by   
    \begin{eqnarray*}
        f(\bm{z}) & := & \bm{\beta}^\top \bm{z} + \min_{y=(y)_+ + (y)_-} \eta ((y)_+ + (y)_-) +  \frac{(y)_+^2}{\min\{1,\sum_{i=1}^n z_i\}} + {(y)_-^2},
    \end{eqnarray*}
    with a change of variables $y=\bm{1}^\top \bm{x}$. Then, \eqref{prob:coro-relax2} reduces to
   $\min_{\bm{z}\in[0,1]^n} f(\bm{z}).$
    
    If $\eta > 0$, then the minimization above attains an optimal solution $y^*=(y^*)_-= -\eta/2$, that is, $f(z)=\bm{\beta}^\top \bm{z}-\eta^2/4$. Hence, \eqref{prob:coro-relax2} reduces to the problem $\min_{\bm{z}\in [0,1]^n} \bm{\beta}^\top \bm{z}-\eta^2/4$, which always achieves an integer optimal solution, say $\bm{z}^*$. Setting $x^*_1=y^*=-\eta/2$, $x^*_i=0$ for all $i\neq 1$, notice that the mixed integer solution $(\bm{x}^*,\bm{z}^*,t^*)$ with $t^*:=(\bm{1}^\top\bm{x}^*)^2$ is optimal for \eqref{prob:coro-relax2}. Also, one can check that the solution $(\bm{x}^*,\bm{z}^*,t^*)$ is feasible for \eqref{prob:coro-mip}, which implies that the optimal values of \eqref{prob:coro-mip} and \eqref{prob:coro-relax2} coincide.
    
    If $\eta\leq 0$, one can verify that for each $\bm{z}$, the point $y^*(\bm{z})=-\frac{\eta}{2}\min\{1,\sum_{i=1}^n z_i\}$ solves the minimization problem that defines $f(\bm{z})$. Thus, $f(\bm{z})=\bm{\beta}^\top \bm{z} - \frac{\eta^2}{4}\sum_{i=1}^nz_i$. Also, the problem $\min_{\bm{z}\in[0,1]^n} f(\bm{z})$ attains an integer optimal solution $\bar{\bm{z}}$ as established in Proposition \ref{propo:rank1}. If $\bar{\bm{z}}=\bm{0}$, then we can set $\bar{\bm{x}}=\bm{0}$ and $t=0$, which is optimal for \eqref{prob:coro-mip}. If $\bar{z}_j=1$ for some $j$, we can set $\bar{x}_j=y^*(\bar{z})$ and $\bar{x}_i=0$ for $i\neq j$. By letting $\bar{t}= (\bm{d}^\top\bar{\bm{x}})^2$, the solution $(\bar{\bm{x}},\bar{\bm{z}},\bar{t})$ is optimal for problem (\ref{prob:coro-relax2}) and is also feasible for problem (\ref{prob:coro-mip}). That is, we have shown that \eqref{prob:coro-relax} and \eqref{prob:coro-mip} share the same optimal value.

    Finally, extending these results to the general case $\bm{d}\geq \bm{0}$ follows from arguments similar to those in Theorem \ref{theo:hullGen}. We omit the details for brevity.
\end{proof}


\section{Proof of Proposition \ref{propo:cop-sdp}}\label{sec:proofEquiv}
For any cone $\mathcal{C}$, we denote the dual cone of $\mathcal{C}$ by $\mathcal{C}^*=\{\bm X^*:\innerProd{\bm X}{\bm X^*}\ge0\;\forall \bm X\in\mathcal{C} \}$. Define $\mathcal{CP}^n$ as the cone of $n$-dimensional \emph{completely positive matrices}, that is,  $\mathcal{CP}=\conv\left\{\bm x^\top \bm x: \bm x\in\R_+^n \right\}.$
Note that $\left(\mathcal{C}_+^n\right)^*=\mathcal{CP}^n.$ Proposition~\ref{propo:cop-sdp} is a direct corollary of Proposition~\ref{prop:sdp-exactness}. 
\begin{proposition}\label{prop:sdp-exactness}
    Assume $\bm X\in\mathcal{S}_+^n$. Then $\begin{pmatrix}
        t&\bm x^\top\\
        \bm x& \bm X
    \end{pmatrix}\in\mathcal{C}_+^{n+1}$ if and only if $\begin{pmatrix}
        t&\bm y^\top\\
        \bm y& \bm X
    \end{pmatrix}\succeq 0$ for certain $\bm y\le \bm x$. 
\end{proposition}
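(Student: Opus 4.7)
The plan is to treat the two directions separately. The easy direction $(\Leftarrow)$ rests on a simple additive decomposition of the bordered matrix, while the hard direction $(\Rightarrow)$ follows from Lagrangian duality applied to an auxiliary convex quadratic program.

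For $(\Leftarrow)$, I would write
\[ \begin{pmatrix} t & \bm x^\top \\ \bm x & \bm X \end{pmatrix} = \begin{pmatrix} t & \bm y^\top \\ \bm y & \bm X \end{pmatrix} + \begin{pmatrix} 0 & (\bm x - \bm y)^\top \\ \bm x - \bm y & \bm 0 \end{pmatrix}. \]
The first summand is PSD by hypothesis, hence copositive; the second is entrywise nonnegative since $\bm y \le \bm x$, hence also copositive. Their sum is therefore copositive.

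For $(\Rightarrow)$, I would first translate the copositive condition into an inequality over nonnegative vectors. Unpacking the definition of $\mathcal{C}_+^{n+1}$ and using $\bm X \in \mathcal{S}_+^n$ to absorb the $d_0 = 0$ case, the hypothesis is equivalent to
\[ t \ge -\min_{\bm d \ge \bm 0} \left\{ 2 \bm x^\top \bm d + \bm d^\top \bm X \bm d \right\} =: -v^*. \]
The right-hand side is a convex quadratic program that is feasible and bounded below, so by the Frank--Wolfe theorem the minimum is attained at some $\bm d^\ast$, and strong Lagrangian duality produces multipliers $\bm \mu^\ast \ge \bm 0$ satisfying the KKT conditions $(\bm d^\ast)^\top \bm \mu^\ast = 0$ and $\bm x + \bm X \bm d^\ast = \bm \mu^\ast/2$. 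Define $\bm y := \bm x - \bm \mu^\ast/2 = -\bm X \bm d^\ast$; then immediately $\bm y \le \bm x$ and $\bm y \in \range(\bm X)$. Substituting $\bm X \bm d^\ast = -\bm y$ into the primal objective and using complementary slackness, a short calculation gives
\[ v^\ast \;=\; 2 \bm x^\top \bm d^\ast + (\bm d^\ast)^\top \bm X \bm d^\ast \;=\; -(\bm d^\ast)^\top \bm X \bm d^\ast \;=\; -\bm y^\top \bm X^\dagger \bm y. \]
Hence $t \ge \bm y^\top \bm X^\dagger \bm y$ with $\bm y \in \range(\bm X)$, which by the generalized Schur complement for positive semidefinite matrices is precisely $\begin{pmatrix} t & \bm y^\top \\ \bm y & \bm X \end{pmatrix} \succeq 0$.

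The main obstacle is handling the possibly singular $\bm X$: strong duality and attainment of the primal optimum cannot be taken for granted for general QPs, but the Frank--Wolfe theorem applies because the feasible region is polyhedral and the objective is a convex quadratic bounded below. The relation $\bm y = -\bm X \bm d^\ast$ then automatically places $\bm y$ in the range of $\bm X$, which is precisely the side condition needed to invoke the generalized Schur complement in the singular case.
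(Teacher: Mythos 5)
Your proof is correct, but it takes a genuinely different route from the paper. The paper argues at the level of cones: it sets $\mathcal{C}_L=\mathcal{C}_+^{n+1}\cap\mathcal{R}_L$ and $\mathcal{C}_R=\mathcal{S}_+^{n+1}+\mathcal{R}_R$, passes to the dual cones (bringing in the completely positive cone $\mathcal{CP}^{n+1}$), proves $\mathcal{C}_R^*\subseteq\mathcal{C}_L^*$ by splitting a PSD matrix with nonnegative border into a rank-one completely positive piece plus a PSD corner piece, and then recovers the primal statement by biduality, which requires the closedness and convexity of both cones (in particular, closedness of the Minkowski sum $\mathcal{S}_+^{n+1}+\mathcal{R}_R$ is asserted rather than proved). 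You instead work entirely on the primal side: your $(\Leftarrow)$ direction is the same additive decomposition the paper implicitly uses for the containment $\mathcal{C}_R\subseteq\mathcal{C}_L$, but your $(\Rightarrow)$ direction reduces copositivity to boundedness of the convex QP $\min_{\bm d\ge\bm 0}\{2\bm x^\top\bm d+\bm d^\top\bm X\bm d\}$, invokes Frank--Wolfe for attainment, and reads the certificate $\bm y=\bm x-\bm\mu^*/2=-\bm X\bm d^*$ off the KKT conditions, finishing with the generalized Schur complement. Your argument is constructive (it exhibits $\bm y$ explicitly), avoids the completely positive cone and the closedness subtlety entirely, and all the steps check out: the reduction to $t\ge -v^*$ correctly absorbs the $d_0=0$ case using $\bm X\succeq\bm 0$, the linear constraints make KKT necessary without further qualification, the identity $v^*=-(\bm d^*)^\top\bm X\bm d^*=-\bm y^\top\bm X^\dagger\bm y$ is right, and $\bm y\in\textup{range}(\bm X)$ is exactly what the generalized Schur complement needs for singular $\bm X$.
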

\begin{proof}
    Define $\mathcal{C}_L=\mathcal{C}_+^{n+1}\cap\mathcal{R}_L$ and $\mathcal{C}_R=\mathcal{S}^{n+1}_++\mathcal{R}_R$, where the sum of two sets is understood as Minkowski sum and 
    \[ \mathcal{R}_L=\left\{ \begin{pmatrix}
        t&\bm x^\top\\
        \bm x& \bm X
    \end{pmatrix}:\bm X\in\mathcal{S}_+^n  \right\} \text{ and }\mathcal{R}_R=\left\{ \begin{pmatrix}
        0&\bm x^\top\\
        \bm x& \bm 0
    \end{pmatrix}\in\R^{(n+1)\times(n+1)}:\bm x\in\R^n_+ \right\}. \]
    Then Proposition~\ref{prop:sdp-exactness} amounts to $\mathcal{C}_L=\mathcal{C}_R$ which is further equivalent to $\mathcal{C}_L^*=\mathcal{C}_R^*$ due to the closedness and convexity of $\mathcal{C}_L$ and $\mathcal{C}_R$. Since $\mathcal{C}_R\subseteq\mathcal{C}_L$, one has $\mathcal{C}_L^*\subseteq\mathcal{C}_R^*$. Thus, it suffices to prove $\mathcal{C}_R^*\subseteq\mathcal{C}_L^*$. 
    
    Note that  \begin{align*}
        \mathcal{C}_L^*=\left(\mathcal{C}_+^{n+1}\right)^*+\mathcal{R}_L^*=\mathcal{CP}^{n+1}+\mathcal{R}_L^*,
    \end{align*}
    and \begin{align}\label{eq:RHS-dual-cone}
        \mathcal{C}_R^*=\left(\mathcal{S}_+^{n+1}\right)^*\cap\mathcal{R}_R^*=\mathcal{S}_+^{n+1}\cap\mathcal{R}_R^*,
    \end{align}
    where 
    \[ \mathcal{R}_L^*=\left\{ \begin{pmatrix}
        0& \bm 0^\top \\
        \bm 0& \bm X
    \end{pmatrix}:\bm X\in\mathcal{S}_+^n  \right\} \text{ and }\mathcal{R}_R^*=  \left\{ \begin{pmatrix}
        t&\bm x^\top\\
        \bm x& \bm X
    \end{pmatrix}:\bm x\in\R_+^n  \right\}.\]   
    Taking an arbitrary $\hat{\bm X}= \begin{pmatrix}
        t&\bm x^\top\\
        \bm x& \bm X
    \end{pmatrix}\in\mathcal{C}_R^*$, one has $t\ge0$, $\hat{\bm{X}}\succeq0$ and $\bm x\ge0$ by \eqref{eq:RHS-dual-cone}. There are two cases:
    \begin{itemize}
        \item Case 1: $t=0$. In this case, one must have $\bm x=0$, implying $\hat{\bm X}\in\mathcal{R}_L^*\subseteq\mathcal{R}_L^*+\mathcal{CP}^{n+1}=\mathcal{C}_L^*.$
        \item Case 2: $t>0$. In this case, define $\hat{\bm X}=\underbrace{\begin{pmatrix}
            t&\bm x^\top\\
            \bm x& \frac{1}{t}\bm x\bm x^\top
        \end{pmatrix}}_{\hat{\bm X}^1}+\underbrace{\begin{pmatrix}
            0& \bm{0}^\top\\ \bm 0 &\bm X-\frac{1}{t}\bm x\bm x^\top
        \end{pmatrix}}_{\hat{\bm X}^2}$. One can see that $\hat{\bm{X}}^2\in\mathcal{R}_L^*$ and $\hat{\bm X}^1=\begin{pmatrix}
            \sqrt{t}\\\frac{\bm x}{\sqrt{t}}
        \end{pmatrix}\begin{pmatrix}
            \sqrt{t}\\\frac{\bm x}{\sqrt{t}}
        \end{pmatrix}^\top\in\mathcal{CP}^{n+1}$,  implying $\hat{\bm X}\in\mathcal{R}_L^*+\mathcal{CP}^{n+1}=\mathcal{C}_L^*.$
    \end{itemize}
    This shows $\mathcal{C}_R^*\subseteq\mathcal{C}_L^*$  and concludes the proof.
\end{proof}

\end{document}